\numberwithin{equation}{section} 
\newtheorem{theorem}[equation]{Theorem}
\theoremstyle{definition}
\newtheorem{definition}[equation]{Definition}
\theoremstyle{remark}
\newtheorem{note}[equation]{Remark}
\theoremstyle{plain}
\newtheorem{lemma}[equation]{Lemma}
\newtheorem{corollary}[equation]{Corollary}
\newtheorem{proposition}[equation]{Proposition}
\newcommand{\diag}{\operatorname{diag}}
\newcommand{\bq}{/\!\!/}
\newcommand{\re}{\operatorname{Re}}
\newcommand{\im}{\operatorname{Im}}
\newcommand{\Tr}{\operatorname{Tr}}
\title{Almost positively curved generalized Eschenburg spaces}
\author{Jason DeVito and Joan West}
\date{}
\begin{document}

\maketitle

\begin{abstract} In each dimension of the form $4n-1$ with $n\geq 3$, we construct infinitely many new examples of manifolds admitting metrics with positive sectional curvature almost everywhere.  In addition, we show that if $n\geq 6$, infinitely many of our examples are not homotopy equivalent to any homogeneous space, providing the first infinite family of such examples.
\end{abstract}

\section{Introduction}

 The study of non-negative and positive sectional curvature is essentially as old as Riemannian geometry itself.  Compact rank one symmetric spaces (CROSSes), i.e., spheres and the projective spaces over $\mathbb{C},\mathbb{H}$, and $\mathbb{O}$ are well-known examples of simply connected closed manifolds admitting metrics of positive sectional curvature.  But apart from these, examples are quite scarce.  In fact, in dimension above 24, all known examples are diffeomorphic to a CROSS.

By comparison, non-negatively curved examples are much more common, including all Riemannian homogeneous spaces and all cohomogeneity one manifolds with codimension two singular orbits \cite{GZ}, and their isometric quotients.

In this article, we are concerned with Riemannian manifolds interpolating between these two classes.  We recall that a Riemannian manifold is said to be \textit{quasi-positively curved} if it has non-negative sectional curvature and a point at which all two-planes are positively curved.  A Riemannian manifold is called \textit{almost positively curved} if the set of points at which all two-planes are positively curved is open and dense.  While examples of Riemannian manifolds of almost positive curvature are more abundant than examples with positive sectional curvature, most of them are obtained by deforming a homogeneous metric.  In fact, apart from finitely many low-dimensional examples \cite{longkerinpaper,EK,DeV2}, the only known inhomogeneous examples are due to Wilking \cite{Wi} and are diffeomorphic to biquotients of the form $\Delta SO(2) \backslash SO(2n+1)/SO(2n-1), \Delta SU(2)\backslash SO(4n+1)/SO(4n-1)$, and to $\Delta Sp(1)\backslash Sp(n+1)/(Sp(1)\cdot Sp(n-1)$.  In particular, in each fixed dimension, there are currently only finitely many known inhomogeneous examples.

Our first main result is the existence of infinitely many new strongly inhomogeneous almost positively curved examples in each dimension of the form $4n-1$ with $n\geq 6$.  

\begin{theorem}\label{thm:main} In each dimension of the form $4n-1$ with $n\geq 3$, there are infinitely many simply connected closed manifolds which admit a metric of almost positive sectional curvature.  In addition, if $n\geq 6$, infinitely many of them are strongly inhomogeneous.
\end{theorem}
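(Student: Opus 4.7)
The plan is to build explicit \emph{generalized Eschenburg spaces} -- biquotients of the form $M_{\mathbf{p},\mathbf{q}} := SU(n+1)\bq T^k$ where the torus acts by left and right multiplication through diagonal matrices with integer weight tuples $(\mathbf{p},\mathbf{q})$ -- and then equip each with a Wilking-style Cheeger deformation of the bi-invariant metric so that almost positive curvature holds. The argument splits cleanly into a topological part (producing the family and distinguishing homotopy types) and a geometric part (the curvature analysis), with the inhomogeneity statement following from comparison with the classification of homogeneous spaces.

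Topologically, the classical Eschenburg--Singhof criteria translate freeness of the action and simple-connectedness of the quotient into explicit $\gcd$-type arithmetic conditions on $(\mathbf{p},\mathbf{q})$, and it is routine to exhibit infinitely many integer tuples satisfying them and yielding smooth closed simply connected $(4n-1)$-manifolds. To distinguish them up to homotopy equivalence I would use homotopy-invariant data computable from the Borel fibration and the double-coset spectral sequence: the order and linking form of the middle-dimensional torsion cohomology, or equivalently a weighted characteristic number expressible directly in $(\mathbf{p},\mathbf{q})$. A mild generic condition on the weights forces these invariants to take infinitely many distinct values.

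For the curvature, I would Cheeger-deform the bi-invariant metric on $SU(n+1)$ along a subgroup $H\subset SU(n+1)\times SU(n+1)$ chosen so that (i) it commutes with the $T^k$-biquotient action and therefore descends to $M_{\mathbf{p},\mathbf{q}}$, and (ii) its adjoint action on $\mathfrak{su}(n+1)$ is rich enough that the Gray--O'Neill/Cheeger zero-curvature condition is satisfied only on a proper subvariety. By the standard formula, a zero-curvature plane at $[A]\in M_{\mathbf{p},\mathbf{q}}$ forces its horizontal lift to satisfy rigid bracket relations with $\mathfrak{h}$, and these relations cut out a real-analytic subset of $SU(n+1)$. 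The main obstacle I foresee is verifying that this subset projects to a nowhere dense subset of the biquotient rather than all of it; this requires an explicit bracket computation adapting Wilking's $\Delta SU(2)$-analysis to the present toric biquotient and, crucially, choosing $H$ correctly so that the rigidity is strong enough to be exceptional at a generic point.

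Finally, strong inhomogeneity for $n\geq 6$ is to be obtained by comparison with the classification of simply connected closed homogeneous manifolds in dimension $4n-1$. The possible cohomology rings and linking forms of such homogeneous spaces fall into finitely many families parameterized by Lie-theoretic data, and in high dimensions this restricts the values of the invariants computed above to a sparse arithmetic subset. Discarding a finite subset from the family of $M_{\mathbf{p},\mathbf{q}}$ then leaves infinitely many whose homotopy invariants cannot appear on any homogeneous space of the same dimension, which completes the proof.
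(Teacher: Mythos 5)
Your broad strategy matches the paper's at the conceptual level (Eschenburg-type biquotients, Wilking-style Cheeger deformations, topological invariants from the Borel fibration, comparison with homogeneous models), but there are two concrete problems.  First, the family you set up is not the right one: a torus biquotient $SU(n+1)\bq T^k$ has dimension $(n+1)^2-1-k$, which is never $4n-1$ for $n\geq 3$ with a low-rank torus.  The generalized Eschenburg spaces in dimension $4n-1$ are the quotients $U(n+1)\bq (U(n-1)\times S^1)$, i.e.\ free isometric $S^1$-quotients of $U(n+1)/U(n-1)$, with the $U(n-1)$ acting block-diagonally on the right and the $S^1$ weaving between the two sides with weights $(\vec p,\vec q)$.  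Only for $n=2$ does your $SU(3)\bq S^1$ picture recover the correct objects; for larger $n$ the $U(n-1)$ factor is essential and changes the whole geometry.  Until the construction is corrected, the dimension claim and the freeness/simple-connectedness arithmetic do not even apply.

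Second, and more seriously, the statement ``choosing $H$ correctly so that the rigidity is strong enough to be exceptional at a generic point'' names the entire difficulty without providing an approach.  The paper's way through is to restrict to the cohomogeneity-two subfamily $\vec p=(p,0,\dots,0)$, exhibit the slice $\mathcal F$ parametrized by two angles $(t,r)$ that meets every $(U(n)\times T^3)$-orbit exactly once, and then reduce the existence of open sets of zero-curvature planes to the sign of an explicit degree-$4$ polynomial $f_{p,q_1,q_2}(x,y)$ on $[0,1]^2$ with $x=\cos^2 r$, $y=\cos^2 t$; the remaining work is a careful case analysis of that polynomial (Propositions \ref{prop:easysummary}, \ref{prop:fpurpose}, and Section \ref{sec:polyanalysis}).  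Similarly, your inhomogeneity step waves at ``finitely many families parameterized by Lie-theoretic data''; the paper proves a sharper rigidity (Proposition \ref{prop:inequivalent}): in dimension at least $23$, any homogeneous space homotopy equivalent to an $\mathcal E_{p,q_1,q_2}$ must, up to finite cover, be $SU(n+1)/(SU(n-1)\times S^1)$.  Combined with a parity/mod-$4$ analysis of the order of $H^{2n}$ via Proposition \ref{prop:topology}, this excludes all homogeneous candidates.  None of these reductions appears in your outline, and without them the plan does not close.
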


We recall that a manifold is said to be \textit{strongly inhomogeneous} if it is not homotopy equivalent to any homogeneous manifold.  The manifolds of Theorem \ref{thm:main} are all obtained as free isometric circle quotients of the homogeneous space $U(n+1)/U(n-1)$, where $U(n+1)/U(n-1)$ is equipped with a particular homogeneous metric first constructed by Wilking \cite{Wi}.  Wilking used this metric to endow infinitely many of the homogeneous circle quotients with an almost positively curvature.  Later, Tapp \cite{Ta} showed Wilking's metric endowed all of the homogeneous circle quotients with quasi-positive curvature.  Kerin \cite{longkerinpaper} was the first to study the inhomogeneous circle quotients, where he used a different metric on $U(n+1)/U(n-1)$ to endow all of these quotients with a metric of quasi-positive curvature.  Each of these quotients is termed a generalized Eschenburg space, since one obtains the classical Eschenburg spaces in the special case that $n=2$.  Eschenburg spaces were first constructed by Eschenburg \cite{Eschenburg}, where he showed an infinite subfamily of them admit metrics of positive sectional curvature.  When $n=2$, Shankar \cite{Sh} has shown that infinitely many of them are strongly inhomogeneous as well.

In fact, Theorem \ref{thm:main} is a special case of our second main result which covers all generalized Eschenburg spaces whose natural isometry group acts via cohomogeneity two.  It turns out that such spaces are described by three integers $p,q_1,q_2$ satisfying $\gcd(q_1,q_2) = \gcd(p-q_1,q_2)=\gcd(q_1,p-q_2) = 1$, and that one may assume without loss of generality that either $q_2 > 0$ or that $q_2 = 0$ and $q_1 > 0$.

\begin{theorem}\label{thm:main2} For any allowable integers $(p,q_1,q_2)$ other than $(0,0,1)$, $(0, 1, 0)$, and $(1,1,0)$  the generalized Eschenburg space $\mathcal{E}^{4n-1}_{p,q_1,q_2}$ with $n\geq 2$ has almost positive curvature with respect to Wilking's metric if and only if one of the following occurs:

\begin{itemize} \item $0, p \in [\min\{q_1+q_2,q_2\}, \max\{q_1+q_2,q_2\}]$

\item  $p\geq q_1 + q_2 > 0$ and $q_1\geq 0$

\end{itemize}

In addition, for the exceptional triples, the Wilking metric is not almost positively curved.
\end{theorem}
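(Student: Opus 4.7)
The plan is to use O'Neill's submersion formula together with Wilking's explicit classification of zero-curvature planes on $(U(n+1)/U(n-1), g_{\text{Wilk}})$. Since the circle action defined by $(p,q_1,q_2)$ is free and isometric, the quotient $\mathcal{E}^{4n-1}_{p,q_1,q_2}$ inherits a submersion metric, and a plane $\sigma \subset T_{[g]}\mathcal{E}^{4n-1}_{p,q_1,q_2}$ has zero sectional curvature if and only if its horizontal lift $\tilde\sigma \subset \mathcal{H}_g$ is flat in $U(n+1)/U(n-1)$ and the O'Neill tensor $A$ vanishes on $\tilde\sigma$. Thus almost positive curvature of the quotient is equivalent to the $S^1$-invariant set
\[
\mathcal{B}_{p,q_1,q_2} = \{g \in U(n+1)/U(n-1) : \exists \text{ flat } \tilde\sigma \subset \mathcal{H}_g \text{ with } A|_{\tilde\sigma}=0\}
\]
having empty interior, whereas the negation is witnessed by $\mathcal{B}_{p,q_1,q_2}$ containing an open set.

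The first step is to recall Wilking's description of the flat two-planes at each tangent space of $U(n+1)/U(n-1)$: they are controlled by a canonical abelian subalgebra, and each flat plane contains a prescribed direction, so under the natural symmetry group the flats sweep out an explicit low-dimensional algebraic family in the tangent bundle. The second step is to write down the Killing field $K = K_{p,q_1,q_2}$ generated by the action and translate the conditions $\tilde\sigma \perp K$ and $\langle [\tilde\sigma,\tilde\sigma], K\rangle = 0$ (the latter via the identification of $A$ with the vertical projection of the Lie bracket) into bilinear equations in $(p,q_1,q_2)$ and in the moduli parametrising Wilking's flat planes at $g$.

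For the sufficiency direction, when $(p,q_1,q_2)$ lies in one of the two listed regions, I would show that this bilinear system has no simultaneous solution off a proper algebraic subvariety, so that $\mathcal{B}_{p,q_1,q_2}$ is nowhere dense; the inequalities in the theorem should emerge as the precise sign patterns preventing the necessary cancellations. For the necessity direction, given a non-exceptional triple outside both regions, I would construct smoothly varying one-parameter families of flat horizontal planes with vanishing $A$-tensor on open subsets of $U(n+1)/U(n-1)$, possibly leveraging a residual $T^2$-symmetry from the cohomogeneity-two action to halve the number of cases. For the three exceptional triples $(0,0,1), (0,1,0), (1,1,0)$, I expect $K$ to be tangent to, or maximally aligned with, Wilking's canonical flat, so that every point of $U(n+1)/U(n-1)$ supports a horizontal flat plane with vanishing $A$.

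The principal obstacle is the case analysis near the boundaries of the allowed regions (e.g.\ $p = q_1 + q_2$, $q_1 = 0$, $p = 0$), where the defining inequalities become marginal and both horizontality and vanishing-$A$ conditions degenerate simultaneously; verifying sharpness of the characterization, and confirming that the three exceptional triples truly exhaust the fully degenerate configurations, will require careful bookkeeping of the Killing field relative to Wilking's canonical flat, together with the symmetry normalization making $q_2 \geq 0$ meaningful.
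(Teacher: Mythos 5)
Your plan captures the right top-level idea---classify zero-curvature planes at the total-space level, then ask whether they sweep out a nowhere dense set---but it is an outline, not a proof, and it omits the machinery that makes the problem tractable. Working directly on $U(n+1)/U(n-1)$ with the O'Neill $A$-tensor would leave you facing a very high-dimensional constraint system; the paper instead works with Wilking's doubling picture $\Delta G\backslash(G\times G)/L$ (so that horizontal flats at $(B,I)$ are governed by the simple conditions of Proposition \ref{prop:Wcurvature}), invokes the freedom to pass between Wilking's and Kerin's metrics (Proposition \ref{prop:switchmetric}), and most importantly uses Kerin's explicit five-type classification of horizontal zero-curvature planes (Proposition \ref{prop:martin5.1}). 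Without that classification you have no finite list of candidate flats to test against the Killing field.

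The other indispensable reduction you did not identify is the cohomogeneity-two structure. Because $U(n)\times T^3$ acts on $\mathcal{E}_{p,q_1,q_2}$ with cohomogeneity two, every orbit meets a specific two-dimensional slice $\mathcal{F}$ parametrized by $(t,r)\in[0,\pi/2]^2$ (Proposition \ref{prop:cohom2}, Corollary \ref{cor:checkF}). This turns ``$\mathcal{B}_{p,q_1,q_2}$ has empty interior'' into a question about the zero set of a concrete two-variable polynomial. The heart of the paper is then the explicit degree-$4$ polynomial $f_{p,q_1,q_2}(x,y)$ of Section \ref{sec:poly}, together with Proposition \ref{prop:fpurpose} stating that almost positive curvature holds if and only if $f_{p,q_1,q_2}\le 0$ on $[0,1]^2$; the sign conditions in the theorem come from a careful boundary-and-critical-point analysis of $f$ (and of an auxiliary polynomial $g$) in Section \ref{sec:polyanalysis}, plus an appeal to the earlier Eschenburg-space result of DeVito--Johnson for part of the negative direction. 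Your phrase ``the inequalities should emerge as the precise sign patterns preventing cancellations'' is precisely where all the work lies, and as written your plan offers no route to producing or analyzing that polynomial.

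Finally, your heuristic for the exceptional triples (that the Killing field is ``tangent to, or maximally aligned with, Wilking's canonical flat'') is not how they behave: the paper exhibits open sets of zero-curvature planes of type (i) for $(0,0,1)$, of type (ii) for $(0,1,0)$, and of type (iv) with $W=0$ for $(1,1,0)$, each by explicit construction (Propositions \ref{prop:first3types} and \ref{prop:caseivsubcase1.2}). So while your general strategy is sound in spirit, it is missing every one of the structural steps---the doubling construction, the Wilking--Kerin equivalence, Kerin's type classification, the cohomogeneity-two slice, and the degree-$4$ polynomial---that the actual proof depends on.
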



The idea of the proof is as follows.  Apart from a few special cases, to each admissible triple $(p,q_1,q_2)$, we associate a polynomial $f_{p,q_1,q_2}:\mathbb{R}^2\rightarrow \mathbb{R}$, of degree $4$ in each variable, with the property that $\mathcal{E}_{p,q_1,q_2}$ is almost positively curved if and only if $f_{p,q_1,q_2} \leq 0$ on $[0,1]\times [0,1]$.  We then analyze these polynomials via a variety of tools.

In the special case where $q_1 = q_2$ (where admissibility implies $q_1 = q_2 =1)$, the corresponding generalized Eschenburg space actually admits a (non-isometric) cohomogeneity one action.  Wulle \cite{Wu} has shown that when $4n-1 > 7$, no generalized Eschenburg space admits a cohomogeneity one -invariant metric of quasi-positive curvature.

\begin{corollary}  In each dimension of the form $4n-1$ with $n\geq 3$, there are infinitely cohomogeneity one manifolds which do not admit a cohomogeneity one metric of quasi-positive curvature but do admit a cohomogeneity two metric of almost positive curvature.
\end{corollary}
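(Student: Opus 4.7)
The plan is to restrict to the subfamily of generalized Eschenburg spaces with $q_1 = q_2 = 1$ and show that infinitely many of its members simultaneously satisfy Wulle's obstruction and the positive clause of Theorem~\ref{thm:main2}. First I would observe that the triple $(p,1,1)$ is admissible for every integer $p$, since each of the three gcd conditions collapses to $\gcd(\cdot,1)=1$. As noted in the paragraph preceding the corollary, the resulting space $\mathcal{E}^{4n-1}_{p,1,1}$ admits a (non-isometric) cohomogeneity one action, so each such space is in particular a cohomogeneity one manifold.

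Next I would verify the two curvature-theoretic conditions. On the negative side, Wulle's theorem \cite{Wu} precludes any cohomogeneity one-invariant metric of quasi-positive curvature on any generalized Eschenburg space in dimensions $4n - 1 > 7$, and hence on every $\mathcal{E}^{4n-1}_{p,1,1}$ with $n \geq 3$. On the positive side, with $q_1 = q_2 = 1$ one has $q_1 + q_2 = 2$ and $q_1 \geq 0$, so the second clause of Theorem~\ref{thm:main2} is satisfied precisely when $p \geq 2$; in that range Wilking's metric provides the desired cohomogeneity two metric of almost positive curvature.

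The remaining — and really the only substantive — step is to show that as $p$ ranges over the integers $\geq 2$ the spaces $\mathcal{E}^{4n-1}_{p,1,1}$ realize infinitely many distinct homotopy (or homeomorphism) types, so that one genuinely obtains infinitely many \emph{manifolds} rather than infinitely many metrics on finitely many manifolds. For this I would use the standard biquotient description of $\mathcal{E}^{4n-1}_{p,q_1,q_2}$ to compute a topological invariant depending polynomially on the parameters — most naturally the order of a torsion subgroup of $H^*(\mathcal{E}^{4n-1}_{p,1,1};\mathbb{Z})$, or a Kreck--Stolz-type characteristic number analogous to those used to distinguish the classical Eschenburg spaces in the $n=2$ case — and verify that this invariant is a non-constant polynomial in $p$. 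Non-constancy immediately implies that only finitely many values of $p$ can produce any fixed homeomorphism type, and combining with the previous paragraph yields the infinite family required by the corollary. This distinctness step is the main obstacle; the curvature-theoretic content is captured entirely by the two cited theorems.
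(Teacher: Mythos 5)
Your proposal is correct and follows essentially the same route the paper takes: restrict to the cohomogeneity-one family $\mathcal{E}_{p,1,1}$ with $p \geq 2$, invoke Wulle's obstruction for the negative side and the second clause of Theorem~\ref{thm:main2} for the positive side, and separate homotopy types via a topological invariant. For the distinctness step that you flag as the substantive remaining obstacle, the paper's Proposition~\ref{prop:topology} does precisely what you propose: it computes $|H^{2n}(\mathcal{E}_{p,1,1})| = (p-1)n - 1$, a nonconstant polynomial in $p$, so the spaces fall into infinitely many homeomorphism types.
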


We also study free isometric $T^2$ actions on $U(n+1)/U(n-1)$, equipped with Wilking's metric.  Our main result here is the following.

\begin{theorem}\label{thm:even}  For each $n\geq 2$, there are at least three free isometric $T^2$-actions on $U(n+1)/U(n-1)$.  All three of the resulting quotient spaces inherit almost positively curved metrics.

\end{theorem}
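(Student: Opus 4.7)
The strategy is to reduce the almost positive curvature of each $T^2$-quotient to the circle case handled by Theorem \ref{thm:main2}. Write $M = U(n+1)/U(n-1)$ equipped with Wilking's metric $g_W$. Any free isometric $T^2$-action factors as a pair of commuting circles $T^2 = S^1_a \oplus S^1_b$, and the quotient can be taken in two stages: $M/T^2 = (M/S^1_a)/S^1_b$. If $S^1_a$ is chosen so that the intermediate quotient $M/S^1_a$ is a generalized Eschenburg space covered by Theorem \ref{thm:main2}, then $M/S^1_a$ is almost positively curved, and the induced free isometric action of $S^1_b$ on $M/S^1_a$ yields, via O'Neill's submersion formula, a further quotient with the same property: horizontal lifts of $2$-planes at any positively curved point remain positively curved, and the image of the open dense positively curved set stays open and dense under the Riemannian submersion.

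To exhibit three free $T^2$-actions, I would work inside the natural maximal torus of the left-right symmetry group preserving $g_W$. The circle actions corresponding to cohomogeneity-two generalized Eschenburg spaces in Theorem \ref{thm:main2} are parameterized by integer weight triples $(p, q_1, q_2)$, and an infinite family of almost positively curved examples already exists, for instance the triples of the form $(k, 1, 1)$ with $k \geq 2$, which satisfy all three gcd conditions and the second bulleted condition $p \geq q_1 + q_2 > 0$, $q_1 \geq 0$. I would pick three such circles $S^1_{a_1}, S^1_{a_2}, S^1_{a_3}$ and, for each, adjoin an auxiliary circle $S^1_{b_i}$ of independent weight data to form a $2$-torus $T^2_i \supset S^1_{a_i}$. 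Alternately, one can fix a single almost positive $S^1_a$ and extend it in three different ways, which also produces three distinct $T^2$-subgroups and distinct quotient spaces.

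Freeness of each $T^2_i$-action on $M$ is equivalent to freeness of the corresponding $T^2_i \times U(n-1)$-action on $U(n+1)$ by left and right multiplication. This translates into a concrete gcd condition on the integer matrix formed by the weights of $S^1_{a_i}$, $S^1_{b_i}$, and the block embedding of $U(n-1)$, and is checked by a direct computation. Combined with the reduction in the first paragraph, once freeness of $T^2_i$ is established and the almost-positive-curvature condition from Theorem \ref{thm:main2} is met by $S^1_{a_i}$, it follows that $M/T^2_i$ is almost positively curved.

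The main obstacle is ensuring that the auxiliary circles $S^1_{b_i}$ can be chosen to make the three $T^2_i$-actions simultaneously free, uniformly for all $n \geq 2$, and so that the three resulting subtori are manifestly non-conjugate in the isometry group. For large $n$ there are many available auxiliary circles and the unimodularity conditions are easy to satisfy; the delicate case is $n = 2$, where $M$ is only eight-dimensional and the list of admissible triples from Theorem \ref{thm:main2} is most restrictive, so the three extensions must be verified by explicit hand calculation. Once these discrete number-theoretic checks are complete, the almost positive curvature conclusion is automatic from the O'Neill reduction.
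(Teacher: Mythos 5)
Your high-level mechanism --- take the intermediate quotient by a subcircle $S^1_a\subset T^2$ to get a generalized Eschenburg space, then invoke O'Neill's formula for the remaining circle --- is exactly what the paper does for two of the three $T^2$-actions. However, the rest of the proposal has a genuine gap, and the heuristics you use to reassure yourself are in fact false.

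The problem is that the set of free $T^2$-actions is far more rigid than you assume, and crucially it does not depend on $n$. The freeness condition, recorded in the paper as Lemma \ref{lem:freeT2action}, reads
\[
|q_1 s_2 - q_2 s_1| = |(q_1-p)s_2 - q_2 s_1| = |q_1 s_2 - (q_2-p)s_1| = 1,
\]
with no $n$ anywhere. Taking differences forces $p s_1, p s_2 \in \{0,\pm 2\}$, which together with $\gcd(s_1,s_2)=1$ pins down $(p,q_1,q_2,s_1,s_2)$ almost completely. In particular your proposed family $(p,q_1,q_2)=(k,1,1)$, $k\ge 2$, admits \emph{no} free $T^2$-extension at all for $k\ge 3$: the three conditions become $|s_2-s_1|=|(s_2-s_1)-ks_2|=|(s_2-s_1)+ks_1|=1$, which forces $ks_1, ks_2\in\{0,\pm 2\}$, hence $s_1=s_2=0$ when $k\ge 3$, contradicting $\gcd(s_1,s_2)=1$. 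So "pick three different circles of weight $(k,1,1)$ and extend each" cannot produce three $T^2$-actions. Likewise, "for large $n$ there are many available auxiliary circles" and "the delicate case is $n=2$" are both incorrect --- the count is the same for every $n$. Indeed Proposition \ref{prop:t2} of the paper shows there are exactly two free $T^2$-actions with $p\neq 0$ up to equivalence (given by $(2,0,1,1,0)$ and $(2,0,1,-1,1)$), and the third is the homogeneous one with $p=0$ (the projectivized tangent bundle of $\mathbb{C}P^n$), which the paper disposes of by citing Wilking directly rather than by the intermediate-circle reduction. Your proposal never exhibits three actions, never verifies freeness, and does not address the homogeneous case; the classification that you defer as a "discrete number-theoretic check" is in fact the substance of the proof and requires the explicit analysis in Lemma \ref{lem:freeT2action} and Proposition \ref{prop:t2}.
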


Wilking has already proved this for one of the three quotients: the projectivized tangent bundle to $\mathbb{C}P^n$.

We now outline the structure of this article.  In Section \ref{sec:back}, we cover the relevant background on generalized Eschenburg spaces.  In Section \ref{sec:cohom}, we show that the generalized Eschenburg spaces we consider admit cohomogeneity two actions, and we determine a suitable two-dimensional subset intersecting each orbit.  In Section \ref{sec:easycurv}, we begin the analysis of zero-curvature planes, showing that for most ``types", they can only occur on nowhere dense subsets.  Section \ref{sec:poly} is devoted to constructing the two-variable, degree four polynomial mentioned above.  Then, in Section \ref{sec:polyanalysis}, we analyze this polynomial and complete the proof of Theorem \ref{thm:main2}.  Section \ref{sec:t2} contains the proof of Theorem \ref{thm:even}.  In Section \ref{sec:quasi}, we use a proof provided by Kerin to fill a small gap in Kerin's result \cite[Theorem B]{longkerinpaper} that every generalized Eschenburg space admits a quasi-positively curved metric.  Finally, in Section \ref{sec:top}, we show that infinitely many of the almost positively curved generalized Eschenburg spaces are strongly inhomogeneous.

Both authors gratefully acknowledge support from NSF DMS 2405266.  They also want to thank Martin Kerin for helpful conversations, as well as suggesting the proof of Proposition \ref{prop:fix}.

\section{Background}\label{sec:back}

\subsection{The construction of generalized Eschenburg spaces}\label{sec:back1}

In this section, we collect background regarding generalized Eschenburg spaces and the metrics we consider on them.

Let $G = U(n+1)$ and suppose $\vec{p} = (p_1,...,p_{n+1})\in \mathbb{Z}^{n+1}$ and $\vec{q} = (q_1,q_2)\in \mathbb{Z}^2$ with $\gcd(p_1,p_2,...,p_{n+1},q_1,q_2) = 1$.  The group $L = U(n-1)\times S^1$ then acts on $G$ as $(C,z)\bullet B= \diag(z^{p_1},...,z^{p_{n+1}}) B \diag(z^{q_1},z^{q_2}, C)^{-1})$.  As is well known, this action is free if and only if  \begin{equation}\label{eqn:gcd} \gcd(p_i - q_1, p_j - q_2) = 1 \text{ for all } i\neq j \in \{1,...,n+1\}.\end{equation}

By precomposing with the map $z\mapsto \overline{z}$, we obtain an equivalent action with $q_2$ replaced by $-q_2$.  As such, we can and will restrict our attention to actions for which $q_2\geq 0$.  In addition, if $q_2 = 0$, we can similarly assume that $q_1 > 0$.

\begin{definition}\label{def:admis} A collection of integers $p_1,...,p_{n+1}, q_1,q_2$ is called \textit{admissible} if \eqref{eqn:gcd} holds and either $q_2 > 0$ or both $q_2 = 0$ and $q_1 > 0$.
\end{definition}

When the action is free, the quotient space $G\bq L$ is denoted $\mathcal{E}_{\vec{p},\vec{q}}$ and is a manifold, called a generalized Eschenburg space.  In the special case where $n=2$, the resulting biquotients are the classical Eschenburg spaces.  Eschenburg spaces were introduced by Eschenburg in \cite{Eschenburg}, where it was shown that an infinite subfamily of them admits a metric of positive sectional curvature. 
Later, Kerin \cite[Theorem A(i)]{shortkerinpaper} showed that Eschenburg's metric can be used to endow all $7$-dimensional Eschenburg spaces (except one) with a metric of quasi-positive curvature---that is, a non-negatively curved metric for which at least one point has positive curvature.  The exceptional example was shown by Wilking \cite[Theorem 1c)]{Wi}  to admit a different metric of almost positive curvature---that is, it is positively curved on an open and dense set.  Later, the first author and Johnson \cite[Theorem 1.2]{DJ} characterized the curvature (strictly positive, almost positive, quasi-positive) of each Eschenburg space with respect to Eschenburg's metric.

Generalized Eschenburg spaces have also received much attention.  Wilking \cite[Theorem 1c)]{Wi} showed that in the special case that $\vec{p} = \vec{0}$ and $q_1 q_2 < 0$, $\mathcal{E}_{\vec{p},\vec{q}}$ admits a metric of almost positive sectional curvature.  More precisely, he showed that after possibly switching $q_1$ and $q_2$ (which yield diffeomorphic but non-isometric spaces), that his metric on $U(n+1)/U(n-1)$ induces almost positive curvature on $\mathcal{E}_{\vec{0},\vec{q}}$.  Tapp \cite[Theorem 1.1 (2)]{Ta} showed that under the weaker assumption that $\vec{p} = \vec{0}$ and $(q_1,q_2)\neq (0,0)$ one can similarly obtain the weaker conclusion that $\mathcal{E}_{\vec{0},\vec{q}}$ is quasi-positively curved.  Kerin \cite[Theorem B]{longkerinpaper} then generalized Tapp's result: by modifying the metric Tapp and Wilking used, Kerin endowed all $\mathcal{E}_{\vec{p},\vec{q}}$ with a quasi-positively curved metric.  In the following section, we describe properties of these metrics on much more detail.

\subsection{Metrics on generalized Eschenburg spaces}\label{sec:eschmetric}  In this section, we will give the details of both Wilking and Kerin's metric constructions, beginning with Wilking's. His metric is obtained as follows.  First, one equips $G\times G$ with a product metric, where the metric on each factor is a Cheeger deformation of a bi-invariant metric in the direction of $K = U(1)\cdot U(n)\subseteq G$.  Viewing $L\subseteq G\times G$ as $$L = \{(\diag(z^{p_1},..., z^{p_2}), \diag(z^{q_1}, z^{q_2}, C)): z\in S^1, C\in U(n-1)\},$$ this metric is invariant under the free action of $G\times L$ defined by $$(g,(\ell_1,\ell_2))\cdot (g_1,g_2) = (gg_1 \ell_1^{-1}, gg_2 \ell_2^{-1}).$$

In particular, it induces a metric on the quotient $\Delta G\backslash (G\times G)/L$.  This quotient is diffeomorphic to $G\bq L$ via  diffeomorphism induced via the map $\mu:G\times G\rightarrow G$ with $\mu(g_1,g_2) = g_1^{-1} g_2$, so one obtains a Riemannian metric on each generalized Eschenburg space.  We refer to this metric as Wilking's metric.

Metrics obtained via this kind of construction have a relatively simple characterization of points with zero-curvature planes.  To state it in our context, we first set up notation.  Let $\mathfrak{g}$ and $\mathfrak{l}$ denote the Lie algebras of $G$ and $L$ respectively.  Write $\mathfrak{g} = \mathfrak{k}\oplus \mathfrak{p}$, where $\mathfrak{k}$ is the Lie algebra of $K$ and $\mathfrak{p}$ is an orthogonal complement to $\mathfrak{k}$ with respect to the $Ad_G$-invariant inner product $\langle X,Y\rangle_0 = -\Tr(XY)$ on $\mathfrak{g}$.  Finally, let $\phi_1:\mathfrak{g}\rightarrow\mathfrak{g}$ be the linear map defined by $\phi_1(X) = \lambda_1X_{\mathfrak{k}}+X_{\mathfrak{p}}$ where $\lambda_1 = \frac{t_1}{t_1+1}$, $t_1\in (0,\infty)$ is a fixed parameter, and $X = X_{\mathfrak{k}}+X_{\mathfrak{p}}$ with $X_{\mathfrak{k}}\in \mathfrak{k}$ and $X_{\mathfrak{p}}\in \mathfrak{p}$.

\begin{proposition}\label{prop:Wcurvature}  With respect to Wilking's metric, there is a zero-curvature plane at $[(B,I)]\in \Delta G \backslash(G\times G)/L\cong \mathcal{E}_{\vec{p},\vec{q}}$ if and only if there are linearly independent vectors $X,Y\in \mathfrak{g}$ satisfying all of the following properties:

\begin{enumerate}\item  $\langle X, Ad_B L_1 - L_2\rangle_0 = \langle Y, Ad_B L_1-L_2\rangle_0$ for all $(L_1,L_2)\in \mathfrak{l}\subseteq \mathfrak{g}\oplus \mathfrak{g}$.

\item $[X,Y]= 0$

\item  $\{X_{\mathfrak{p}}, Y_{\mathfrak{p}}\}$ is linearly dependent.

\item  $\{ (Ad_{B^{-1}}(X))_{\mathfrak{p}}, (Ad_{B^{-1}}(Y))_{\mathfrak{p}} \}$ is linearly dependent.

\end{enumerate}

\end{proposition}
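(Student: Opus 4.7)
The plan is to realize Wilking's metric on $\mathcal{E}_{\vec p,\vec q}$ as the base of two successive Riemannian submersions with nonnegative O'Neill correction. First, the Cheeger-deformed product metric $g_{\mathrm{Ch}}$ on $G\times G$ is the submersion image of a bi-invariant metric on $(G\times G)\times(K\times K)$. Second, the free isometric $(\Delta G\times L)$-action descends $g_{\mathrm{Ch}}$ to the metric on $\mathcal{E}_{\vec p,\vec q}$. By O'Neill's formula, zero sectional curvature of a plane in $\mathcal{E}_{\vec p,\vec q}$ at $[(B,I)]$ is equivalent to zero sectional curvature of a horizontal lift in $T_{(B,I)}(G\times G)$ with respect to $g_{\mathrm{Ch}}$, together with the vanishing of the associated $A$-tensor; the former, in turn, reduces via the Cheeger submersion to a horizontal zero-curvature plane upstairs in the bi-invariant total space.

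I would then invoke the standard Cheeger-deformation zero-curvature lemma: on $G$ with $\mathfrak g=\mathfrak k\oplus\mathfrak p$ and the bi-invariant metric Cheeger-deformed in direction $K$, two left-invariant vectors $U,V\in\mathfrak g$ span a zero-curvature plane at a point iff $[U,V]=0$ and $\{U_{\mathfrak p},V_{\mathfrak p}\}$ is linearly dependent. Parameterizing the $\Delta G$-horizontal distribution at $(B,I)$ in left-trivialization by $X\mapsto \iota(X):=(-Ad_{B^{-1}}X,X)$ and applying the Cheeger lemma factor-by-factor: the second factor based at $I$ yields $[X,Y]=0$ together with the linear dependence of $\{X_{\mathfrak p},Y_{\mathfrak p}\}$, i.e., conditions (2) and (3); the first factor based at $B$, after left-translating $-Ad_{B^{-1}}X$ to the identity, yields condition (4), while the corresponding first-factor bracket condition is already assured by (2) and the $Ad$-invariance of the Lie bracket.

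Finally, condition (1) encodes $L$-horizontality of the lifted plane modulo common $L$-vertical shifts that do not alter the projected plane in $\mathcal{E}_{\vec p,\vec q}$. Since $\mathfrak l\subseteq \mathfrak k\oplus\mathfrak k$, the Cheeger inner product on $\mathfrak l$ coincides, up to the factor $\lambda_1$, with the bi-invariant one; thus $L$-horizontality of $\iota(W)$ reduces to $\langle W, Ad_B L_1 - L_2\rangle_0 = 0$ for all $(L_1,L_2)\in\mathfrak l$. Applied to $W=X-Y$, this is exactly condition (1). The main obstacle I anticipate is the careful bookkeeping needed to confirm that, under (1)--(4), the $A$-tensor contribution to the second O'Neill formula genuinely vanishes, so the conditions are sufficient and not merely necessary; this should follow because condition (2) forces $[\iota(X),\iota(Y)]=0$ in $\mathfrak g\oplus\mathfrak g$, which in particular kills the vertical part of the bracket entering the $A$-tensor. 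The converse direction---producing such $X,Y$ from a given zero-curvature plane---then follows by running the submersion argument in reverse.
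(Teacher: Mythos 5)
Your proposal follows essentially the same scaffold as the paper (two submersion steps, Cheeger zero-curvature lemma applied factor-by-factor, condition (1) as $L$-horizontality modulo the $\Delta G$ freedom), and the reduction of the four conditions from the two factors is on target. But there is a genuine gap in the sufficiency direction, i.e.\ in showing that conditions (1)--(4) actually produce a zero-curvature plane \emph{downstairs}, not merely a horizontal zero-curvature plane in $G\times G$.

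You correctly identify this as the anticipated obstacle, but the argument you sketch does not close it. Your claim is that $[X,Y]=0$ forces $[\iota(X),\iota(Y)]=0$ and hence kills the $A$-tensor of the submersion $G\times G\to\mathcal{E}_{\vec p,\vec q}$. Two problems arise. First, your parameterization $\iota(X)=(-Ad_{B^{-1}}X,X)$ of the $\Delta G$-horizontal distribution is not correct for the Cheeger-deformed metric: the horizontal lift is $\widehat X=(-\phi_1^{-1}(Ad_{B^{-1}}X),\phi_1^{-1}(X))$, and the $\phi_1^{-1}$ does not commute with the Lie bracket. Second, and more importantly, the identity $A_{\tilde X}\tilde Y=\tfrac12[\tilde X,\tilde Y]^v$ requires $\tilde X,\tilde Y$ to be \emph{horizontal extensions} of the given vectors, whereas your left-invariant extensions need not stay horizontal for a Cheeger-deformed product metric; so the Lie-algebra bracket $[\iota(X),\iota(Y)]$ is not the bracket that enters O'Neill's formula. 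This is exactly the point that the paper avoids by invoking Tapp's theorem \cite{Ta2}, which asserts that for these particular submersion metrics a horizontal zero-curvature plane always projects to a zero-curvature plane. Without citing Tapp (or reproving it), your proof of sufficiency is incomplete. The necessity direction, via O'Neill monotonicity and the Cheeger lemma, is fine, as is your observation that linear dependence of the $\mathfrak p$-components is equivalent to vanishing of their bracket because $G/K=\mathbb{C}P^n$ is positively curved.
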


\begin{proof}Suppose initially that each of these conditions is satisfied.  Because $X$ and $Y$ satisfy the first equation, via \cite[Equation (9), pg. 1006]{longkerinpaper}, the span of the vectors $\widehat{X} = (-\phi_1^{-1}(Ad_{B}^{-1} X), \phi_1^{-1}(X))$ and $\widehat{Y} = (-\phi_1^{-1}(Ad_{B}^{-1} Y), \phi_1^{-1}(Y)) \in \mathfrak{g}\oplus \mathfrak{g}$ lefts translates to a horizontal plane $\widehat{\sigma}$ at $(B,I)\in G\times G$.  We claim that this plane has zero-curvature.  Believing this, it follows from Tapp's theorem \cite{Ta2} that the projection of this plane to $[(B,I)]\in \Delta G\backslash(G\times G)/L$ has zero-curvature.

Because the metric on $G\times G$ is a product of non-negatively curved metrics, to verify $\widehat{\sigma}$ has zero-curvature, it is sufficient to verify that the projection of $\widehat{\sigma}$ to each factor of $G\times G$ has zero-curvature.  The projection to the first factor is $\widehat{\sigma}_1 = \operatorname{span} \{ \phi_1^{-1} Ad_{B^{-1}}X, \phi_1^{-1} Ad_{B^{-1}} Y\}$ and the projection to the second factor is $\widehat{\sigma}_2 =\operatorname{span}\{ \phi_1^{-1} X, \phi_1^{-1}Y\}$.

We focus on $\widehat{\sigma}_2$ first.  From, e.g., \cite[Example (a), pg. 1005]{longkerinpaper}, we see that since $(G,K)$ is a symmetric pair, that $\widehat{\sigma}_2$ has zero-curvature if and only if $[X,Y] = [X_{\mathfrak{k}}, Y_{\mathfrak{k}}] = [X_{\mathfrak{p}},Y_{\mathfrak{p}}] = 0$.  By assumption, $[X,Y] = [X_{\mathfrak{p}}, Y_{\mathfrak{p}}] = 0$, so we only need to verify that $[X_{\mathfrak{k}},Y_{\mathfrak{k}}] = 0$.  However, since $[\mathfrak{k},\mathfrak{p}]\subseteq \mathfrak{p}$ and $[X_{\mathfrak{p}},Y_{\mathfrak{p}}] =0$, it follows that the $\mathfrak{k}$-component of the equation $0 = [X,Y]$ is $[X_{\mathfrak{k}},Y_{\mathfrak{k}}]$, so this must be zero as well.  Thus, $\widehat{\sigma}_2$ has zero-curvature.

Turning to $\widehat{\sigma}_1$, \cite[Example (a), pg. 1005]{longkerinpaper} implies that $\widehat{\sigma}_1$ has zero-curvature if and only if $[Ad_{B^{-1}}X, Ad_{B^{-1}}Y] = [(Ad_{B^{-1}}X)_{\mathfrak{k}}, (Ad_{B^{-1}}Y)_{\mathfrak{k}}] = [(Ad_{B^{-1}}X)_{\mathfrak{p}}, (Ad_{B^{-1}}Y)_{\mathfrak{p}}] = 0$.  The fact that $[X,Y]=0$ and that $Ad_{B^{-1}}$ is a Lie algebra isomorphism implies that $[Ad_{B^{-1}}X, Ad_{B^{-1}}Y] = 0$.  Further, $[(Ad_{B^{-1}}X)_{\mathfrak{p}}, (Ad_{B^{-1}}Y)_{\mathfrak{p}}]=0$ by assumption, and then the fact that $[(Ad_{B^{-1}}X)_{\mathfrak{k}}, (Ad_{B^{-1}}Y)_{\mathfrak{k}}] = 0$ now follows as it did in the case of $\widehat{\sigma}_2$.

Thus, $\widehat{\sigma}$ has zero-sectional curvature, so there is a zero-curvature plane at $[(B,I)]$.

\bigskip

Conversely, assume there is a zero-curvature plane at $[(B,I)]$.  Then from the Gray-O'Neill formulas for a Riemannian submersion \cite{Gr,On1} there must be a horizontal zero-curvature plane at $(B,I)$.   From \cite[Equation (9), pg. 1006]{longkerinpaper}, left translating this plane the identity yields a horizontal zero-curvature plane of the form $\widehat{\sigma}=\operatorname{span} \{\widehat{X},\widehat{Y}\}$ where $X$ and $Y$ satisfy the first condition of this proposition.

Since $\widehat{\sigma}$ has zero-curvature, its two projections $\widehat{\sigma}_i$ must have zero-curvature.  From \cite[Example (a) pg. 1005]{longkerinpaper}, we find that $[X,Y] = 0$ and that both $[X_{\mathfrak{p}}, Y_{\mathfrak{p}}] = 0$ and $[(Ad_{B^{-1}} X)_{\mathfrak{p}}, (Ad_{B^{-1}} Y)_{\mathfrak{p}}] = 0$.  However, since the $\mathfrak{p}$-components can be interpreted as tangent vectors in the positively-curved $G/K = \mathbb{C}P^{n}$, their bracket vanishes if and only if they are linearly dependent.

\end{proof}

In \cite{longkerinpaper}, Kerin uses a slightly different metric.  Specifically, he endows $G\times G$ with a product metric where the metric on the first factor is a Cheeger deformation in the direction of $K$, but the metric on the second factor is a double Cheeger deformation---first in the direction of $K$ and then in the direction of $H:= U(1)\times U(1)\times U(n-1)$.  We let $\Psi:\mathfrak{g}\rightarrow \mathfrak{g}$ be defined by $\Psi(X) = \lambda_2 X_{\mathfrak{h}} + X_{\mathfrak{m}}$  where $\lambda_2 = \frac{t_2}{t_2+1}$, $t_2\in (0,\infty)$ is a fixed parameter, and $\mathfrak{g} = \mathfrak{h}\oplus \mathfrak{m}$ is orthogonal decomposition with respect to $\langle \cdot, \cdot \rangle_0$.  We similarly let $\phi_2(X) = t_1t_2 X_{\mathfrak{h}} + t_1 X_{\mathfrak{m}} + X_{\mathfrak{p}}$, so that $\Psi = \phi_1^{-1} \phi_2$.

We again have a good characterization of points with zero-curvature planes with respect to Kerin's metric.  The proof is analogous to the proof of Proposition \ref{prop:Wcurvature}, except that one uses $\widehat{X} = (-\phi_1^{-1} (Ad_{B^{-1}}(\phi_1(X))), \Psi^{-1} X)$ and  \cite[Example (b), pg. 1005]{longkerinpaper}.

\begin{proposition}\label{prop:Kcurvature} With respect to Kerin's metric, there is a zero-curvature plane at $[(B,I)]\in \Delta G \backslash(G\times G)/L\cong \mathcal{E}_{\vec{p},\vec{q}}$ if and only if there are linearly independent vectors $X,Y\in \mathfrak{g}$ satisfying all of the following properties:

\begin{enumerate}\item  $\langle \phi_1(X), Ad_B L_1 - L_2\rangle_0 = \langle \phi_1(Y), Ad_B L_1-L_2\rangle_0$ for all $(L_1,L_2)\in \mathfrak{l}\subseteq \mathfrak{g}\oplus \mathfrak{g}$.

\item $[X,Y]= 0$

\item  $\{X_{\mathfrak{p}}, Y_{\mathfrak{p}} \}$ is linearly dependent.

\item  $\{ (Ad_{B^{-1}}\phi_1(X))_{\mathfrak{p}}, (Ad_{B^{-1}}\phi_1(Y))_{\mathfrak{p}}\}$ is linearly dependent.

\item  $[X_{\mathfrak{m}},Y_{\mathfrak{m}}] = [X_{\mathfrak{h}}, Y_{\mathfrak{h}}] = 0$

\end{enumerate}

\end{proposition}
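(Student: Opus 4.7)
The plan is to mirror the proof of Proposition \ref{prop:Wcurvature} almost verbatim, adjusted for the two changes in metric construction. First, because the second factor of $G\times G$ now carries a doubly Cheeger-deformed metric, the horizontal lift at the identity takes the form $\widehat{X} = (-\phi_1^{-1}(Ad_{B^{-1}}(\phi_1(X))), \Psi^{-1}X)$ (and similarly for $\widehat{Y}$), and this span left-translates to a horizontal plane $\widehat{\sigma}$ at $(B,I)\in G\times G$ precisely when condition (1) holds, via the analog of \cite[Equation (9), pg. 1006]{longkerinpaper}. Second, the zero-curvature criterion governing the projection $\widehat{\sigma}_2$ to the second factor is now \cite[Example (b), pg. 1005]{longkerinpaper}, which addresses a Cheeger deformation in direction $K$ followed by one in direction $H\subseteq K$.

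For the forward direction, I would assume (1)--(5) and verify that $\widehat{\sigma}$ is horizontal at $(B,I)$ and has zero curvature. Since the metric on $G\times G$ is a product of non-negatively curved metrics, this reduces to checking flatness of each projection $\widehat{\sigma}_i$, after which Tapp's theorem \cite{Ta2} descends flatness to $[(B,I)]$. The first projection $\widehat{\sigma}_1 = \spa\{\phi_1^{-1}Ad_{B^{-1}}\phi_1(X),\,\phi_1^{-1}Ad_{B^{-1}}\phi_1(Y)\}$ lies in the singly Cheeger-deformed symmetric-space metric, so Example (a) of \cite{longkerinpaper} demands $[Ad_{B^{-1}}\phi_1(X),Ad_{B^{-1}}\phi_1(Y)]=0$ and $[(Ad_{B^{-1}}\phi_1(X))_{\mathfrak{p}},(Ad_{B^{-1}}\phi_1(Y))_{\mathfrak{p}}]=0$. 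Condition (4) delivers the $\mathfrak{p}$-bracket (using that $G/K=\mathbb{C}P^n$ is positively curved so commuting $\mathfrak{p}$-vectors are linearly dependent), while the full bracket follows from (2) and (3): the symmetric-pair relations $[\mathfrak{k},\mathfrak{k}]\subseteq\mathfrak{k}$, $[\mathfrak{k},\mathfrak{p}]\subseteq\mathfrak{p}$, $[\mathfrak{p},\mathfrak{p}]\subseteq\mathfrak{k}$ force $[X_{\mathfrak{k}},Y_{\mathfrak{k}}]=0$ and $[X_{\mathfrak{k}},Y_{\mathfrak{p}}]+[X_{\mathfrak{p}},Y_{\mathfrak{k}}]=0$, so expanding $\phi_1(X)=\lambda_1 X_{\mathfrak{k}}+X_{\mathfrak{p}}$ yields $[\phi_1(X),\phi_1(Y)]=0$, and $Ad_{B^{-1}}$ is a Lie-algebra isomorphism.

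The second projection $\widehat{\sigma}_2=\spa\{\Psi^{-1}X,\Psi^{-1}Y\}$ sits in the doubly Cheeger-deformed metric, and \cite[Example (b), pg. 1005]{longkerinpaper} forces zero curvature iff $[X,Y]=[X_{\mathfrak{p}},Y_{\mathfrak{p}}]=[X_{\mathfrak{m}},Y_{\mathfrak{m}}]=[X_{\mathfrak{h}},Y_{\mathfrak{h}}]=0$, which is exactly conditions (2), (3), and (5). The converse direction simply runs these steps backwards: any zero-curvature plane at $[(B,I)]$ lifts via Gray--O'Neill \cite{Gr,On1} to a horizontal zero-curvature plane at $(B,I)$, left-translation forces it into the form $\widehat{\sigma}$ above, and factor-by-factor flatness recovers (1)--(5). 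The one step that I expect to require genuine care is correctly reading the statement of Example (b), since two non-commuting Cheeger deformations give a curvature formula whose vanishing conditions decompose along both $\mathfrak{h}$ and $\mathfrak{m}$ rather than merely along $\mathfrak{k}$; this is where the extra requirement (5) absent from Proposition \ref{prop:Wcurvature} comes from, and where the bookkeeping is most delicate.
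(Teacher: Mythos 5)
Your proposal is correct and follows exactly the route the paper intends: the paper gives only the one-line remark that the proof is analogous to Proposition \ref{prop:Wcurvature} with $\widehat{X}$ replaced by $(-\phi_1^{-1}(Ad_{B^{-1}}\phi_1(X)),\Psi^{-1}X)$ and Example (a) replaced by Example (b), and you have expanded that remark faithfully, including the correct observation that $[\phi_1(X),\phi_1(Y)]=0$ follows from conditions (2) and (3) via the symmetric-pair decomposition.
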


We will use Kerin's classification of horizontal zero-curvature planes given in  \cite[Proposition 5.1]{longkerinpaper}. Note, however, that the type (iv) plane in our statement differs slightly from that in [10, Proposition 5.1].  This is due to a small error in Kerin's proof where he asserts that a type (iv) plane as stated in Proposition \ref{prop:martin5.1} can be assumed to have $\alpha = 0$ because it may be assumed, without loss of generality, that the vectors $\Psi^{-1}(X)$ and $\Psi^{-1}(Y)$ below are orthogonal.  However, if $\alpha \neq 0$, then an orthogonal basis for a plane of type (iv) is obtained by modifying $\Psi^{-1}(X)$ by a multiple of $\Psi^{-1}(Y)$ and resulting vector $X$ is non-zero in the bottom $n\times n$ block, so no longer belongs to type (iv).  In Theorem \ref{thm:quasi}, whose proof was provided by Kerin, we will show that the main result of \cite{longkerinpaper} is correct in spite of this error.

\begin{proposition}[Kerin]\label{prop:martin5.1}  With respect to Kerin's metric, assume there is a horizontal zero-curvature plane at $(B,I)\in G\times G$.  Then the left translation of this plane to the identity is spanned by vectors   $$\widehat{X} = (-\phi_1^{-1} (Ad_{B^{-1}}(\phi_1(X))), \Psi^{-1} X) \text{ and } \widehat{Y}  = (-\phi_1^{-1} (Ad_{B^{-1}}(\phi_1(Y))), \Psi^{-1} Y)$$ where $X$ and $Y$ can be expressed as one of the following types: 

    \begin{itemize}
        \item[(i)] $X\in \mathfrak{g}$ and $Y \in \diag(i, 0, ..., 0)$
        
        \item[(ii)] $X\in \mathfrak{p} \oplus \mathfrak{h}$ and $Y \in \diag(0, i, ..., 0)$
        
        \item[(iii)] $X\in \mathfrak{p} \oplus \mathfrak{h}$ and $Y \in \diag(i, i, ..., 0)$
        
        \item[(iv)] 
        $X = \left(\begin{tabular}{c|ccc}
            $i\alpha$ & & $-\overline{x^T}$ & \\
            \hline
             & & & \\
            $x$ & & 0 & \\
             & & &
        \end{tabular}\right)$ and $Y = \left(\begin{tabular}{c|c|ccc}
             $i$ & & & & \\
             \hline
             & $i\beta$ & & $-\overline{y^T}$ & \\
             \hline
             & & & & \\
             & $y$ & & 0 & \\
             & & & &
        \end{tabular}\right)$

        where $x_2 \not= 0$, $\beta = 1 - \sum_{j=3}^{n+1}|y_j|^2 $, and $x_j=-ix_2y_j$ for $j\in \{3,...,n+1\}$
        
        \item[(v)]
        $X = \left(\begin{tabular}{c|ccc}
            $i\alpha$ & & $-\overline{x^T}$ & \\
            \hline
             & & & \\
            $x$ & & 0 & \\
             & & &
        \end{tabular}\right)$ and $Y = \left(\begin{tabular}{c|c|ccc}
             $0$ & & & & \\
             \hline
             & $i\beta$ & & $-\overline{y^T}$ & \\
             \hline
             & & & & \\
             & $y$ & & 0 & \\
             & & & &
        \end{tabular}\right)$

        where $x =  (0, x_3, ... , x_{n+1})^T \not= 0$ and $\sum_{j=3}^{n+1}x_j\overline{y_j} = 0 $.
    \end{itemize}

\end{proposition}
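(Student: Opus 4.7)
The plan is to use Proposition \ref{prop:Kcurvature} to translate the zero-curvature condition into algebraic conditions on $X, Y \in \mathfrak{g}$, and then to classify commuting pairs up to the freedom of choosing a basis of the plane. First, since condition 3 of Proposition \ref{prop:Kcurvature} requires $\{X_\mathfrak{p}, Y_\mathfrak{p}\}$ to be linearly dependent, I would replace $X$ by $X-cY$ (or swap $X$ and $Y$) to arrange that either both $X_\mathfrak{p}$ and $Y_\mathfrak{p}$ vanish, or $Y_\mathfrak{p}=0$ and $X_\mathfrak{p}\neq 0$. This preserves the span of the plane and, because $\phi_1$ is linear, also preserves condition 1 of the proposition.

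In the case $X_\mathfrak{p}=Y_\mathfrak{p}=0$, both $X$ and $Y$ lie in $\mathfrak{k}=\mathfrak{u}(1)\oplus\mathfrak{u}(n)$, and condition 2 together with $[X_\mathfrak{h},Y_\mathfrak{h}]=[X_\mathfrak{m},Y_\mathfrak{m}]=0$ forces simultaneous block-diagonal normal forms. The $\mathfrak{u}(1)$ summand is one-dimensional and abelian, while the $\mathfrak{u}(n)$-block commuting condition, together with the compatibility with the $H$-decomposition, means that after rescaling and replacing $Y$ by $Y - cX$, we may take $Y$ to be one of $\diag(i,0,\ldots,0)$, $\diag(0,i,0,\ldots,0)$, or $\diag(i,i,0,\ldots,0)$; these correspond respectively to types (i), (ii), and (iii). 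The centralizer of each of these elements in $\mathfrak{g}$ then gives the stated constraints on $X$.

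The case $Y_\mathfrak{p}=0$, $X_\mathfrak{p}\neq 0$ is the more delicate one. Writing the $\mathfrak{p}$-component of $[X,Y]=0$ and using $[\mathfrak{k},\mathfrak{k}]\subseteq\mathfrak{k}$, we obtain $[X_\mathfrak{p},Y]=0$, so the $\mathfrak{u}(n)$-block of $Y$ must fix the column vector underlying $X_\mathfrak{p}$ as an eigenvector. Together with $[X_\mathfrak{m},Y_\mathfrak{m}]=0$ and the requirement that $X_\mathfrak{h}$ be block diagonal with respect to $\mathfrak{h}$, this forces $X$ and $Y$ into the forms listed in (iv) or (v). The dichotomy between (iv) and (v) arises from whether the $(1,1)$-entry of $Y$ is nonzero or zero: when it is nonzero, I would normalize it to $i$ and derive the relations $x_j=-ix_2y_j$ and $\beta=1-\sum_{j\geq 3}|y_j|^2$ directly from the entrywise commutator equations; when it is zero, the analogous computation yields instead the orthogonality relation $\sum_{j\geq 3} x_j\overline{y_j}=0$.

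The main technical obstacle is the meticulous bookkeeping required in the case $X_\mathfrak{p}\neq 0$: one must extract the precise constraints on the entries of $X$ and $Y$ from the coupled commutator equations, and simultaneously verify that the form claimed in (iv), with possibly nonzero $\alpha$, cannot always be further reduced by subtracting a multiple of $Y$ from $X$. This last point is precisely the subtlety that was overlooked in \cite[Proposition 5.1]{longkerinpaper}, and is the reason our formulation of type (iv) differs slightly from Kerin's.
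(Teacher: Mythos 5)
The paper does not actually supply a proof of this proposition: it is quoted, with a correction to type (iv), from \cite[Proposition~5.1]{longkerinpaper}, and the authors simply explain where Kerin's original argument goes wrong and defer the consequences to Theorem~\ref{thm:quasi}. So any proof you write here is by definition ``a different route,'' and I will assess it on its own merits.

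Your overall strategy — reduce to the algebraic conditions of Proposition~\ref{prop:Kcurvature}, normalize so that $Y_\mathfrak{p}=0$, and then case-split — is the right starting point, and your remark about why one cannot force $\alpha=0$ in type (iv) (subtracting a multiple of $Y$ from $X$ would make the lower $n\times n$ block of $X$ nonzero, leaving the allowed form) correctly identifies the subtlety this paper flags. However, the case distinction you use to separate types (i)--(iii) from types (iv)--(v) does not match the statement. You claim that when $X_\mathfrak{p}=Y_\mathfrak{p}=0$ one lands in types (i)--(iii), but in type (i) the vector $X$ is an arbitrary element of $\mathfrak{g}$, and in types (ii), (iii) it lies in $\mathfrak{p}\oplus\mathfrak{h}$; in all three cases $X_\mathfrak{p}$ is permitted to be nonzero. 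Conversely, in types (iv) and (v) the vector $X$ necessarily has nonzero $\mathfrak{p}$-part. So the five types are not sorted by whether $X_\mathfrak{p}$ vanishes; they are sorted by the form of $Y$ once it has been normalized into $\mathfrak{k}$ with trivial $\mathfrak{u}(n-1)$-block (diagonal $Y$ gives (i)--(iii); $Y$ with a nonzero off-diagonal row $y$ gives (iv)--(v), split by whether $Y_{11}$ vanishes). Your step two therefore proves a different, strictly narrower statement than the one claimed, and the planes of types (i)--(iii) with $X_\mathfrak{p}\neq 0$ would fall through the cracks of your case analysis. You would need to redo the case split by first using condition 1 of Proposition~\ref{prop:Kcurvature} to kill the $\mathfrak{u}(n-1)$-block of $Y$, and then branch on the remaining $2\times 2$ data of $Y$, before reasoning about $X$.

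Separately, the passage from ``commuting pairs'' to the specific relations $\beta=1-\sum|y_j|^2$, $x_j=-ix_2y_j$, $\sum x_j\overline{y_j}=0$ is where the real work lies, and you describe it only as ``derive the relations \dots{} directly from the entrywise commutator equations.'' That is a fair outline, but the paper's deference to \cite{longkerinpaper} signals that these computations are lengthy; a verification that the five listed types are actually exhaustive (i.e.\ no commuting, horizontal, $\mathfrak{p}$-degenerate pair escapes all five forms) would need to be carried out, not merely asserted.
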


The next proposition indicates that for the task of determining whether a point has a zero-curvature plane, we can freely switch between Wilking's metric and Kerin's metric.

\begin{proposition}\label{prop:switchmetric} For any $[(B,I)]\in \mathcal{E}_{\vec{p},\vec{q}}$, there is a zero-curvature plane at $[(B,I)]$ with respect to Kerin's metric if and only if there is a zero-curvature plane there with respect to Wilking's metric.

\end{proposition}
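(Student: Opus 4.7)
The plan is to translate between Wilking's and Kerin's zero-curvature conditions using the linear map $\phi_1$ and its inverse. Both directions reduce to the substitutions $X \mapsto \phi_1(X)$ and $X \mapsto \phi_1^{-1}(X)$, together with routine bracket computations based on the symmetric pair relations for $(G, K)$ and $(K, H)$.

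For the direction ``Kerin implies Wilking'', suppose $(X, Y)$ satisfies the five conditions of Proposition~\ref{prop:Kcurvature}. I plan to verify that $(\phi_1(X), \phi_1(Y))$ satisfies the four conditions of Proposition~\ref{prop:Wcurvature}. Conditions~1 and~4 translate by direct substitution, and condition~3 is preserved because $\phi_1$ restricts to the identity on $\mathfrak{p}$. For condition~2, I will expand
\[ [\phi_1(X), \phi_1(Y)] = \lambda_1^2 [X_{\mathfrak{k}}, Y_{\mathfrak{k}}] + \lambda_1\bigl([X_{\mathfrak{k}}, Y_{\mathfrak{p}}] + [X_{\mathfrak{p}}, Y_{\mathfrak{k}}]\bigr) + [X_{\mathfrak{p}}, Y_{\mathfrak{p}}], \]
and use the $\mathfrak{k}$- and $\mathfrak{p}$-components of Kerin's $[X, Y] = 0$ together with Kerin's $[X_{\mathfrak{p}}, Y_{\mathfrak{p}}] = 0$ to force the bracket to vanish.

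For the converse, ``Wilking implies Kerin'', I will set $\widetilde X = \phi_1^{-1}(X)$ and $\widetilde Y = \phi_1^{-1}(Y)$. Conditions~1, 3, and~4 of Kerin translate into Wilking's corresponding conditions on $(X, Y)$ by the same substitution logic, and condition~2, $[\widetilde X, \widetilde Y] = 0$, follows from the parallel bracket expansion using Wilking's hypotheses. The crucial additional input for Kerin's condition~5 comes from Wilking's condition~1: pairing it against $(0, \diag(0, 0, C')) \in \mathfrak{l}$ for every $C' \in \mathfrak{u}(n-1)$ forces $X_{\mathfrak{u}(n-1)} = Y_{\mathfrak{u}(n-1)} = 0$, so $X_{\mathfrak{h}}, Y_{\mathfrak{h}}$ lie in the abelian subalgebra $i\mathbb{R} \oplus i\mathbb{R} \oplus 0$ and commute. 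This yields the first half of condition~5. The second half follows by taking the $\mathfrak{h}$-component of the identity $[X_{\mathfrak{k}}, Y_{\mathfrak{k}}] = 0$ and invoking the $(K, H)$-symmetric-pair relation $[\mathfrak{k} \ominus \mathfrak{h}, \mathfrak{k} \ominus \mathfrak{h}] \subseteq \mathfrak{h}$, which gives $[X_{\mathfrak{k} \ominus \mathfrak{h}}, Y_{\mathfrak{k} \ominus \mathfrak{h}}] = -[X_{\mathfrak{h}}, Y_{\mathfrak{h}}] = 0$.

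The hard part is recognizing that Wilking's seemingly mild horizontality condition~1 in fact restricts $X_{\mathfrak{h}}, Y_{\mathfrak{h}}$ to the two-dimensional abelian centre of $\mathfrak{h}$, and then applying the $(K, H)$-symmetric-pair structure on the complement $\mathfrak{k} \ominus \mathfrak{h}$ to close up the final bracket calculation for condition~5.
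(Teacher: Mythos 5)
Your proof is correct and follows the same basic substitution idea as the paper's proof, but with two genuine (if secondary) differences. First, you replace \emph{both} $X$ and $Y$ by $\phi_1^{\pm 1}(X)$ and $\phi_1^{\pm 1}(Y)$ and then do a short bracket expansion for condition~2; the paper instead keeps $Y$ fixed, observing that because $Y \in \mathfrak{k}$ (a fact it reads off from the type classification of Proposition~\ref{prop:martin5.1}) one has $\phi_1(Y) = \lambda_1 Y$, so no replacement is needed. Your route buys independence from the type classification but costs the extra (easy) bracket computation. Second, for Kerin's condition~5 you give a purely algebraic argument: condition~1 kills the $\mathfrak{u}(n-1)$ components so that $X_{\mathfrak{h}}, Y_{\mathfrak{h}}$ land in the abelian centre $i\mathbb{R}\oplus i\mathbb{R}\oplus 0$, and then the $\mathfrak{h}$-component of $[X_{\mathfrak{k}},Y_{\mathfrak{k}}]=0$ combined with the $(K,H)$-symmetric-pair inclusion $[\mathfrak{k}\ominus\mathfrak{h}, \mathfrak{k}\ominus\mathfrak{h}]\subseteq\mathfrak{h}$ forces the $\mathfrak{m}$-bracket to vanish. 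The paper's printed proof instead argues via the explicit types of Proposition~\ref{prop:martin5.1} (in types (i)--(iii) the $\mathfrak{m}$-component of $Y$ vanishes, in types (iv)--(v) that of $X$ does), which again leans on the type classification and which, strictly read, is stated only for Kerin's metric. Your symmetric-pair argument is more self-contained and in fact coincides with an alternative argument the authors apparently considered; the one imprecision is that you silently identify $\mathfrak{m}$ with $\mathfrak{k}\ominus\mathfrak{h}$, which is correct for this construction but worth stating explicitly, and you state the final bracket vanishing for the original $X,Y$ rather than for $\widetilde X, \widetilde Y$, which is fine since $\phi_1^{-1}$ scales the $\mathfrak{h}$- and $\mathfrak{m}$-components by $\lambda_1^{-1}$, but the rescaling should be mentioned.
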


\begin{proof}  Assume initially that there is a zero-curvature plane with respect to Wilking's metric.  From Proposition \ref{prop:Wcurvature}, there are linearly independent vectors $X,Y\in \mathfrak{g}$ satisfying all four conditions of Proposition \ref{prop:Wcurvature}.

We claim that $X':=\phi_1^{-1}(X) = \frac{1}{\lambda_1} X_{\mathfrak{k}} + X_{\mathfrak{p}}$ and $Y' = Y$ satisfy all 5 conditions of Proposition \ref{prop:Kcurvature}.  Indeed, since in all 5 types of Proposition \ref{prop:martin5.1} have $Y\in \mathfrak{k}$, it follows that $\phi_1(Y) = \lambda_1 Y$ is a multiple of $Y$.  In particular, conditions 1 and 4 of Proposition \ref{prop:Kcurvature} follow immediately from conditions 1 and 4 of Proposition \ref{prop:Wcurvature}.  Moreover, since $ X'$ and $X$ have the same $\mathfrak{p}$ component, the same holds for condition 3.

For condition 2, we argue as follows.  We already know that $0 = [X,Y]$ and since $Y\in \mathfrak{p}$ and $(G,K)$ is a symmetric pair, we see that $0 = \underbrace{[X_{\mathfrak{k}}, Y]}_{\mathfrak{p}} + \underbrace{[X_{\mathfrak{p}},Y]}_{\mathfrak{k}}$ so that $[X_{\mathfrak{k}},Y] =[X_{\mathfrak{p}},Y]= 0$.  Then $$[X',Y'] = \frac{1}{\lambda_1} [X_{\mathfrak{k}}, Y] + [ X_{\mathfrak{p}}, Y] = 0,$$ confirming condition 2 of Proposition \ref{prop:Kcurvature}.

We finally verify condition $5$.  First, the condition 1 implies that $Y$ is orthogonal to $\mathfrak{h}$, so that $Y_{\mathfrak{h}} = 0$.  Thus, $[X'_{\mathfrak{h}}, Y'_{\mathfrak{h}}] = [X'_{\mathfrak{h}},0] = 0$.  On the other hand, in types (i), (ii), and (iii), $Y$ has vanishing $\mathfrak{m}$ component, while in types (iv) and (v), $X$ has vanishing $\mathfrak{m}$ component.  Since $X'_{\mathfrak{m}} = \lambda_1 X_{\mathfrak{m}}$, we see that in all five types, either $X'_{\mathfrak{m}} = 0$ or $Y'_{\mathfrak{m}} = 0$, so that $[X'_{\mathfrak{m}}, Y'_{\mathfrak{m}}] = 0$ in all cases.

Having verified all $5$ conditions of Proposition \ref{prop:Kcurvature}, it follows that there is a zero-curvature plane with respect to Kerin's metric.

Conversely, if $X$ and $Y$ satisfy all 5 conditions of Proposition \ref{prop:Kcurvature}, then arguing similarly, it is easy to see that $X' = \phi_1(X)$ and $Y' = Y$ satisfy all 4 conditions of Proposition \ref{prop:Wcurvature}.

\end{proof}

We have the following characterization of zero-curvature planes of type (iv) and (v).

\begin{proposition}\label{prop:pregiveseqns}  Suppose $X$ and $Y$ fall into type (iv) or (v) of Proposition \ref{prop:martin5.1} and let $\sigma = \operatorname{span}\{\widehat{X}, \widehat{Y}\}$.  Then the left translation of $\sigma$ to $(B,I)$ is a horizontal zero-curvature plane with respect to Kerin's metric if and only if all of the following conditions are satisfied.

\begin{enumerate}\item
$\Tr(i\phi_1(X)  (Ad_B (\diag(p_1,p_2,..., p_{n+1})) - \diag(q_1,q_2,0,...,0)) = 0$

\item $\Tr(i Y  (Ad_B (\diag(p_1,p_2,..., p_{n+1})) - \diag(q_1,q_2,0,...,0)) = 0$

\item $\{(Ad_{B^{-1}}(\phi_1(X)))_{\mathfrak{p}}, (Ad_{B^{-1}} Y)_{\mathfrak{p}}\}$ is linearly dependent.

\end{enumerate}

\end{proposition}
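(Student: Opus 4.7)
The plan is to apply Proposition \ref{prop:Kcurvature} to $X$ and $Y$ of the specific forms given in types (iv) and (v) of Proposition \ref{prop:martin5.1}, and show that its five conditions reduce to the three conditions listed here. In both types one has $Y\in\mathfrak{k}$, so $Y_{\mathfrak{p}}=0$ and $\phi_1(Y)=\lambda_1 Y$. The former makes condition (3) of Proposition \ref{prop:Kcurvature} (linear dependence of $\{X_{\mathfrak{p}},Y_{\mathfrak{p}}\}$) automatic, while the latter says that condition (4) of Proposition \ref{prop:Kcurvature} differs from condition (3) of the current proposition only by the nonzero scalar $\lambda_1$ on the second argument, hence is equivalent as a linear dependence statement.

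Next I would verify by direct matrix computation that the algebraic constraints built into the types---$x_j=-ix_2y_j$ together with $\beta=1-\sum|y_j|^2$ in type (iv), and $\sum x_j\overline{y_j}=0$ in type (v)---are exactly what force the remaining bracket conditions (2) and (5) of Proposition \ref{prop:Kcurvature}, i.e.\ $[X,Y]=0$ and $[X_{\mathfrak{m}},Y_{\mathfrak{m}}]=[X_{\mathfrak{h}},Y_{\mathfrak{h}}]=0$. The bracket $[X_{\mathfrak{h}},Y_{\mathfrak{h}}]$ vanishes without effort, because both summands are diagonal in the partition $(1,1,n-1)$. The other two produce block-sparse matrices whose entries are combinations such as $\sum x_j\overline{y_j}$ and $i\overline{x_2}(1-\beta-\sum|y_j|^2)$, and these expressions collapse to zero precisely in the presence of the listed type constraints.

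The key remaining step is translating the horizontality condition (1) of Proposition \ref{prop:Kcurvature} into the trace equations (1) and (2) of the current proposition. Parametrize $\mathfrak{l}\subseteq\mathfrak{g}\oplus\mathfrak{g}$ as
\begin{equation*}
\{(izP,\; izQ+\diag(0,0,C)) : z\in\mathbb{R},\; C\in\mathfrak{u}(n-1)\},
\end{equation*}
where $P=\diag(p_1,\ldots,p_{n+1})$ and $Q=\diag(q_1,q_2,0,\ldots,0)$. Substituting into $\langle\phi_1(X),Ad_B L_1-L_2\rangle_0=0$ and using $\langle A,B\rangle_0=-\Tr(AB)$ together with the $Ad_G$-invariance of the pairing, the horizontality requirement splits into two independent pieces: the $z$-part gives condition (1) of the current proposition, while the $C$-part demands that the lower-right $(n-1)\times(n-1)$ block of $\phi_1(X)$ vanish, which holds automatically because the bottom-right $n\times n$ block of $X$ (and hence of $\phi_1(X)$) is zero in both types. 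The same computation applied to $\widehat{Y}$, combined with $\phi_1(Y)=\lambda_1 Y$, yields condition (2).

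The main technical obstacle is the bracket computation of the second step, where the tangled relationships among $\alpha$, $x$, $\beta$, and $y$ in type (iv) must be unwound carefully to produce the required cancellations; this is parallel to the analysis Kerin carried out in deriving his classification in \cite[Proposition 5.1]{longkerinpaper}, so the work here is essentially organizing that output. Putting all of the above together, the plane $\sigma$ is horizontal and zero-curvature if and only if conditions (1), (2), and (3) of the current proposition hold.
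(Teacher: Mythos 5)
Your proposal is correct and follows essentially the same route as the paper: invoke Proposition \ref{prop:Kcurvature}, observe that its conditions 2, 3, 5 are automatic for $X,Y$ of types (iv) and (v), identify condition 4 with the linear-dependence condition stated here (using $\phi_1(Y)=\lambda_1 Y$), and split the horizontality condition along the decomposition $\mathfrak{l}=\mathfrak{u}(1)\oplus\mathfrak{u}(n-1)$, with the $\mathfrak{u}(n-1)$-piece vacuous and the $\mathfrak{u}(1)$-piece producing the two trace equations. One small imprecision in the second step: the vanishing of $[X_{\mathfrak{m}},Y_{\mathfrak{m}}]$ does not depend on any cancellation coming from the type constraints; since the bottom-right $n\times n$ block of $X$ is zero in both types, $X$ has no component in $\mathfrak{m}=\mathfrak{k}\cap\mathfrak{h}^\perp$, so $X_{\mathfrak{m}}=0$ and the bracket vanishes identically. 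The cancellations you describe, such as $i\overline{x_2}\bigl(1-\beta-\sum|y_j|^2\bigr)$ for type (iv) and $\sum x_j\overline{y_j}$ for type (v), are the ones needed only for $[X,Y]=0$.
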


\begin{proof}From Proposition \ref{prop:Kcurvature}, $\sigma$ has zero-curvature if and only if all five conditions of Proposition \ref{prop:Kcurvature} hold.

Noting that all five conditions are scale invariant, and that $\phi_1(Y)$ is a multiple of $Y$ since $Y\in \mathfrak{k}$, we may replace all occurrences of $\phi_1(Y)$ with $Y$.  Thus, condition 4 of Proposition \ref{prop:Kcurvature} yields the third condition of this proposition.

We also note that by inspection, conditions $2,3,$ and $5$ hold automatically and that condition $1$ holds for all $(L_1,L_2)\in \mathfrak{u}(n-1)\subseteq \mathfrak{l} = \mathfrak{u}(1)\oplus \mathfrak{u}(n-1)$.  Thus condition one reduces to checking orthogonality to the $\mathfrak{u}(1)$ factor.  But clearly $\mathfrak{u}(1) = \operatorname{span} \{ i (\diag(p_1,...., p_{n+1}), \diag(q_1,q_2,0,...,0))\}$.  Thus, condition 1 of Proposition \ref{prop:Kcurvature} gives conditions one and two of this proposition.
\end{proof}

\section{Cohomogeneity two generalized Eschenburg spaces}\label{sec:cohom}

We now turn our attention to a family of generalized Eschenburg spaces admitting a natural action of cohomogeneity two.  Specifically, we will assume that $\vec{p} = (p_1,p_2 , p_2,.... , p_2)$.  If we subtract the same number simultaneously from all entries of both $\vec{p}$ and $\vec{q}$, the resulting $S^1$-action on $U(n+1)/U(n-1)$ is the same.  Hence, we may assume without loss of generality that $p_2 = p_3 = ... = p_{n+1} = 0$, and we write $p$ in place of $p_1$.  We will sometimes denote the resulting space $\mathcal{E}_{\vec{p},\vec{q}}$ as $\mathcal{E}_{p, q_1,q_2}$.

From equation \eqref{eqn:gcd}, we have the following characterization of when the action is free.

\begin{proposition}\label{prop:simpad}  A collection $p, 0, ..., 0, q_1,q_2$ of integers is admissible if and only if \begin{equation}\label{eqn:gcd2}\gcd(q_1,q_2) = \gcd(p-q_1,q_2) = \gcd(q_1,p-q_2) = 1.\end{equation}
\end{proposition}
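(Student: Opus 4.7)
The plan is a direct case analysis. With the specialization $p_1 = p$ and $p_2 = \cdots = p_{n+1} = 0$, the family of pairs $(p_i - q_1, p_j - q_2)$ parametrized by ordered pairs $i \neq j$ in $\{1,\ldots,n+1\}$ takes only three distinct forms, and these forms will match the three gcd conditions in \eqref{eqn:gcd2} bijectively. So the whole proof reduces to enumerating three cases and invoking $\gcd(\pm a, \pm b) = \gcd(a,b)$.

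First I would partition the ordered pairs $(i,j)$ with $i\neq j$ into three groups according to whether each index equals $1$. The case $i = 1$, $j \geq 2$ produces pairs of the form $(p - q_1, -q_2)$, yielding the condition $\gcd(p - q_1, q_2) = 1$. The case $j = 1$, $i \geq 2$ produces pairs $(-q_1, p - q_2)$, giving $\gcd(q_1, p - q_2) = 1$. The remaining case, $i, j \geq 2$ with $i \neq j$, produces pairs $(-q_1, -q_2)$, giving $\gcd(q_1, q_2) = 1$. Since the paper works under $n \geq 2$, the set $\{2,\ldots,n+1\}$ has at least two elements and this third case is nonvacuous, so all three conditions in \eqref{eqn:gcd2} genuinely appear. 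Conversely, any pair $(i,j)$ with $i \neq j$ falls into one of these three buckets, so the three conditions in \eqref{eqn:gcd2} collectively imply \eqref{eqn:gcd}.

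The sign portion of Definition \ref{def:admis}, namely that either $q_2 > 0$ or $q_2 = 0$ and $q_1 > 0$, depends only on $(q_1,q_2)$ and transfers through the specialization unchanged, so it need not be re-examined. There is no real obstacle here; the only point requiring a moment's care is noting that the $(i,j)$ with both indices $\geq 2$ is indeed available under the standing hypothesis $n \geq 2$, which is why the first of the three conditions (involving only $q_1$ and $q_2$) cannot be omitted.
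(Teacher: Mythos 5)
Your proof is correct and is precisely the argument the paper leaves implicit (the paper simply says ``From equation \eqref{eqn:gcd}, we have the following characterization'' and gives no explicit proof). The three-way partition of ordered pairs $(i,j)$ by whether each index equals $1$ is the right decomposition, and you were right to flag that the bucket $i,j \geq 2$ requires $n \geq 2$; otherwise the condition $\gcd(q_1,q_2)=1$ would not be forced.
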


If we specialize Definition \ref{def:admis} to the case of three integers, we obtain the following:

\begin{definition} A triple of integers $(p,q_1,q_2)$ is called admissible if equation \eqref{eqn:gcd2} holds and if $q_2 > 0$ or $(q_1,q_2) = (1,0)$.
\end{definition}

We now show each $\mathcal{E}_{p,q_1,q_2}$ admits a natural cohomogeneity two action.  To that end, we let $A(t,r)$ be defined as the matrix $$A(t,r)= \begin{bmatrix} \cos t \cos r & -\sin r & -\sin t\cos r\\ \cos t\sin r & \cos r & -\sin t \sin r\\ \sin t & 0 & \cos t\end{bmatrix}.$$

Set $$\mathcal{F} = \{(\diag(A(t,r),I_{n-2}), I_{n+1})\in G\times G: t,r\in [0,\pi/2]\}.$$   We observe that $\mathcal{F}$ is homeomorphic to a two-dimensional ball, so $\dim \mathcal{F} = 2$.  By abuse of notation, we will sometimes identify the element $\diag(A(t,r),I_{n-2}, I_{n+1})\in \mathcal{F}$ with $A(t,r)\in SO(3)$.

\begin{proposition}\label{prop:cohom2} For any admissible triple $(p,q_1,q_2)$, the action by $U(n)\times T^3$ on $G\times G$ given by $$(D,w_1,w_2,w_3)\ast (g_1,g_2) = (g_1 \diag(w_1,D)^{-1}, g_2 \diag(w_2,w_3,I_{n-1})^{-1})$$ descends to an action on $\mathcal{E}_{p,q_1,q_2}$.  Every $(U(n)\times T^3)$-orbit intersects the image of $\mathcal{F}$ in exactly one point, so that, in particular, the action has cohomogeneity two.

\end{proposition}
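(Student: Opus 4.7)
The plan proceeds in three steps: verifying descent, reducing orbit classification to a double-coset problem, and identifying $\mathcal{F}$ as a parametrization of the orbit space.

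First, I would verify that the $U(n)\times T^3$-action descends by checking that it commutes with the $\Delta G \times L$-action. Since $\Delta G$ acts on the left and the new action is on the right of $G\times G$, they commute automatically. For the $L$-action: the right-multiplier $\diag(w_1,D)$ on $g_1$ and the $L$-right-multiplier $\diag(z^p,I_n)$ on $g_1$ both sit in the $1+n$ block-diagonal subgroup $U(1)\times U(n)\subset U(n+1)$, so they commute; similarly, $\diag(w_2,w_3,I_{n-1})$ and $\diag(z^{q_1},z^{q_2},C)$ both sit in the $1+1+(n-1)$ block-diagonal subgroup $U(1)^2\times U(n-1)\subset U(n+1)$. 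Thus the new action descends to $\mathcal{E}_{p,q_1,q_2}$.

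Next, I would reduce the orbit analysis on $\mathcal{E}$ to a double-coset computation. The $\Delta G$-action lets every class in $\mathcal{E}$ be written as $[(B,I)]$ with $B\in U(n+1)$, and two such representatives agree in $\mathcal{E}$ iff $B_2=\ell_2 B_1 \ell_1^{-1}$ for some $(\ell_1,\ell_2)\in L$. Combining this stabilizer equivalence with the $U(n)\times T^3$-action, a direct calculation shows $(B_1,I)$ and $(B_2,I)$ lie in the same $U(n)\times T^3$-orbit iff
$$B_2 = \diag(u_1,u_2,C)\, B_1\, \diag(v_1,D)^{-1},$$
where $u_1=z^{q_1}w_2$, $u_2=z^{q_2}w_3$, $v_1=z^{-p}w_1^{-1}$, $C\in U(n-1)$, $D\in U(n)$. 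Because the three $T^3$-parameters $w_1,w_2,w_3$ can absorb the shared $z\in U(1)$ coming from $\ell_1,\ell_2$, the triple $(u_1,u_2,v_1)$ ranges freely over $U(1)^3$. The $U(n)\times T^3$-orbit space on $\mathcal{E}_{p,q_1,q_2}$ is therefore identified with the double coset space
$$(U(1)^2\times U(n-1))\,\backslash\, U(n+1)\,/\,(U(1)\times U(n)).$$

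Finally, I would identify this double coset space with the image of $\mathcal{F}$. Right-modding by $U(1)\times U(n)$ gives $U(n+1)/(U(1)\times U(n))\cong \mathbb{C}P^n$, realized by the projective class of the first column of $B$. The remaining left action of $U(1)^2\times U(n-1)$ on $\mathbb{C}P^n$ rotates the first two homogeneous coordinates by independent phases and acts unitarily on the last $n-1$. An elementary normalization then shows every orbit has a unique representative $[a:b:c:0:\cdots:0]$ with $a,b,c\geq 0$ and $a^2+b^2+c^2=1$, so the orbit space is the closed spherical octant. Since the first column of $\diag(A(t,r),I_{n-2})$ equals $(\cos t\cos r,\cos t\sin r,\sin t,0,\ldots,0)^T$, the image of $\mathcal{F}$ surjects onto the octant and each orbit has a unique representative in this image, establishing cohomogeneity two.

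The main obstacle is Step 2: the careful bookkeeping needed to confirm that the shared $z\in S^1$ inside $\ell_1$ and $\ell_2$ does not impose any constraint on $(u_1,u_2,v_1)$ once one exploits the three free $T^3$-parameters, along with some care at the degenerate locus $t=\pi/2$ where the $(t,r)$-parametrization of the octant collapses.
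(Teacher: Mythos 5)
Your approach mirrors the paper's own proof step for step: both verify descent by the same block-commutation observation, both reduce the orbit analysis to the stabilizer of $G\times\{I\}$ acting on $U(n+1)$ by independent phases on the first two rows, a $U(n-1)$ on the remaining rows, and a $U(1)\times U(n)$ on the columns, and both identify the orbit space with the spherical octant via the moduli of the first column. Your repackaging of that stabilizer action as the double coset $(U(1)^2\times U(n-1))\backslash U(n+1)/(U(1)\times U(n))$, and then as $(U(1)^2\times U(n-1))\backslash \mathbb{C}P^n$, is a cleaner way to organize what the paper computes directly, but it is not a genuinely different route; your worry about the ``shared $z$'' is in fact a non-issue since the three $T^3$-parameters trivially saturate $U(1)^3$. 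You also correctly flag the degeneracy at $t=\pi/2$: since $A(\pi/2,r)$ has first column $(0,0,1,0,\ldots,0)^T$ for every $r$, the whole segment $\{A(\pi/2,r)\}_{r\in[0,\pi/2]}$ lies in a single $U(n)\times T^3$-orbit while consisting of distinct points of $\pi(\mathcal{F})$, so the ``exactly one point'' statement is literally too strong as written; the published proof has the same issue (it asserts the three column-moduli determine $(t,r)$ uniquely, which fails precisely at $t=\pi/2$), but this is confined to a nowhere dense subset and is harmless both for the cohomogeneity-two conclusion and for the way Corollary~\ref{cor:checkF} is used later.
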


\begin{proof}
We first observe that for $z\in S^1$, $\diag(z^{p_1},...,z^{p_n}) = \diag(z^{p}, 1,...,1)$ and thus $\diag(z^p,1,...,1)$ commutes with $\diag(w_1,D)$.  In a similar fashion, $\diag(z^{q_1}, z^{q_2}, C)$ with $C\in U(n-1)$, commutes with $\diag(w_2, w_3, I_{n-1})$.  It follows that that the $(U(n)\times T^3)$-action given by $\ast$ commutes with the action $\cdot$ of $G\times L$, described in Section \ref{sec:eschmetric}. In particular, the $(U(n)\times T^3)$-action descends to an action on $\mathcal{E}_{p,q_1,q_2}$.  In fact, viewing $\mathcal{E}_{p,q_1,q_2}$ as the quotient $G\bq L$, the induced action by $U(n)\times T^3$ takes the form  $$(D,w_1,w_2,w_3) \ast [B] = [ \diag(w_1,D) B \diag(w_2,w_3,I_{n-1})].$$

We next show that every $(U(n)\times T^3)$-orbit intersects in image of $\mathcal{F}$ in at least one point.  It is clearly sufficient to show that every orbit of the $((U(n)\times T^3)\times (G\times L))$-action on $G\times G$ intersects $\mathcal{F}$.  We first observe that via the $G\times L$ action by $\cdot $ with $(g, (C,z)) = (g_2^{-1}, (I_{n-1}, 1))$, every orbit contains a point of the form $(g_1,I_{n+1})$.  Moreover, the stabilizer of the set $(G\times \{ I_{n+1}\})\subseteq G\times G$ under the $((U(n)\times T^3)\times (G\times L))$-action is given by $$J:= \{((D,w_1,w_2,w_3), (\diag(w_2 z^{q_1}, w_3z^{q_2}, C), (C,z))\}.$$  The induced $J$-action on $G\cong G\times \{I_{n+1}\}$ is given by $$g_1\mapsto \diag(w_2z^{q_1}, w_3 z^{q_2}, C) g_1 \diag( z^{p} w_1, D)^{-1}.$$  We will show that this $J$-action can move any point in $G$ to one in $\mathcal{F}$.

Write the entries of the matrix $g_1$ as $(g_1)_{ij}$.  Since the standard $U(n-1)$-action on $\mathbb{C}^{n-1}$ is transitive on the unit sphere, by choosing $C$ appropriately, and selecting $z=1, D=I_n$, $w_2 = \overline{(g_1)_{11}}$, and
$w_3 = \overline{(g_1)_{21}}$, we find that the orbit through $g_1$ contains a point whose first column is $$ v:=\begin{bmatrix} |(g_1)_{11}|\\ |(g_1)_{21}|\\ | ((g_1)_{31},...,(g_1)_{n+1,1})|\\ 0 \\ \vdots\\ 0\end{bmatrix}\in \mathbb{C}^{n+1}. $$  This column has unit length which implies that there are unique $t,r\in [0,\pi/2]$ for which the first three entries are $(\cos t \cos r, \cos t\sin r, \sin t)^T$. Thus, we assume without loss of generality that $g_1$ has a first column of the form of $\mathcal{F}$.

If we now further restrict the $J$-action to the action by $U(n)$ by only allowing $D$ to be a non-identity element, this restricted action will preserve the form of $g_1$.  If $g_1'$ is another element of $U(n+1)$ with the same first column as $g_1$, then the element $g_1^{-1} g_1'$ maps the standard basis vector $(1,0,...,0)^T$ to itself, so that $g_1^{-1} g_1' \in U(n)$.  It follows that the $U(n)$ action acts transitively on vectors in $U(n+1)$ with the same first column.  In particular, since $\mathcal{F}$ contains a point with the same first column as $g_1$, the orbit through $g_1$ contains a point in $\mathcal{F}$.

We finally argue that no two distinct points in $\mathcal{F}$ are in the same orbit.  Observe that an element $(g_1,I)$ of $\mathcal{F}$ is determined by the lengths of $(g_1)_{11}$, $(g_1)_{21}$, and $( (g_1)_{31},..., (g_1)_{n+1,1})$.  Thus, it is sufficient to show that these three lengths are invariant under the $J$-action.

To that end, simply observe that for any $(g_1,I)\in \mathcal{F}$, the element $$g_1':=\diag(w_2z^{q_1}, w_3 z^{q_2}, C) g_1 \diag( z^{p} w_1,  D)^{-1}$$ has first column given by $$( w_2z^{q_1} w_1^{-1} z^{-p_1} (g_1)_{11}), w_3z^{q_2} w_1^{-1} z^{-p_1} (g_1)_{21}, C ( (g_1)_{31},...,(g_1)_{n+1,1})^T.$$  Thus, $|(g_1')_{11}| = |(g_1)_{11}|$, $|(g_1')_{21}| = |(g_1)_{21}|$, and, since $C\in U(n-1)$, $$|((g_1')_{31},..., (g_1')_{n+1,1})| = |((g_1)_{31},..., (g_1)_{n+1,1})|,$$ as claimed.

\end{proof}

Let $\pi:G\times G\rightarrow \mathcal{E}_{p,q_1,q_2}$ denote the canonical projection map.  To determine the curvature of $\mathcal{E}_{p,q_1,q_2}$, we will restrict attention to points in $\pi(\mathcal{F})$.  This is justified by the following corollary.

\begin{corollary}\label{cor:checkF} If the set of points in $\mathcal{F}$ of positive curvature is open and dense in $\mathcal{F}$, then $\mathcal{E}_{p,q_1,q_2}$ is almost positively curved.  If there is a non-empty open set of points of $\mathcal{F}$ having at least one zero-curvature plane, then $\mathcal{E}_{p,q_1,q_2}$ is not almost positively curved.

\end{corollary}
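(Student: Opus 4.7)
The plan is to exploit the fact that having only positive-curvature planes at a point is isometry-invariant, together with the fact from Proposition \ref{prop:cohom2} that $\pi(\mathcal{F})$ is a complete set of orbit representatives for $U(n)\times T^3$. Once $\mathcal{F}$ is identified with the orbit space, both claims reduce to a short topological chase.

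First, I would verify that the $(U(n)\times T^3)$-action on $\mathcal{E}_{p,q_1,q_2}$ is isometric. On $G\times G$ the action is by right translation in each factor: in the first factor by elements of $K=U(1)\cdot U(n)$ and in the second by elements of $U(1)\times U(1)\times U(n-1)\subseteq K$. Since the metrics on both factors are Cheeger deformations of bi-invariant metrics in the direction of $K$, they are right $K$-invariant, so the action descends to an isometric action on $\mathcal{E}_{p,q_1,q_2}$. Consequently, the set $P\subseteq \mathcal{E}_{p,q_1,q_2}$ of points at which every $2$-plane has positive curvature is $(U(n)\times T^3)$-invariant; it is also open, since the zero-locus of sectional curvature is closed in the compact Grassmannian bundle $\mathrm{Gr}_2(T\mathcal{E}_{p,q_1,q_2})$, and its projection to $\mathcal{E}_{p,q_1,q_2}$ is therefore closed.

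Next, I would identify $\mathcal{F}$ with the orbit space. By Proposition \ref{prop:cohom2}, $\pi|_{\mathcal{F}}$ is a continuous bijection from the compact space $\mathcal{F}$ onto $\pi(\mathcal{F})$, hence a homeomorphism as $\pi(\mathcal{F})\subseteq \mathcal{E}_{p,q_1,q_2}$ is Hausdorff. Composing with the orbit projection produces a homeomorphism $\mathcal{F}\cong \mathcal{E}_{p,q_1,q_2}/(U(n)\times T^3)$, and the resulting map $q:\mathcal{E}_{p,q_1,q_2}\to \mathcal{F}$ is continuous, surjective, and open, the last because orbit maps for continuous group actions are always open.

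With this setup in place the corollary falls out. Assuming $P\cap \pi(\mathcal{F})$ is open and dense in $\pi(\mathcal{F})$, for any non-empty open $V\subseteq \mathcal{E}_{p,q_1,q_2}$ the set $q(V)$ is non-empty and open in $\mathcal{F}$, so density forces it to meet $q(P)$; $(U(n)\times T^3)$-invariance of $P$ then gives $V\cap P\ne\emptyset$, so $P$ is dense and $\mathcal{E}_{p,q_1,q_2}$ is almost positively curved. Conversely, if $W\subseteq \mathcal{F}$ is a non-empty open set of points admitting a zero-curvature plane, then $q^{-1}(W)$ is a non-empty open subset of $\mathcal{E}_{p,q_1,q_2}\setminus P$, so $P$ is not dense. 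The only non-routine step is the topological identification of $\mathcal{F}$ with the orbit space in a way that preserves openness; the curvature analysis itself is trivial.
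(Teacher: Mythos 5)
Your proof is correct, and it does take a genuinely different route from the paper's. The paper proves two ad-hoc claims: that the saturation $(U(n)\times T^3)\ast\pi(U)$ of an open (resp.\ dense) subset $U\subseteq\mathcal{F}$ is open (resp.\ dense) in $\mathcal{E}_{p,q_1,q_2}$. For openness, the paper argues by hand that the complement is sequentially closed, using the explicit orbit-invariants $\bigl(|(g)_{11}|, |(g)_{21}|, |((g)_{31},\dots,(g)_{n+1,1})|\bigr)$ from the proof of Proposition \ref{prop:cohom2}; for density it uses essentially your argument. Your version packages all of this into the single observation that the orbit projection $\mathcal{E}_{p,q_1,q_2}\to\mathcal{E}_{p,q_1,q_2}/(U(n)\times T^3)$ is automatically open, and that the induced continuous bijection $\mathcal{F}\to\mathcal{E}_{p,q_1,q_2}/(U(n)\times T^3)$ is a homeomorphism by the compact-to-Hausdorff lemma (the orbit space is Hausdorff because $U(n)\times T^3$ is compact). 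That replaces the paper's explicit sequence chase with a standard general-topology fact, and is cleaner for it. You also do something the paper leaves implicit: you check that the $U(n)\times T^3$-action is actually isometric (right $K$-invariance of the Cheeger-deformed metrics), which is what makes the positively-curved set $P$ invariant and is genuinely needed for either version of the argument. The only phrasing I would tighten is your openness claim for $P$: you want the set of points with a zero-curvature plane to be closed; this is the image of a closed subset of the Grassmannian bundle under the projection, and you should note that this projection is proper (compact fibers) so that the image of a closed set is closed — ``its projection is therefore closed'' is not automatic for arbitrary continuous maps. That is a one-line fix and does not affect the soundness of the argument.
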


\begin{proof}  It is enough to establish the following two claims.  First, if $U\subseteq \mathcal{F}$ is open, the the orbit $(U(n)\times T^3)\ast \pi(U)$ is open in $\mathcal{E}_{p,q_1,q_2}$.  Second, if $U$ is dense in $\mathcal{F}$ then $(U(n)\times T^3)\ast \pi(U)$ is dense in $\mathcal{E}_{p,q_1,q_2}$.

We begin with the first claim.  Since $\pi$ is a submersion, it is sufficient to show that the orbit $X:=(U(n)\times T^3) \ast (G\times L)\cdot U$ is open in $G\times G$. We will, in fact, show that the complement $X^c$ of this orbit is closed.  So, select a sequence of points $[((g_1)_i,(g_2)_i)]$ in $X^c$ and suppose this sequence converges to $[(g_1,g_2)]$.  We need to show that $[(g_1,g_2)]\in X^c$ as well.

To that end, let $g_i:= (g_2)_i^{-1} (g_1)_i$, $g:= g_2^{-1} g_2$, and let $(h_i,I), (h,I)\in \mathcal{F}$ denote the unique elements of $\mathcal{F}$ which are orbit equivalent to $((g_1)_i, (g_2)_i)$ and $(g_1,g_2)$.  Since $((g_1)_i, (g_2)_i)$ converges to $(g_1,g_2)$, it follows that the lengths of the entries $|(g_i)_{11}|$ converge to $|(g)_{11}|$, and that similarly the lengths $|(g_i)_{21}|\rightarrow |(g)_{21}|$ and $|( (g_i)_{31},..., (g_i)_{n+1,1})|\rightarrow |( (g)_{31},...,(g)_{n+1,1})|$.  From the proof of Proposition \ref{prop:cohom2}, these lengths determine the corresponding points in $\mathcal{F}$.  It follows, then, that $h_i$ converges to $h$.  As $h_i\in U^c$ for all $i$ and $U^c$ is closed, $h\in U^c$, which then implies that $[(g_1,g_2)]\in X^c$.

We now prove denseness.  So, suppose $U\subseteq \mathcal{F}$ is dense and let $V\subseteq \mathcal{E}_{p,q_1,q_2}$ be any non-empty open set.  We need to show that $\pi((U(n)\times T^3)\ast U)\cap V\neq \emptyset$.  Since $V\neq \emptyset$, we may select $[(g_1,g_2)]\in V$.

Now, from the proof of Proposition \ref{prop:cohom2}, there is a diffeomorphism $\rho:\mathcal{E}_{p,q_1,q_2}\rightarrow \mathcal{E}_{p,q_1,q_2}$ induced  by multiplication by elements in $U(n)\times T^3$,  for which $\rho([(g_1,g_2)]\in \pi(\mathcal{F})$.    Then $\rho(V)$ is an open set intersecting $\pi(\mathcal{F})$.  It follows that $\pi^{-1}(\rho(V))$ is an open set intersecting $\mathcal{F}$.  By definition of denseness, there is an element $(u,I)\in U\cap \pi^{-1}(\rho(V))$.  Then $\pi(u,I) \in \pi(U)\cap \rho(V)$ so $\rho^{-1}\pi(u,I)\in \pi( (U(n)\times T^3)\ast U)\cap V$.

\end{proof}

We conclude this section with an adaptation of Proposition \ref{prop:pregiveseqns} which is valid for any $A(t,r)\in \mathcal{F}.$

\begin{proposition}\label{prop:giveseqns} Suppose $X$ and $Y$ fall into type (iv) or type (v) of Proposition \ref{prop:martin5.1}.  Then the left translations of $\widehat{X}$ and $\widehat{Y}$ to $(\diag(A(t,r), I_{n-2}, I_{n+1})\in G\times G$ span a horizontal zero-curvature plane if and only if all of the following conditions are satisfied, where $\epsilon = 1$ for type (iv) and $\epsilon = 0$ for type (v).   \begin{itemize}\item \begin{multline}\label{eqn:4.2}
    \lambda_1 \alpha p \cos^2t\cos^2r - \lambda_1 \alpha q_1 \\+ 2p (\epsilon \im(x_2)\cos^2t\sin r + \im(x_3)\cos t \sin t )\cos r  = 0
\end{multline}

\item \begin{equation}\label{eqn:4.1}
    p\cos^2t(\beta \sin^2r + \epsilon\cos^2r) + 2p\im(y_3)\cos t \sin t \sin r -\beta q_2-\epsilon q_1 = 0
\end{equation}

\item  For $$V = \begin{bmatrix}( \epsilon x_2 \cos^2 r + \epsilon \overline{x_2} \sin ^2 r - i\lambda_1\alpha \cos r \sin r ) \cos t + \overline{x_3} \sin r \sin t \\ \cos r [x_3 \cos^2 t + (-i\lambda_1\alpha \cos r -2 \epsilon \im(x_2) i\sin r ) \sin t \cos t +\overline{x_3} \sin ^2 t]  \\ x_j \cos t \cos r, j\in \{4,...,n+1\}\end{bmatrix}$$ and $$W = \begin{bmatrix}     \cos r ((\beta - \epsilon)i \sin r \cos t - \overline{y_3} \sin t)\\    y_3 \cos ^2t \sin r - i(\beta \sin^2r +\epsilon \cos^2r)\sin t \cos t +\overline{y_3} \sin^2 t \sin r \\ y_j \cos t \sin r , j\in \{4,...,n+1\}\end{bmatrix},$$ $\{V,W\}$ is  linearly dependent. 

\end{itemize}

\end{proposition}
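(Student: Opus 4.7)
The plan is to apply Proposition \ref{prop:pregiveseqns} directly, specializing to $\vec{p} = (p,0,\ldots,0)$ and $B = \diag(A(t,r), I_{n-2})$, then compute the three conditions in coordinates. Since $A(t,r)$ is real orthogonal, $B^{-1} = \diag(A(t,r)^T, I_{n-2})$, and the first column of $B$ is $v = (\cos t\cos r,\ \cos t\sin r,\ \sin t,\ 0,\ldots,0)^T$. In particular, $Ad_B(\diag(p,0,\ldots,0)) = p v v^*$, so condition 1 reduces to evaluating $ip\,v^* \phi_1(X) v$ and subtracting $iq_1 (\phi_1(X))_{11}$ (the $q_2$-contribution vanishes because $(\phi_1(X))_{22}=0$ in both types), while condition 2 reduces to evaluating $ip\,v^* Y v$ and subtracting $iq_1 Y_{11} + iq_2 Y_{22}$.

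For the trace equations, I would use the explicit block form $\phi_1(X) = \bigl(\begin{smallmatrix} i\lambda_1\alpha & -\overline{x^T}\\ x & 0\end{smallmatrix}\bigr)$ together with the fact that $Y \in \mathfrak{k}$, so $\phi_1(Y) = \lambda_1 Y$ and one may replace $\phi_1(Y)$ by $Y$ throughout condition 2. Expanding $v^*\phi_1(X) v$ gives an $i\lambda_1\alpha\cos^2 t\cos^2 r$ term together with pairs of the form $x_j - \overline{x_j} = 2i\im(x_j)$ in the $(x_2,x_3)$ positions; the analogous expansion of $v^* Y v$ yields a $\beta$-dependent term from the $(2,2)$-entry and an $\im(y_3)$ term from the $(2,3)/(3,2)$-entries, together with a $\cos^2 t\cos^2 r$ term from the $(1,1)$-entry of $Y$. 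The parameter $\epsilon$ is then introduced to handle types (iv) and (v) uniformly: $\epsilon = 1$ retains both the $x_2$-contribution (type (iv) allows $x_2 \neq 0$) and the $i$-entry $Y_{11}$, while $\epsilon = 0$ kills both, corresponding to the type (v) constraints $x_2 = 0$ and $Y_{11} = 0$. The identities produced are exactly equations \eqref{eqn:4.2} and \eqref{eqn:4.1}.

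For the linear-dependence condition, I identify $\mathfrak{p}$ with $\mathbb{C}^n$ via the off-diagonal first column. Since the first column of $B^{-1} M B$ is simply $B^{-1}(Mv)$, and $v$ is supported in its first three entries, $(Ad_{B^{-1}} M)_{\mathfrak{p}}$ is obtained by first computing $Mv$ and then applying $A(t,r)^T$ to its top three components (discarding the resulting (1,1)-entry) while leaving rows four through $n+1$ unchanged. Carrying this out with $M = \phi_1(X)$ produces the vector $V$ displayed in the proposition, and with $M = Y$ produces $W$; the $\epsilon$-parameter again tracks exactly the entries that differ between types (iv) and (v).

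The main obstacle is simply the bookkeeping: one must carefully track which trigonometric monomials come from which matrix entry, and verify that the only places where types (iv) and (v) diverge are through $Y_{11}$ and through $x_2$, so that the single scalar $\epsilon$ really does interpolate between the two cases. Once this is checked, each of the three conditions of Proposition \ref{prop:pregiveseqns} is immediately seen to be equivalent to the corresponding bulleted condition, completing the proof.
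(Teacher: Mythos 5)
Your proposal is correct and follows the same route as the paper's (very terse) proof: apply Proposition \ref{prop:pregiveseqns} at $B = \diag(A(t,r),I_{n-2})$, use $Ad_B(\diag(p,0,\ldots,0)) = p\,vv^*$ with $v = Be_1$ for the two trace conditions, and read off the $\mathfrak{p}$-components of $Ad_{B^{-1}}\phi_1(X)$ and $Ad_{B^{-1}}Y$ from the off-diagonal first column (i.e.\ the $(j,1)$ entries for $j\geq 2$) to recover $V$ and $W$. The only difference is that you spell out the bookkeeping of $v^*\phi_1(X)v$, $v^*Yv$, and $B^{-1}(Mv)$ explicitly where the paper simply calls it ``a simple calculation,'' and your observation that a single scalar $\epsilon$ suffices because types (iv) and (v) differ precisely in whether $x_2$ and $Y_{11}$ vanish is exactly the right justification for the $\epsilon$-notation.
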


\begin{proof}The first two bullet points are simplified versions of equations 1 and 2 of Proposition \ref{prop:pregiveseqns}.  For the last bullet point, a simple calculation shows that $V$ and $W$ are the $(2,j)$ entries of the matrices $(Ad_{B^{-1}} \phi_1(X))$ and $Ad_{B^{-1}}Y$, respectively.  Since the $\mathfrak{p}$-component is determined by these entries, condition 3 of Proposition \ref{prop:pregiveseqns} implies that $\{V,W\}$ is linearly dependent.

\end{proof}

\section{Curvature on generalized Eschenburg spaces}\label{sec:easycurv}

In this section, we begin the analysis of the curvature of generalized Eschenburg spaces of cohomogeneity two.  The main result is the following:

\begin{proposition}\label{prop:easysummary} Suppose $(p,q_1,q_2)$ is admissible.  If $$(p,q_1,q_2)\notin \{(0,0,1),(0,1,0),  (1,1,0)\},$$ then $\mathcal{E}_{p,q_1,q_2}$ only has a nowhere dense set of points containing zero-curvature planes of the following types from Proposition \ref{prop:martin5.1}:

\begin{itemize}\item type (i), (ii), (iii), (v)

\item type (iv) with at least one $y_j \neq 0$ for some $j\geq 4$

\item type (iv) with either $V=0$ or $W=0$, which implies $y_j = 0$  for all $j\geq 4$. 

\end{itemize}  On the other hand, $\mathcal{E}_{0,0,1}$ has a non-empty open set of of points having zero-curvature planes of type (i), $\mathcal{E}_{0,1,0}$ has a non-empty open set of points having zero-curvature planes of type (ii), and $\mathcal{E}_{1,1,0}$ has a non-empty open set of points having zero-curvature planes of type (iv).
\end{proposition}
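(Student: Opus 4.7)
The plan is to analyze each type (i)--(v) of Proposition~\ref{prop:martin5.1} by evaluating the zero-curvature conditions at a parametrized base point $B = A(t,r)\in \mathcal{F}$, reducing them to explicit algebraic relations in $\cos t, \sin t, \cos r, \sin r$ together with the free parameters from Proposition~\ref{prop:martin5.1}, and showing that for each non-exceptional admissible triple the locus of $(t,r)$ admitting a valid choice of $(X, Y)$ is cut out by a nontrivial polynomial, hence nowhere dense in $\mathcal{F}$. For the three exceptional triples I instead construct explicit parameter choices depending on $(t,r)$ to realize a zero-curvature plane on an open subset of $\mathcal{F}$.

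For types (i), (ii), (iii), the vector $Y \in \mathfrak{k}$ is diagonal, so $\phi_1(Y) = \lambda_1 Y$ and condition~(1) of Proposition~\ref{prop:Kcurvature} applied to the $\mathfrak{u}(1)$-generator $(i\diag(\vec{p}), i\diag(\vec{q}))$ of $\mathfrak{l}$, combined with the commutativity condition~(2) and the $\mathfrak{u}(n-1)$-part of~(1) which together restrict $X$, reduces to a single trace equation in $B$. An explicit computation using the form of $A(t,r)$ yields $p\cos^2 t\cos^2 r = q_1$, $p\cos^2 t\sin^2 r = q_2$, and $p\cos^2 t = q_1 + q_2$ respectively. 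Each defines a nowhere dense subset of $\mathcal{F}$ unless it reduces to $0 = 0$; combined with admissibility~\eqref{eqn:gcd2} and the sign convention $q_2 \geq 0$, the only triples where the trace equation degenerates are $(0, 0, 1)$ for type~(i) and $(0, 1, 0)$ for type~(ii). The apparent type~(iii) candidate $(0, -1, 1)$ is excluded by invoking condition~(4): although the trace equation is trivial for this triple, $Ad_{A(t,r)^{-1}} Y$ has nontrivial $\mathfrak{p}$-component, and the linear dependence of this with the restricted $(Ad_{A(t,r)^{-1}} X)_\mathfrak{p}$ (coming from commutativity) imposes a nontrivial polynomial constraint on $(t, r)$.

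For types (iv) and (v), I apply Proposition~\ref{prop:giveseqns} directly. For type~(v) (with $\epsilon = 0$), the trace equations~\eqref{eqn:4.1} and \eqref{eqn:4.2} have leading coefficients in the parameters $\lambda_1\alpha$ and $\beta$ equal to $p\cos^2 t\cos^2 r - q_1$ and $p\cos^2 t\sin^2 r - q_2$; independently varying $\alpha, \beta, \im(x_3), \im(y_3)$ and separating coefficients in $(t, r)$ shows the equations cannot hold on an open set unless the triple reduces to a subcase of type~(i) or (ii). For type~(iv) with some $y_j \neq 0$ for $j \geq 4$, the relation $x_j = -ix_2 y_j$ forces matching nonzero $j$-th components of $V$ and $W$, so the $j$-th row of the linear-dependence condition contributes an additional polynomial constraint in $(t, r)$ that is nontrivial for every admissible triple. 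For type~(iv) with $y_j = 0$ for all $j \geq 4$ and $V = 0$ or $W = 0$, each vanishing vector equation itself yields nontrivial polynomial constraints in $(t, r)$ unless the triple is $(1, 1, 0)$. The exceptional cases are verified constructively: for $(0, 0, 1)$ and $(0, 1, 0)$ the type~(i)/(ii) trace equation is trivial, and suitable $X$ can be constructed on an open set; for $(1, 1, 0)$, equations~\eqref{eqn:4.1} and \eqref{eqn:4.2} together with the scalar equation $V_1 W_2 - V_2 W_1 = 0$ (all other components of $V, W$ vanish) form three real conditions on the six real parameters underlying $\alpha, \beta, x_2, y_3$, which admit solutions on an open subset of $\mathcal{F}$ with $\sin t, \cos t, \sin r, \cos r \neq 0$.

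The main obstacle is the case analysis for type~(iv): in each sub-case of the linear-dependence condition one must extract a genuinely nontrivial polynomial constraint in $(t, r)$, and confirm by a careful parameter count that $(1, 1, 0)$ is the unique admissible triple where the trace equations and the linear-dependence condition can be simultaneously satisfied on an open set. The subsidiary obstacle for type~(iii) is verifying that the additional condition~(4) genuinely cuts $(0, -1, 1)$ out on a nowhere dense set rather than identically.
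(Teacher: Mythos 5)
Your overall case structure (handling each type (i)–(v) and each sub-case of the linear-dependence condition separately, looking for nontrivial polynomial constraints in $(t,r)$) matches the paper's strategy. However, there are several concrete gaps in the argument.

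The most serious gap is the existence claim for $(1,1,0)$. A parameter count saying ``three real conditions on six real parameters, hence solutions exist on an open set'' is not a valid argument: the equations could be inconsistent, could force degeneracies like $X=0$, or could admit only boundary-of-$\mathcal{F}$ solutions. Moreover, your parameter count is wrong: the cross product $V_1 W_2 - V_2 W_1 = 0$ is a \emph{complex} equation, hence two real conditions; and for type (iv) you have not accounted for the constraints $\beta = 1 - |y_3|^2$ (equation~\eqref{eqn:4.8}) and $x_3 = -ix_2 y_3$ (equation~\eqref{eqn:4.9}), which eliminate several of the free parameters you counted. The paper instead exhibits an \emph{explicit} solution in the sub-case $W=0$: it sets $y_j=x_j=0$ for $j\geq 4$, $y_3 = i\sin t/(\cos t\sin r)$, $\beta = 1 - \sin^2 t/(\cos^2 t\sin^2 r)$, $x_2=1$, $x_3=-iy_3$, and solves~\eqref{eqn:4.2} for $\lambda_1\alpha$, verifying directly that this satisfies Proposition~\ref{prop:giveseqns} at every $(t,r)$. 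You would need to do something of that nature.

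Secondly, your disposal of $(0,-1,1)$ for type (iii) by ``invoking condition~(4)'' is unverified; you assert that the linear dependence of $(Ad_{A(t,r)^{-1}}X)_{\mathfrak{p}}$ with $(Ad_{A(t,r)^{-1}}Y)_{\mathfrak{p}}$ cuts out a proper subvariety but give no computation. The paper sidesteps this entirely by citing Wilking's theorem that $\mathcal{E}_{0,-1,1}$ is almost positively curved with this metric, which immediately implies all zero-curvature planes (of any type) occur only on a nowhere dense set. Your shortcut might work but needs to be checked; Wilking's result is the clean exit.

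Thirdly, the type (v) argument with $V = sW$, $s\neq 0$, is genuinely more intricate than your sketch suggests. After extracting the real parts (which force $\re(x_3)=\re(y_3)=0$ off a nowhere dense set) and reducing to $x_j = y_j\cot r$ from the tail components, one is left with a homogeneous $4\times 4$ linear system in $(\lambda_1\alpha, \beta, \im x_3, \im y_3)$ coming from \eqref{eqn:4.1}, \eqref{eqn:4.2}, and the imaginary parts of the two entries of $V=W$. One must compute the determinant $d$ of that system and show it is not identically zero for the non-exceptional triples; if $d\neq 0$ the unique solution is the zero vector and then the type-(v) constraint $\sum_{j\geq 3} x_j\overline{y_j}=0$ forces $X=0$. ``Separating coefficients in $(t,r)$'' does not by itself establish that the determinant is nonvanishing — one has to actually compute the coefficient of, say, $\cos^6 t$ and check when it vanishes, then handle the residual $p=0$ case. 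Similarly for type (iv) with some $y_j\neq 0$: the claim that the $j$-th row forces a nontrivial constraint for ``every admissible triple'' is false as stated (e.g.\ $(0,0,1)$ must be excluded); again the paper computes an explicit determinant and identifies precisely when it is identically zero.
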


Our general approach is to work at a point $A(t,r)\in \mathcal{F}$ and assume there is a zero-curvature plane.  Via Proposition \ref{prop:giveseqns}, this will give a set of equations in the variables $(p,q_1,q_2), (t,r)$, and the entries of matrices $X$ and $Y$ which must be satisfied.   Working with these equations, we will eventually determine a finite set of polynomial functions in $p,q_1,q_2,\cos r,\cos t,\sin r,\sin t$ with the property that at least one of them must solved by $(p,q_1,q_2,t,r)$.  For each fixed $(p,q_1,q_2)$, the zero sets of these polynomials determine Zariski closed subsets of $\mathcal{F}$.  As long as each zero set is proper, that is, as long as each polynomial is not identically zero, the union of these zero sets is nowhere dense.  


Towards proving Proposition \ref{prop:easysummary}, we begin with types (i), (ii), and (iii).

\begin{proposition}\label{prop:first3types}
    Suppose $(p, q_1, q_2) \not\in \{(0, 1, 0), (0, 0, 1)\}$ is admissible.  Then, on an open and dense set, $\mathcal{E}_{p,q_1,q_2}$ has no zero-curvature planes of type (i), (ii), or (iii).   On the other hand, $\mathcal{E}_{0,0,1}$ has type (i) zero-curvature planes on an open and dense set, while $\mathcal{E}_{0,1,0}$ has type (ii) zero-curvature planes on an open and dense set.
\end{proposition}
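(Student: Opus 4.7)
The plan is a case analysis on the three types. At any point $B = \diag(A(t,r), I_{n-2})$, I would apply Proposition~\ref{prop:Wcurvature} to a pair $(X, Y)$ in the normal form dictated by Proposition~\ref{prop:martin5.1}, and derive a necessary polynomial equation in $\cos t, \sin t, \cos r, \sin r$ for the existence of such a plane. For non-exceptional triples, this polynomial is not identically zero on $\mathcal{F}$, so by Corollary~\ref{cor:checkF} the zero-curvature locus for these types is nowhere dense.

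For type (i), with $Y = ic\,E_{11}$, condition 2 forces $X = \diag(ia, X')$ with $X' \in \mathfrak{u}(n)$; the $\mathfrak{u}(n-1)$-part of condition 1 forces the bottom $(n-1)\times(n-1)$ block of $X'$ to vanish; and the $\mathfrak{u}(1)$-part yields the scalar relation $(a-c)(p\cos^2 t\cos^2 r - q_1) + 2p\cos t\sin t\sin r\,\im(w_1) = 0$, where $w \in \mathbb{C}^{n-1}$ parameterizes the remaining entries of $X'$. Condition 4, requiring real dependence of $(Ad_{B^{-1}}X)_{\mathfrak{p}}$ and $(Ad_{B^{-1}}Y)_{\mathfrak{p}}$, produces further real equations whose solvability for $X$ not proportional to $Y$ is governed by the factor $p\cos^2 t\cos^2 r - q_1$. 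This factor vanishes identically on $\mathcal{F}$ precisely when $p = q_1 = 0$; combined with admissibility this forces $(p,q_1,q_2) = (0,0,1)$. Type (ii) is parallel, with the roles of the first two coordinates switched, producing the factor $p\cos^2 t\sin^2 r - q_2$, which is identically zero only for $(0,1,0)$. For type (iii) with $Y = ic(E_{11} + E_{22})$ both factors would need to vanish identically, forcing $p = q_1 = q_2 = 0$, which is incompatible with admissibility.

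For the exceptional triples I would exhibit explicit $(X,Y)$ pairs satisfying all conditions on an open subset of $\mathcal{F}$. For $(0,0,1)$ the scalar equation from condition 1 degenerates to $0=0$, freeing one parameter; one then chooses $X$ with non-zero entries in specific off-diagonal slots and verifies condition 4 by direct computation, exhibiting a type (i) zero-curvature plane at generic $(t,r)$. The case $(0,1,0)$ is symmetric after exchanging the first two coordinates.

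The main obstacle is the analysis of condition 4: since linear dependence in $\mathfrak{p}\cong\mathbb{C}^n$ must be taken over $\mathbb{R}$ (reflecting the real Lie algebra structure), each complex component of the dependence equation splits into two real equations, producing a real linear system whose solvability criterion is the controlling polynomial. Identifying this polynomial, checking that it factors as predicted, and verifying that its identical vanishing corresponds exactly to the listed exceptional triples is where the bulk of the computation lies.
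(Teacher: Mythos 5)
Your overall plan (reduce to a polynomial constraint on $(t,r)$, argue the constraint is not identically zero for non-exceptional triples, and exhibit explicit planes for the exceptional ones) is the right shape, but both the route and one substantive step differ from the paper. The paper's argument is much shorter: for types (i)–(iii), it applies only the horizontality condition on $Y$ (equation 2 of Proposition~\ref{prop:pregiveseqns}), which immediately gives the single scalar constraint $p\cos^2 t\cos^2 r - q_1 = 0$, $p\cos^2 t\sin^2 r - q_2 = 0$, or $p\cos^2 t - q_1 - q_2 = 0$, respectively. There is no need to invoke condition 4 of Proposition~\ref{prop:Wcurvature}, split the complex dependence equation into real components, or analyze solvability for $X$. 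You correctly surface the factors $p\cos^2 t\cos^2 r - q_1$ and $p\cos^2 t\sin^2 r - q_2$, but you arrive at them the long way around.

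More importantly, your treatment of type (iii) has a genuine gap. You assert that identical vanishing of the type (iii) constraint would ``force $p = q_1 = q_2 = 0$, which is incompatible with admissibility,'' apparently because you read the constraint as requiring both the type (i) and type (ii) factors to vanish. That is not what happens: the type (iii) constraint is the single equation $p\cos^2 t - (q_1 + q_2) = 0$, which is an identity precisely when $p = 0$ and $q_1 + q_2 = 0$. Combined with admissibility this forces $(p,q_1,q_2) = (0,-1,1)$, which is a perfectly admissible triple not on your exceptional list. So identical vanishing can in fact occur, and your argument gives no conclusion in that case. The paper closes this case by appealing to Wilking's theorem that $\mathcal{E}_{0,-1,1}$ is in fact almost positively curved, so the type (iii) zero-curvature planes (which exist at every point of $\mathcal{F}$ for this triple) cannot constitute an open set of non-positively curved points; you would need to do the same or find an alternative. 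As written your proof covers neither route.
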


\begin{proof}
    We begin by proving the first statement. Suppose $\mathcal{E}_{p,q_1,q_2}$ has a zero-curvature plane of type (i), (ii), or (iii) at some point $A(t,r)\in \mathcal{F}$.  We apply equation 2 of Proposition \ref{prop:pregiveseqns} to find that for $Y$ in type (i), (ii) or (iii),
    \begin{itemize}
        \item[i)]$p\cos^2 r\cos^2 t-q_1 = 0$, 
        \item[ii)]$p\cos^2 t\sin^2 r-q_2 = 0$, or 
        \item[iii)]$p\cos^2 t-q_1-q_2 = 0$.
    \end{itemize}

    The proposition now follows for any $(p,q_1,q_2)$ for which none of these equations is an identity, so we now consider the possibility that at least one is not an identity.

    First, if either of the first two is an identity, then we find that $p=0$ and one of $q_1$ or $q_2$ vanishes.  Admissibility then implies that $(p,q_1,q_2)\in \{(0,1,0), (0,0,1)\}$, which is a contradiction.  On the other hand, if equation (iii) is an identity, then admissibility implies $(p,q_1,q_2) = (0,-1,1)$.  However, in \cite{Wi}, Wilking showed that his metric on $\mathcal{E}_{0,-1,1}$ is almost positively curved.

    \bigskip

 We now turn attention to $\mathcal{E}_{0,0,1}$.  We use the type (i) vectors of Proposition \ref{prop:martin5.1} with $Y = \diag(i,0,...,0)$ and where $X = (x)_{ij}$ is the matrix whose only non-zero entries are $x_{23} = -\overline{x}_{32} = -i\sin r\cot t$ and $x_{33} = i\frac{(2-\cos^2 r)\cos^2 t-1}{\sin^2 t}$.  Then it is straightforward to verify that $X$ and $Y$ satisfy all the conditions of Proposition \ref{prop:Kcurvature}.

Finally, for $\mathcal{E}_{0,1,0}$, we argue analogously using type (ii) vectors.  Here, $Y = \diag(0,i,0,...,0)$ and $X=(x)_{ij}$ with the non-zero entries $x_{13} = -\overline{x}_{13} = -i\cos r\cot t$ and $x_{33} = -i$.

\end{proof}


We next turn to type (v).  Recall that from Proposition \ref{prop:giveseqns} that if there a zero-curvature plane of type (v), then $\{V,W\}$ must be linearly dependent.  We analyze the possibilities that $V= 0$ or $W = 0$ or $V = sW$ for some $s\in \mathbb{R}\setminus\{0\}$ separately, beginning with the first two.

\begin{proposition} For any admissible $(p,q_1,q_2)$, zero-curvature planes of type (v) with either $V=0$ or $W=0$ occur only on a nowhere dense set.
\end{proposition}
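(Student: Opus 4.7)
The plan is to exploit the explicit formulas for $V$ and $W$ given in Proposition \ref{prop:giveseqns}, specialized to $\epsilon=0$ (the value for type (v)), and to show that each of the vanishing conditions $V=0$ and $W=0$ forces $(t,r)$ to lie in the zero set of a nontrivial polynomial in $\cos t,\sin t,\cos r,\sin r$, hence into a nowhere dense subset of $\mathcal{F}$. The conclusion then follows from Corollary \ref{cor:checkF}.

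The starting observation is that, for $j\in\{4,\ldots,n+1\}$, the $j$-th entries of $V$ and $W$ simplify to $V_j=x_j\cos t\cos r$ and $W_j=y_j\cos t\sin r$. Hence if $V=0$ with some $x_j\neq 0$ for $j\geq 4$, then $\cos t\cos r=0$; likewise, if $W=0$ with some $y_j\neq 0$ for $j\geq 4$, then $\cos t\sin r=0$. Both loci are nowhere dense in $\mathcal{F}$.

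The remaining $V=0$ case has $x_j=0$ for all $j\geq 4$; the type (v) requirement $x\neq 0$ then forces $x_3\neq 0$. Off the nowhere dense locus $\{\sin r\cos r=0\}$, the equations $V_1=0$ and $V_2=0$ become
\begin{equation*}
    \overline{x_3}\sin t = i\lambda_1\alpha\cos r\cos t, \qquad x_3\cos^2 t+\overline{x_3}\sin^2 t = i\lambda_1\alpha\cos r\sin t\cos t.
\end{equation*}
Adding the first to its complex conjugate yields $2\re(x_3)\sin t=0$, so outside $\{\sin t=0\}$ we have $x_3=i\,\im(x_3)$. Substituting into the second equation and using the first to eliminate $\im(x_3)$, together with the identity $\cos(2t)+\sin^2 t=\cos^2 t$, the system collapses to $\lambda_1\alpha\cos r\cos^3 t=0$. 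Off the further nowhere dense locus $\{\cos t\cos r=0\}$, this forces $\alpha=0$, after which the first equation gives $x_3\sin t=0$ and hence $x_3=0$, contradicting the assumption $x_3\neq 0$.

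The case $W=0$ is handled by an entirely parallel argument, with the roles of $(\lambda_1\alpha,x_3,\cos r)$ played by $(\beta,y_3,\sin r)$; the analogous elimination produces the identity $\beta\sin r\cos^3 t=0$. Off the degenerate loci, this forces $\beta=0$, which combined with $y_3=0$ and $y_j=0$ for $j\geq 4$ gives $Y=0$, contradicting the linear independence of $\widehat{X}$ and $\widehat{Y}$. The main technical obstacle is the elimination step producing the clean polynomial identity $\lambda_1\alpha\cos r\cos^3 t=0$ (and its $W$-analog) from the two coupled complex equations; the rest of the argument is bookkeeping about which trigonometric factors vanish on nowhere dense loci.
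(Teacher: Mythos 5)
Your proof is correct and takes essentially the same route as the paper: use the last entry of $V$ (resp.\ $W$) to reduce to the case $x_j=0$ (resp.\ $y_j=0$) for $j\ge 4$, then use the first two entries to eliminate $x_3$ (resp.\ $y_3$) and derive a contradiction off a nowhere dense set. Your version is actually a bit more careful than the paper's: the paper reads off two formulas ``solving $V_1=0$ and $V_2=0$ for $x_3$,'' silently suppressing that each entry is a linear relation in both $x_3$ and $\overline{x_3}$; you instead take real parts first to establish $\re(x_3)=0$, so that the elimination step is genuinely a linear elimination in the one remaining real unknown $\im(x_3)$. The cleaner payoff is your closed-form identity $\lambda_1\alpha\cos r\cos^3 t=0$ (and its $W$-analogue), which directly forces $\alpha=0$ off the degenerate loci, whereas the paper's version leaves a residual ``non-identity polynomial in $(t,r)$'' whose proper vanishing must still be invoked. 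Both arguments yield the same nowhere-dense conclusion; yours avoids the small bookkeeping gap around the conjugate.
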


\begin{proof}


Suppose there is such a plane at a point $A(t,r)\in \mathcal{F}$.  Assume first that $V = 0$.  Then the last entry of $V$ yields $x_j\cos r\cos t = 0$, so that either $\cos r\cos t = 0$ or $x_j =0$ for all $j\geq 4$ on an open and dense set.  Excluding the nowhere dense set defined by $\cos r\cos t = 0$, we therefore may assume $x_j =0$ for all $j\geq 4$.

If we further exclude the nowhere dense subsets where $\sin t =0$ and where $\cos^2 t-\sin^2 t = 0$, we solve the first two entries of the equation $V = 0$ for $x_3$, finding

    \begin{equation}
        x_3 = \frac{i\lambda_1\alpha\cos r\cos t\sin t}{\cos^2 t-\sin^2 t}
    \end{equation}
    and
    \begin{equation}
        x_3 = \frac{i\lambda_1\alpha\cos r\cos t}{\sin t}.
    \end{equation}
    Equating these, we find that either $\lambda_1 \alpha = 0$, or a non-identity rational equation in $(t,r)$ must be satisfied.  If $\lambda_1 \alpha = 0$, then $\alpha = x_3 = 0$, thus $X = 0$, which contradicts the fact that $\{X,Y\}$ are linearly independent.  On the other hand, clearing denominators in the rational equation yields a non-identity polynomial equation which must be satisfied by $(t,r)$.

\bigskip  We now assume $W=0$.  As before, this implies $y_j = 0$ for all $j\geq 4$ off of a nowhere dense subset of $\mathcal{F}$.  Off of a nowhere dense set, we may solve the first two entries of the equation $W=0$ for $y_3$, finding 
 

    \begin{equation}
        y_3 = \frac{i\beta\sin r\cos t\sin t}{\cos^2 t-\sin^2 t}
    \end{equation}
    and
    \begin{equation}
        y_3 = \frac{i\beta\sin r\cos t}{\sin t}.
    \end{equation}

    As in the proof where $V = 0$, setting these equal to each other either yields the contradiction that $Y = 0$ or leads to a polynomial equation which must be satisfied by $(t,r)$.

\end{proof}

We now handle type (v) planes with $V = sW$ for a non-zero real number $s$.

\begin{proposition}\label{prop:casevmain}
    If $(p,q_1,q_2)$ is admissible and is not one of $(0,0,1)$ or $(0,1,0)$, then there is a zero curvature plane of type (v) with $V = sW$ for some non-zero $s \in \mathbb{R}$ only on a nowhere dense subset of points in $\mathcal{E}_{p,q_1,q_2}$.
\end{proposition}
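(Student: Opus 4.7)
Following the template of the preceding propositions, I would assume a type (v) zero-curvature plane with $V=sW$, $s\in\mathbb{R}\setminus\{0\}$, exists at some $A(t,r)\in\mathcal{F}$, and then extract from the conditions of Proposition~\ref{prop:giveseqns} (with $\epsilon=0$), together with the type (v) orthogonality constraint $\sum_{j=3}^{n+1}x_j\overline{y_j}=0$, a finite collection of polynomial identities in $(\cos t,\sin t,\cos r,\sin r)$ whose coefficients are polynomials in $(p,q_1,q_2)$. To establish the proposition it is enough to show that for every admissible $(p,q_1,q_2)\notin\{(0,0,1),(0,1,0)\}$ at least one of these polynomials is not identically zero, since its zero set is a proper Zariski-closed, hence nowhere-dense, subset of $\mathcal{F}$.

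The first step is to use the $j\geq 4$ components of $V=sW$: these read $x_j\cos t\cos r = s y_j\cos t\sin r$, so off the nowhere-dense locus $\{\cos t\cos r\sin r=0\}$ one has $x_j=(s\tan r)\,y_j$. Substituting into the orthogonality relation gives
\[
 x_3\,\overline{y_3}=-s\tan r\sum_{j\geq 4}|y_j|^2\in\mathbb{R},
\]
so that $\im(x_3)\re(y_3)=\re(x_3)\im(y_3)$. The first two components of $V=sW$ contribute four additional real equations once split into real and imaginary parts (using that $s,\alpha,\beta$ are real). Combined with equations \eqref{eqn:4.2} and \eqref{eqn:4.1}, the resulting overdetermined system can be reduced by first solving \eqref{eqn:4.2} and \eqref{eqn:4.1} for $\im(x_3)$ and $\im(y_3)$, then using the component equations of $V=sW$ and the reality relation to eliminate $\re(x_3),\re(y_3)$ and $\sum_{j\geq 4}|y_j|^2$, and finally eliminating $\alpha,\beta,s$. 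What survives are polynomial identities in just $(\cos t,\sin t,\cos r,\sin r)$ with coefficients depending on $(p,q_1,q_2)$, and I would verify in each of the degenerate subcases ($\alpha=0$, $\beta=0$, $p=0$, $x_3=0$ or $y_3=0$, or $\sum_{j\geq 4}|y_j|^2=0$) that the analogous elimination still produces such identities.

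The proof concludes by showing, for each branch of the case analysis, that identical vanishing of all the resulting coefficient polynomials in $(p,q_1,q_2)$ forces $(p,q_1,q_2)\in\{(0,0,1),(0,1,0)\}$ up to admissibility. The main obstacle I expect is the combinatorial bookkeeping and the amount of algebra: the elimination requires dividing by numerous trigonometric and parameter quantities, so the case split is unavoidable, and in every branch one must confirm that at least one coefficient polynomial is genuinely non-trivial for admissible triples outside the exceptional set. The reality of $s$ plays a decisive role throughout---it is what upgrades the two complex equations from $V=sW$ into four real ones and forces the reality of $x_3\overline{y_3}$---so preserving that reality through every substitution is the most delicate step of the argument.
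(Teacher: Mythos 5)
Your plan has the right outer strategy (split $V=sW$ into real and imaginary parts, combine with the type-(v) orthogonality constraint, and reduce to showing a polynomial identity in $(\cos t,\sin t,\cos r,\sin r)$ is non-trivial for admissible $(p,q_1,q_2)$ outside the exceptional set), but it stops short of a proof in a way that matters. The central device that makes this case tractable is never identified: after normalizing $s=1$ by rescaling $X$, the \emph{real} parts of the first two components of $V=W$ alone form a $2\times 2$ linear system in $\re(x_3),\re(y_3)$ with determinant $-1$, forcing $\re(x_3)=\re(y_3)=0$ off a nowhere dense set; and then the \emph{imaginary} parts together with \eqref{eqn:4.1} and \eqref{eqn:4.2} constitute a $4\times 4$ \emph{homogeneous linear} system in the four unknowns $(\lambda_1\alpha,\beta,\im(x_3),\im(y_3))$. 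The entire problem then reduces to computing the determinant $d$ of that system and checking it is not identically zero; when $d\neq 0$ the solution is trivial, and the type-(v) constraint $\sum_{j\geq 3}x_j\overline{y_j}=0$ together with equation \eqref{eqn:5.5} then forces $X=0$, a contradiction, off the further nowhere dense locus $\cot r=0$.

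Your elimination scheme does not exploit this structure, and as written it is not clearly executable: ``solving \eqref{eqn:4.2} and \eqref{eqn:4.1} for $\im(x_3)$ and $\im(y_3)$'' leaves $\lambda_1\alpha$ and $\beta$ as free parameters (each equation ties one of them to one imaginary part), and without normalizing $s$ the system is not linear in a fixed set of unknowns, so ``finally eliminating $\alpha,\beta,s$'' is a genuine multivariate resultant computation rather than a single determinant. The derived ``reality relation'' $\im(x_3)\re(y_3)=\re(x_3)\im(y_3)$ is also strictly weaker than the $\re(x_3)=\re(y_3)=0$ that the real-part equations already give directly, and is attributed to the reality of $s$ when it actually comes from the orthogonality constraint. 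To turn your sketch into a proof you would need to (a) rescale so $s=1$, (b) extract $\re(x_3)=\re(y_3)=0$ from the real parts, (c) recognize the remaining four real equations as a homogeneous linear system in $(\lambda_1\alpha,\beta,\im(x_3),\im(y_3))$ and compute its determinant $d$, (d) show $d$ is not identically zero unless $(p,q_1,q_2)\in\{(0,0,1),(0,1,0)\}$, and (e) use the orthogonality constraint only at the end to contradict $X\neq 0$ when $d\neq 0$ and $\cot r\neq 0$.
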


\begin{proof}

      We can rescale $X$ so that $V = W$, i.e., so that $s=1$.  The equation $V=W$ is equivalent to equations \eqref{eqn:5.3}, \eqref{eqn:5.4}, and \eqref{eqn:5.5} below.

    \begin{multline}\label{eqn:5.3}
        -i  \lambda_1 \alpha \cos\left(r\right) \cos\left(t\right) \sin\left(r\right) + \overline{x_{3}} \sin\left(r\right) \sin\left(t\right) \\
        = i  \beta \cos\left(r\right) \cos\left(t\right) \sin\left(r\right) - \overline{y_{3}} \cos\left(r\right) \sin\left(t\right)
    \end{multline}
    \begin{multline}\label{eqn:5.4}
        \lambda_1 \alpha \cos^{2}\left(r\right) \cos\left(t\right) \sin\left(t\right) + x_{3} \cos\left(r\right) \cos^{2}\left(t\right) + \overline{x_{3}} \cos\left(r\right) \sin^{2}\left(t\right) \\
        = -i \beta \cos\left(t\right) \sin^{2}\left(r\right) \sin\left(t\right) + y_{3} \cos^{2}\left(t\right) \sin\left(r\right) + \overline{y_{3}} \sin\left(r\right) \sin^{2}\left(t\right)
    \end{multline}
    \begin{equation}\label{eqn:5.5}
        x_{j} \cos\left(r\right) \cos\left(t\right) = y_{j} \cos\left(t\right) \sin\left(r\right), j \in \{4, 5,...,n+1\}
    \end{equation}

    Taking the real parts of \ref{eqn:5.3} and \ref{eqn:5.4} separately, we get the equations
    \begin{equation}\label{eqn:v.re3}
        \re (x_3) \sin\left(r\right) \sin\left(t\right) =-\re (y_3) \cos\left(r\right) \sin\left(t\right)
    \end{equation}
    \begin{equation}\label{eqn:v.re4}
        \re (x_3) \cos\left(r\right) =\re (y_3)\sin\left(r\right)
    \end{equation}

 Equations \ref{eqn:v.re3} and \ref{eqn:v.re4} tell us that $\re(x_3) = \re(y_3) = 0$ off of a nowhere dense set.   Now, the imaginary parts of \ref{eqn:5.3} and \ref{eqn:5.4} are

    \begin{multline}\label{eqn:v.im3}
        \lambda_1 \alpha \cos r \cos t \sin r - \im (x_3) \sin r \sin t \\
        = \beta \cos r \cos t \sin r + \im (y_3) \cos r \sin t
    \end{multline}
    and 
    \begin{multline}\label{eqn:v.im4}
        \lambda_1 \alpha \cos^{2}r \cos t \sin t + \im (x_3) \cos r \cos^{2} t - \im (x_3) \cos r \sin^{2} t \\
        = - \beta \cos t \sin^{2} r \sin t +\im (y_{3}) \cos^{2} t \sin r -\im (y_3) \sin r \sin^{2} t
    \end{multline}

    We now observe that \ref{eqn:4.1}, \ref{eqn:4.2}, \ref{eqn:v.im3}, and \ref{eqn:v.im4} form a homogeneous linear system of four equations in the four variables $\lambda_1\alpha, \beta, \im(x_3),$ and $\im(y_3)$.   We let $d$ denote the determinant of the matrix of coefficients of this system.  It is given by the expression
    \begin{multline}
        2  (p^{2} \sin^5 r - p^{2} \sin^3 r) \cos^6 t -2(p^{2} - p q_{1}) \sin^3 r - (p^{2} - 2  p q_{1}) \sin r) 
        \cos^4 t\\
        + 2  p q_{1} \cos^2 t \sin r - (2  (p^{2} \sin^6 r - p^{2} \sin^4 r) \cos^6 t - ((p^{2} + 2 p q_{2}) \sin^4 r \\
        - (p^{2} - 2  p q_{1} + 2 p q_{2}) 
        \sin^2 r) \cos^4 t\\ + ((p q_{1} + p q_{2}) \sin^2 r - (p - 2  q_{1}) q_{2}) \cos^2 t - q_{1} q_{2}) \sin t.
    \end{multline}

    We claim that $d$ is not identically zero.  To see this, observe that as powers of $\cos t$ are linearly independent, it is sufficient to verify that the coefficient of $\cos^6 t$ is non-zero.  This coefficient is $$p^2(-\sin^6 r + 2\sin^5 r + \sin^4 r - 2\sin^3 r),$$ which is non-zero so long as $p\neq 0$.  On the other hand, if $p=0$, then $d = q_1q_2(2\cos^2 t - 1)\sin t$, which is not identically zero unless $q_1 q_2 = 0$.  But then admissibility implies $(p,q_1,q_2) \in \{(0,0,1), (0,1,0)\}$, giving a contradiction.

    We next claim that if $d\neq 0$, then there are  zero-curvature planes of type (v) with $V=W$ only on a nowhere dense set.  Indeed, if $d\neq 0$, the corresponding linear system has a unique solution $(\lambda_1\alpha, \beta, \im(x_3), \im(y_3)) = (0,0,0,0)$.

    From equation \ref{eqn:5.5}, $y_j = x_j\cot r$ for each $j\in \{4,...,n+1\}$. Since $x_3 = y_3 = 0$, the constraint $\sum_{j=3}^{n+1}x_j\overline{y_j} = 0$ of Proposition \ref{prop:martin5.1} can be rewritten 
    $$\cot r \sum_{j=4}^{n+1} \left(\re(x_j)^2 + \im(x_j)^2\right) = 0,$$ 
    which cannot be satisfied unless $(0,x_3,...,x_{n+1})^T = \vec{0}$ 
    or $\cot r =0$.  Thus, off of the nowhere dense set where $\cot r = 0$, this implies $X = 0$, giving a contradiction.  Thus, the union of the solution set of the equations $d=0$ and $\cot r=0$ defines a nowhere dense subset of $\mathcal{F}$ containing all the points at which zero-curvature planes of type (v) with $V = sW$ can occur.

\end{proof}

We have now verified that apart from a few exceptional $(p,q_1,q_2)$, zero-curvature planes of types (i), (ii), (iii), and (v) of Proposition \ref{prop:martin5.1} can only occur on nowhere dense sets.  All that remains is verifying the same for type (iv). We will break this up into four cases, with the last taking considerably more effort to solve.

The four cases are 
\begin{enumerate}
    \item $V=0$
    
    \item  $W=0$
    
    \item  $V= sW$ for some non-zero $s\in \mathbb{R}$ and $y_j \neq 0$ for at least one $j = 4,..., n+1$, and
    \item  $V= sW$ for some non-zero $s\in \mathbb{R}$ and $y_j =0$ for all $j = 4,...,n+1$.
\end{enumerate}

We recall that in addition to all of the conditions of Proposition \ref{prop:giveseqns}, type (iv) zero-curvature also have the conditions that 

\begin{equation}\label{eqn:4.7}
    x_2\not=0,
\end{equation}
\begin{equation}\label{eqn:4.8}
    \beta = 1 - \sum_{j=3}^{n+1} |y_j|^2,
\end{equation} and 
\begin{equation}\label{eqn:4.9}
    x_j=-ix_2y_j, j\geq 3.
\end{equation}

We now handle the first case.



\begin{proposition} \label{prop:caseivsubcase1.1} Suppose $(p, q_1, q_2) \neq (0, 0, 1)$.  Then there is a zero-curvature plane of type (iv) with $V=0$ only on a nowhere dense subset of points.

\end{proposition}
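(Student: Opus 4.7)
The plan is to follow the strategy of Propositions \ref{prop:first3types} and \ref{prop:casevmain}: assume $V=0$ at $A(t,r)\in\mathcal{F}$, combine the three coordinates of $V$ with equation \eqref{eqn:4.2} of Proposition \ref{prop:giveseqns} to produce a homogeneous linear system in two real unknowns, and then show that the determinant of its coefficient matrix is a polynomial in $\cos t,\sin t,\cos r,\sin r$ that is not identically zero except when $(p,q_1,q_2)=(0,0,1)$.

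The reduction step proceeds as follows. The third coordinate $x_j\cos t\cos r$ of $V$ forces $x_j=0$ for $j\geq 4$ off the nowhere dense set $\{\cos t\cos r=0\}$; then \eqref{eqn:4.7} and \eqref{eqn:4.9} give $y_j=0$ for $j\geq 4$. Taking the real part of the second coordinate of $V=0$ yields $\re(x_3)=0$; the imaginary part, off $\{\cos(2t)=0\}$, expresses $c:=\im(x_3)$ as a linear combination of $\lambda_1\alpha$ and $b:=\im(x_2)$. Then the real part of the first coordinate, off $\{\cos t=0\}$, forces $\re(x_2)=0$, which combined with \eqref{eqn:4.7} gives $b\neq 0$.

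Substituting the formula for $c$ into the imaginary part of the first coordinate of $V=0$, and then separately into equation \eqref{eqn:4.2}, and clearing denominators in each case, produces two linear equations in the unknowns $(\lambda_1\alpha,b)$. Let $D_1$ denote the resulting $2\times 2$ determinant; it is a polynomial in $\cos t,\sin t,\cos r,\sin r$ whose coefficients depend only on $p$ and $q_1$. If $D_1(t,r)\neq 0$, the only solution is $\lambda_1\alpha=b=0$, which forces $c=0$ and hence $X=0$, contradicting $x_2\neq 0$. Equation \eqref{eqn:4.1} gives a further nonlinear constraint, but it is not needed for the argument.

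Finally, evaluating $D_1$ at the two sample points $(t,r)=(0,\pi/4)$ and $(t,r)=(0,0)$ yields (up to sign) $p/2$ and $q_1-p$, respectively. Hence $D_1\equiv 0$ forces $p=q_1=0$, which by Proposition \ref{prop:simpad} leaves only the excluded triple $(0,0,1)$. For every other admissible triple, $\{D_1=0\}$ is a proper Zariski-closed subset of $\mathcal{F}$, and together with the finitely many nowhere dense sets excluded during the reduction this shows the set of points admitting a type (iv) plane with $V=0$ is nowhere dense. The main obstacle is the algebraic book-keeping needed to bring $D_1$ into a form simple enough to evaluate at sample points; the use of identities such as $\cos(2t)+2\sin^2 t=1$ and $\cos(2t)+\sin^2 t=\cos^2 t$ is what makes this manageable.
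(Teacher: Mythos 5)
Your proof is correct and follows essentially the same strategy as the paper: both reduce the $V=0$ constraint together with equation \eqref{eqn:4.2} to a homogeneous linear system in the imaginary unknowns $(\lambda_1\alpha,\im(x_2),\im(x_3))$, compute a determinant, and observe it vanishes identically only when $p=q_1=0$. The only genuine differences are algebraic book-keeping: the paper keeps all three unknowns and computes a $3\times 3$ determinant outright, while you first use the imaginary part of the second coordinate to eliminate $\im(x_3)$ (at the cost of excluding the nowhere dense set $\{\cos 2t=0\}$, which is harmless) and then analyze a $2\times 2$ determinant via sample-point evaluations rather than a closed-form expansion. Your evaluations $D_1(0,\pi/4)=\pm p/2$ and $D_1(0,0)=\pm(q_1-p)$ check out and suffice to force $p=q_1=0$. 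Your extra observation that $y_j=0$ for $j\geq 4$ (via \eqref{eqn:4.7} and \eqref{eqn:4.9}) is true but unused, as in the paper.
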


\begin{proof}  First, by inspecting the last entry of the equation $V = 0$, we see that off of a nowhere dense set, we must have $x_j = 0$ for all $j\geq 4$.

    Considering the real and imaginary parts of the first two entries of the equation $V=0$ gives the following four equations: 
    \begin{equation}\label{eqn:iv.re3lhs}
        \re( x_2)\cos t + \re (x_3) = 0,
    \end{equation}
    \begin{equation}\label{eqn:iv.im3lhs}
        (\im(x_2) (\cos^2 r - \sin^2 r) - \lambda_1\alpha\cos r\sin r)\cos t - \im (x_3) \sin r\sin t = ,
    \end{equation}
    \begin{equation}\label{eqn:iv.re4lhs}
        \re (x_3) \cos r = 0,
    \end{equation}
    \begin{equation}\label{eqn:iv.im4lhs}
        -\cos r[(\lambda_1\alpha\cos r + 2\im(x_2) \sin r)\sin t\cos t - \im (x_3) (\cos^2 t - \sin^2 t)] = 0.
    \end{equation}
    
    From equations \ref{eqn:iv.re3lhs} and \ref{eqn:iv.re4lhs}, we find $\re(x_2) = \re(x_3) = 0$ off of a nowhere dense set.
    
    Now, we consider the system of three homogeneous linear equations \ref{eqn:iv.im3lhs}, \ref{eqn:iv.im4lhs}, and \ref{eqn:4.2} in the three unknowns $\lambda_1\alpha, \im(x_2), \im(x_3)$.  Let $d$ denote the determinant of the coefficient matrix, which has the expression

    \begin{equation}
        d = (p-2q_1)\cos^2r\cos^3t + q_1\cos^3t.
    \end{equation}

    If $d\neq 0$, then the unique solution to the system of three equations is $\lambda_1 \alpha = \im(x_2) = \im(x_3) = 0$, which then forces $X= 0$, giving a contradiction.  Thus, we must have $d=0$, giving a polynomial which must be satisfied.  Thus, the proposition is proven as long as $d = 0$ is not an identity.  The determinant is only identically $0$ when $p=q_1 = 0$.  Admissibility then implies $(p, q_1, q_2) = (0, 0, 1)$, giving a contradiction.
    
\end{proof}

We now turn to the second case, where $W=0$.

\begin{proposition} \label{prop:caseivsubcase1.2} Suppose $(p, q_1, q_2) \neq (1, 1, 0)$.  Then there is a zero-curvature plane of type (iv) with $W=0$ only on a nowhere dense subset of points.  On the other hand, when $(p,q_1,q_2) = (1,1,0)$, there is a non-empty open set of points admitting such a zero-curvature plane.
\end{proposition}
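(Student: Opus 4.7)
The plan is to follow the same strategy as in the proof of Proposition \ref{prop:caseivsubcase1.1}: use $W = 0$ (with $\epsilon = 1$) together with \eqref{eqn:4.1}, \eqref{eqn:4.8}, and \eqref{eqn:4.9} to produce a polynomial in $\sin t, \cos t, \sin r, \cos r$ that must vanish whenever such a zero-curvature plane exists, and then check that this polynomial is an identity precisely for the exceptional triple $(1,1,0)$.

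The last entry of $W = 0$ gives $y_j \cos t \sin r = 0$, so off the nowhere dense set $\cos t \sin r = 0$ we have $y_j = 0$ for all $j \geq 4$, and then \eqref{eqn:4.9} forces $x_j = 0$ for $j \geq 4$. Taking real parts of the first two entries of $W = 0$ yields $\re(y_3) = 0$, so I may write $y_3 = iu$ with $u = \im(y_3) \in \mathbb{R}$; then \eqref{eqn:4.8} becomes $\beta = 1 - u^2$. The imaginary part of the first entry of $W = 0$ now reduces to $u(\sin t - u \sin r \cos t) = 0$. In the branch $u = 0$, $\beta = 1$ and the imaginary part of the second entry collapses to $-\sin t \cos t = 0$, a non-identity polynomial, which cuts out a nowhere dense set.

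In the remaining branch $u = \tan t / \sin r$ (valid off $\sin r \cos t = 0$), a direct computation shows that the second imaginary equation becomes vacuous. Substituting this $u$ and $\beta = 1 - \tan^2 t/\sin^2 r$ into \eqref{eqn:4.1} and simplifying, I expect to obtain the key polynomial
\[ (p - q_1 - q_2)\sin^2 r \cos^2 t + q_2 \sin^2 t = 0. \]
Viewed as a polynomial in $\sin r, \sin t$ via $\cos^2 t = 1 - \sin^2 t$, this is an identity iff $p = q_1 + q_2$ and $q_2 = 0$; admissibility then forces $q_1 = 1$, giving $(p, q_1, q_2) = (1, 1, 0)$. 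For every other admissible triple, the equation cuts out a nowhere dense subset of $\mathcal{F}$, proving the first claim.

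For the converse, at $(p, q_1, q_2) = (1, 1, 0)$ I will exhibit an explicit family on the open subset of $\mathcal{F}$ where $\sin r \cos t \neq 0$ and $\cos t \cos r < 1$. Set $y_3 = i \tan t / \sin r$, $y_j = 0$ for $j \geq 4$, and $\beta = 1 - \tan^2 t / \sin^2 r$, so $W = 0$ by construction. For $X$, take $\alpha = 0$, $x_2 = 1$, $x_3 = \tan t / \sin r$ (as forced by \eqref{eqn:4.9}), and $x_j = 0$ for $j \geq 4$; since $\im(x_2) = \im(x_3) = 0$, equation \eqref{eqn:4.2} collapses to $\lambda_1 \alpha (\cos^2 t \cos^2 r - 1) = 0$, which holds. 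The pair $\{X, Y\}$ is linearly independent since $x_2 \neq 0$ while the corresponding entry of $Y$ vanishes, so by Proposition \ref{prop:giveseqns} we obtain a horizontal zero-curvature plane throughout this open set. I expect the main obstacle to be the trigonometric bookkeeping that collapses the imaginary parts of $W = 0$ and \eqref{eqn:4.1} to the displayed key polynomial, particularly the verification that the second imaginary equation becomes vacuous in the $u = \tan t/\sin r$ branch.
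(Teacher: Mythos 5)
Your proof is correct and follows essentially the same approach as the paper: it derives the same key polynomial $(p-q_1-q_2)\sin^2 r\cos^2 t + q_2\sin^2 t$ from the $W=0$ system together with \eqref{eqn:4.1}, identifies $(1,1,0)$ as the unique admissible triple making it an identity, and exhibits the same explicit family of type (iv) planes for the converse. The only organizational difference is that you substitute $\beta = 1-u^2$ from \eqref{eqn:4.8} before exploiting the $W=0$ equations, while the paper solves the two imaginary $W=0$ equations as a linear system in $(\im(y_3),\beta)$ and leaves consistency with \eqref{eqn:4.8} implicit; incidentally, your computation of the imaginary part of the first entry of $W=0$ correctly retains the $\sin t$ factor that was inadvertently dropped in the paper's displayed equation \eqref{eqn:iv.im3rhs}.
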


\begin{proof}  We begin with the first claim. Assume $(p,q_1,q_2)$ is admissible and $(p,q_1,q_2)\neq (1,1,0)$.  By inspection, the last entry of the equation $W = 0$ implies that off of a nowhere dense set, $y_j = 0$ for all $j\geq 4$.  Now, taking the real and imaginary parts of the first two entries of the equation $W=0$, we obtain the following equations.  
    \begin{equation}\label{eqn:iv.re3rhs}
        -\re (y_3) \sin t\cos r = 0,
    \end{equation}
    \begin{equation}\label{eqn:iv.im3rhs}
        \cos r((\beta - 1)\sin r\cos t + \im (y_3)) = 0,
    \end{equation}
    \begin{equation}\label{eqn:iv.re4rhs}
        \re (y_3) \sin r = 0,
    \end{equation}
    \begin{equation}\label{eqn:iv.im4rhs}
        \im (y_3) \sin r(\cos^2 t-\sin^2 t) - (\beta \sin^2 r + \cos^2 r)\sin t\cos t = 0.
    \end{equation}
    
    From \ref{eqn:iv.re3rhs} and \ref{eqn:iv.re4rhs}, we see that we must have $\re(y_3) = 0$ off of a nowhere dense set.
    Then from \ref{eqn:iv.im3rhs} and \ref{eqn:iv.im4rhs}, we find
    $$\im(y_3) = \frac{\sin t}{\cos t \sin r}\text{ and } \beta = 1-\frac{\sin^2 t}{\cos^2 t \sin^2 r}.$$
    
    Substituting these values in \ref{eqn:4.1}, we have 
    \begin{equation} \label{eqn:4.1withsubs}
        \frac{{\left({\left(p - q_{1} - q_{2}\right)} \sin^{2}r - q_{2}\right)} \cos^{2}t) + q_{2}}{\cos^{2}t \sin^{2}r} = 0.
    \end{equation}
    The proof of the first claim is complete as long as the left-hand side of \ref{eqn:4.1withsubs} is not identically zero.  If it is identically zero, then we find that $p=q_1+q_2$ and $q_2 = 0$.  Admissibility then implies $(p,q_1,q_2) = (1,1,0)$, which is a contradiction.

\bigskip
    
    We now prove the second claim, so assume $(p,q_1,q_2) = (1,1,0)$.  If we define $x_j=y_j = 0$ for all $j\geq 4$, $y_3= \frac{\sin t}{\cos t\sin r}i$, $\beta = 1-\frac{\sin^2 t}{\cos^2 t\sin^2 r}$, $x_2 = 1$, $x_3 = -iy_3$, and we define $\lambda_1 \alpha$ by solving equation \ref{eqn:4.2}, then it is is to verify that Proposition \ref{prop:giveseqns} and equations \eqref{eqn:4.7}, \eqref{eqn:4.8}, and \eqref{eqn:4.9} are satisfied for any $(t,r)$.  Hence, there is a zero-curvature plane of type (iv) at every point.

\end{proof}

We now turn to the third case, where $V=sW$ with $s$ a non-zero real number and with $y_j \neq 0$ for some $j\geq 4$.

\begin{proposition} \label{prop:caseivsubcase2} Suppose $(p,q_1,q_2)$ is admissible and not equal to $(0,0,1)$.  Then there can only be zero-curvature planes of type (iv) with $V=sW$ for a non-zero real number $s\neq 0$ and $y_j\neq 0$ for some $j\geq 4$ on a nowhere dense set.
\end{proposition}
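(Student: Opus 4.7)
The plan is to follow the template of Proposition \ref{prop:casevmain} and the earlier case (iv) analyses, exploiting the extra coupling between $X$ and $Y$ coming from \eqref{eqn:4.9}. First, I would look at the block of equations $V_j = sW_j$ for $j \geq 4$. After cancelling a common factor of $\cos t$ (away from the nowhere-dense locus $\cos t = 0$) and substituting $x_j = -ix_2 y_j$, picking any $j \geq 4$ with $y_j \neq 0$ yields $x_2 = is\tan r$. Since rescaling $\widehat{X}$ does not change the plane, I may rescale $X$ so that $s = 1$, giving $x_2 = i\tan r$ and $x_3 = \tan r \cdot y_3$.

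Second, I would substitute these relations into the first two entries of $V = W$ and take real and imaginary parts. The real part of $V_1 = W_1$ quickly forces $\re(y_3) = 0$ (off a nowhere-dense set), so both $V_2$ and $W_2$ become purely imaginary. Equating their imaginary parts produces the linear relation $(\mu - 1)\cos^2 r = (\beta - 2)\sin^2 r$ where $\mu := \lambda_1 \alpha$, and using this, the imaginary part of $V_1 = W_1$ simplifies to $\im(y_3)\sin t = (1-\beta)\sin r \cos t$. Plugging all of these into \eqref{eqn:4.1} and \eqref{eqn:4.2} yields two further equations
\[p\cos^2 t\cos^2 r\,(2-\mu) = \beta q_2 + q_1 \quad \text{and} \quad p\cos^2 t\cos^2 r\,(2-\mu) = \mu q_1,\]
which together force $(\mu - 1)q_1 = \beta q_2$.

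Finally, eliminating $\mu$ and $\beta$ from these three linear relations produces a single polynomial identity in $(t,r)$:
\[p\cos^2 t\cos^2 r\bigl[q_2\cos^2 r + (2q_2-q_1)\sin^2 r\bigr] = q_1\bigl[q_2\cos^2 r - (q_1+2q_2)\sin^2 r\bigr].\]
Because the left-hand side carries a factor of $\cos^2 t$ but the right-hand side is independent of $t$, both sides must vanish identically for this to be a polynomial identity in $(t,r)$. A short case analysis, using that admissibility precludes $q_1 = q_2 = 0$, then forces $(p, q_1, q_2) = (0, 0, 1)$. For any other admissible triple, the displayed equation cuts out a proper algebraic subset of $\mathcal{F}$, which is nowhere dense. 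The main technical obstacle will be bookkeeping the excluded nowhere-dense loci (such as $\sin t = 0$ and $\cos r = 0$) accumulated at each substitution step, as well as verifying carefully that the three linear relations in $(\mu,\beta)$ are genuinely overdetermined (and not accidentally dependent for some $(p,q_1,q_2)$ outside of $(0,0,1)$); however, the coefficient structure above makes the check routine.
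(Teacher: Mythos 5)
Your proposal is correct, and it follows the same overall strategy as the paper (reduce to an overdetermined linear system via the substitution $x_2 = i\tan r$, $x_3 = y_3\tan r$ forced by $V_j = sW_j$ for some $j \ge 4$ together with $x_j = -ix_2 y_j$), but the execution is genuinely different and arguably cleaner. The paper forms a $4\times 4$ augmented matrix from the imaginary parts of $V_1 = W_1$, $V_2 = W_2$, \eqref{eqn:4.1}, and \eqref{eqn:4.2} in the unknowns $\lambda_1\alpha$, $\beta$, $\im(y_3)$, computes its determinant $d$, and reads off that $d \equiv 0$ only for $(0,0,1)$. You instead eliminate explicitly: the real part of $V_1 = W_1$ forces $\re(y_3) = 0$; the imaginary part of $V_2 = W_2$ gives $(\mu-1)\cos^2 r = (\beta-2)\sin^2 r$ (with $\mu = \lambda_1\alpha$), which feeds into the imaginary part of $V_1 = W_1$ to produce $\im(y_3)\sin t = (1-\beta)\sin r\cos t$; and after substituting these into \eqref{eqn:4.1} and \eqref{eqn:4.2} both reduce to the same left-hand side $p\cos^2 t\cos^2 r(2-\mu)$, yielding $(\mu-1)q_1 = \beta q_2$. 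Solving the two $r$-only relations for $\mu,\beta$ by Cramer's rule (off the nowhere-dense locus $q_2\cos^2 r = q_1\sin^2 r$) and substituting into the $t$-dependent equation gives your displayed identity, whose failure to be an identity is easy to read off via the $\cos^2 t$ factorization. I checked each of these intermediate relations and the final case analysis, and they are all correct; this buys a much cleaner final polynomial than the paper's determinant $d$, and makes the $(0,0,1)$ case analysis transparent.

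The one point worth tightening is the one you already flag: you must record that the Cramer elimination step needs $q_2\cos^2 r - q_1\sin^2 r \neq 0$ (nowhere dense since $(q_1,q_2)\neq(0,0)$ by admissibility), in addition to the earlier exclusions $\sin t\cos t = 0$, $\sin r\cos r = 0$, and $\cos t = 0$. Once those are made explicit, the argument is complete.
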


\begin{proof}  After rescaling $X$, we may assume $s=1$.  We also assume without loss of generality that $y_4\neq 0$.  Then the equation $V=W$ yields the following equations.

    \begin{multline}\label{eqn:4.3}
        ( x_2 \cos^2 r + \overline{x_2} \sin ^2 r - i\lambda_1\alpha \cos r \sin r ) \cos t + \overline{x_3} \sin r \sin t =\\ \cos r ((\beta - 1)i \sin r \cos t - \overline{y_3} \sin t)
    \end{multline}
    \begin{multline}\label{eqn:4.4}
        \cos r [x_3 \cos^2 t + (-i\lambda_1\alpha \cos r + (\overline{x_2}-x_2)\sin r ) \sin t \cos t +\overline{x_3} \sin ^2 t] \\
        = y_3 \cos ^2t \sin r - i(\beta \sin^2r +\cos^2r)\sin t \cos t +\overline{y_3} \sin^2 t \sin r
    \end{multline}
    \begin{equation}\label{eqn:4.5}
        x_4 \cos t \cos r = y_4 \cos t \sin r
    \end{equation}

     Off of a nowhere dense set, equations \eqref{eqn:4.5} and  \ref{eqn:4.9} imply $x_2 = i\tan r$ and $x_3 = y_3\tan r$. Then we can substitute $x_3 = y_3\tan r$ into equations \ref{eqn:4.2}, \ref{eqn:4.3}, and \ref{eqn:4.4}. Taking the imaginary parts of equations \ref{eqn:4.3} and \ref{eqn:4.4} together with \ref{eqn:4.1} and \ref{eqn:4.2}, we have a system of four linear equations with the three unknown values $\lambda_1\alpha, \beta,$ $ \im(y_3)$. Let $d$ denote the determinant of the $4\times 4$ matrix obtained by augmenting the coefficient matrix of this system with an extra column of the values of the system. 
    The $d$ is given by $-\tan^2(r)$ multiplied by the following expression:

    \begin{multline}
        2 \cos^{5} t (p^{2} \cos r - p^{2}) (\sin^6 r -  \sin^4 r) + (4  (p q_{1} + p q_{2} - (p q_{1} + p q_{2}) \cos r) \sin^6 r\\ - (2  p q_{1} + 6  p q_{2} - (p q_{1} + 7  p q_{2}) \cos r) \sin^4 r - p q_{2} \cos r\\ + (2 p q_{2} + (p q_{1} - 2  p q_{2})\cos r) \sin^2 r)\cos t^{3} + (q_{1} q_{2} \cos r \\- (2  q_{1} q_{2} + (q_{1}^{2} + q_{1} q_{2}) \cos r)\sin^2 r) \cos t.
    \end{multline}
    
    In order for this over-determined system to have a solution, it is necessary and sufficient that $d=0$.  Thus, as long as $d$ is not identically zero, we have proven the proposition.

    The coefficient of $\cos^5 t$ is $2p^2(\cos r-1)(\sin^6 r-\sin^4 r)$ which is only identically zero if $p=0$.  Moreover, when $p=0$, we find $$d = -q_1\tan^2 r \cos t ( q_{2} \cos r - (2  q_{2} + (q_{1} +  q_{2}) \cos r)\sin^2 r), $$ which is not an identity unless $q_1 = 0$.  But if $p=q_1 = 0$, then admissibility implies $(p,q_1,q_2) = (0,0,1)$, which is a contradiction.

\end{proof}

The remaining case of type (iv) zero-curvature planes, where $V=W$ and $y_j = 0$ for all $j\geq 4$, is considerably more difficult than the remaining cases and will be postponed until later.  We are, however, ready to show that infinitely many $\mathcal{E}_{p,q_1,q_2}$ are almost positively curved.

\begin{proposition}\label{prop:newexamples} Suppose $(p,q_1,q_2)$ is admissible with both $q_1 + q_2< 0$ and $p\in [q_1+q_2,q_2]$.  Then Wilking's metric on $\mathcal{E}_{p,q_1,q_2}$ is almost positively curved.

\end{proposition}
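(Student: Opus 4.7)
The plan is to reduce via Corollary \ref{cor:checkF} to showing that points of $\mathcal{F}$ admitting some zero-curvature plane form a nowhere dense set. Since $q_1+q_2 < 0$ rules out the three exceptional triples $(0,0,1)$, $(0,1,0)$, and $(1,1,0)$, Propositions \ref{prop:first3types}, \ref{prop:caseivsubcase1.1}, \ref{prop:caseivsubcase1.2}, \ref{prop:caseivsubcase2}, and \ref{prop:casevmain} together already handle zero-curvature planes of types (i), (ii), (iii), (v), and every subcase of type (iv) except one: those with $V = sW$ for some nonzero real $s$ and $y_j = 0$ for all $j \geq 4$. All the remaining work therefore concentrates on this last subcase.

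In that subcase, the relation $x_j = -ix_2 y_j$ forces $x_j = 0$ for $j \geq 4$ as well, and $\beta = 1 - |y_3|^2$. Writing $a = \re(y_3)$, $b = \im(y_3)$, $U = \cos^2 t$, and $V = \sin^2 r$, I would substitute $\beta = 1 - a^2 - b^2$ into equation \eqref{eqn:4.1} and rearrange to
\[
(a^2 + b^2)\,c \;=\; d + e\,b,
\]
with $c = pUV - q_2$, $d = pU - q_1 - q_2$, and $e = 2p\sqrt{U(1-U)V}$. For fixed $(U,V)$, a real pair $(a,b)$ solves this precisely when either $c = 0$ (a proper curve in $\mathcal{F}$) or $c \neq 0$ and the quadratic $cb^2 - eb - d$ admits a real zero in $b$, equivalently $e^2 + 4cd \geq 0$.

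A direct expansion yields
\[
e^2 + 4cd \;=\; 4\bigl[\,pU\bigl(V(p-q_1-q_2) - q_2\bigr) + q_2(q_1+q_2)\,\bigr],
\]
which is affine in $U$; restricted to $U = 1$, it is also affine in $V$. At $U = 0$ it equals $4q_2(q_1+q_2)$, negative since $q_2 > 0$ and $q_1+q_2 < 0$. At $U = 1$, its endpoint values in $V$ reduce to $4q_2(q_1+q_2-p)$ and $4(q_2-p)(q_1+q_2-p)$, both nonpositive because $q_1+q_2 \leq p \leq q_2$. By affine interpolation in each variable, $e^2 + 4cd \leq 0$ throughout $[0,1]^2$. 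Moreover, the constant term $4q_2(q_1+q_2)$ is strictly negative, so $e^2 + 4cd$ is not identically zero as a polynomial in $U, V$; consequently its vanishing locus, together with $\{c=0\}$, is a proper algebraic subset of $\mathcal{F}$ and therefore nowhere dense.

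Combining this with the earlier propositions, every point of $\mathcal{F}$ off a fixed nowhere dense subset admits no zero-curvature plane of any type, so Corollary \ref{cor:checkF} gives the claim. The main obstacle is the corner-by-corner sign verification of $e^2 + 4cd$ on $[0,1]^2$; it is exactly here that the hypothesis $q_1+q_2 \leq p \leq q_2$ together with $q_1+q_2 < 0$ is invoked, while everything else is a bookkeeping reduction against results already established in this section.
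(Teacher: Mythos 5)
Your proposal is correct and follows the paper's blueprint: reduce via Propositions \ref{prop:easysummary} and \ref{cor:checkF} to type (iv) planes with $V = sW$ and $y_j = 0$ for $j\geq 4$, then show that the two necessary conditions \eqref{eqn:4.1} and \eqref{eqn:4.8} are jointly infeasible off a nowhere dense set by analyzing a discriminant. Your quantity $e^2 + 4cd$ is exactly the paper's $k(r,t)$ (the paper derives it by solving \eqref{eqn:4.1} for $\beta$, substituting into \eqref{eqn:4.8}, and viewing the result as a quadratic in $\im(y_3)$, then clearing denominators). Where you genuinely diverge is in \emph{verifying} $k\leq 0$ on $[0,1]^2$: the paper checks the value of $k$ on each boundary segment of $[0,\pi/2]^2$ and then rules out interior critical points via partial derivatives, whereas you pass to coordinates $U=\cos^2 t$, $V=\sin^2 r$, observe that $k$ is affine in $U$, that $k|_{U=1}$ is affine in $V$, and that $k$ is nonpositive at the three relevant corners --- so affine interpolation gives nonpositivity globally. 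Your route is shorter and more transparent. One minor gloss to tighten: ``the quadratic $cb^2 - eb - d$ admits a real zero'' is indeed equivalent to $e^2 + 4cd \geq 0$, but the more direct observation is that $a^2 = -\frac{cb^2 - eb - d}{c}$ has a nonnegative right-hand side for some $b$ if and only if $e^2 + 4cd \geq 0$, a sign check that works regardless of the sign of $c$; it is worth stating this explicitly since having a real root of the quadratic does not by itself produce the needed sign. With that small clarification, the argument is complete.
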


\begin{note}There are infinitely many admissible triples $(p,q_1,q_2)$ which satisfy the hypothesis of Proposition \ref{prop:newexamples}.  For example, one can take $(p,q_1,q_2) = (2,1 -6k,3)$ for any $k\geq 1$.  In particular, Proposition \ref{prop:newexamples} yields infinitely many almost positively curved manifolds in each dimension of the form $4n-1$ with $n\geq 3$.

\end{note}

\begin{proof}  The assumptions in $p,q_1$, and $q_2$ imply that $$(p,q_1,q_2)\notin \{(0,0,1), (0,1,0), (1,1,0)\},$$ so Proposition \ref{prop:easysummary} implies that the only possible zero-curvature planes which can exist on a non-empty open set are those of type (iv) with $V=W$ and with $y_j = 0$ for all $j\geq 4$.

We will show that equations \eqref{eqn:4.1} and \eqref{eqn:4.8} are incompatible off of a nowhere dense set.  Solving \eqref{eqn:4.1} for $\beta$, we find $\beta = h(r,t) \im(y_3) + j(r,t)$ where $$h(r,t) = \frac{-2p \cos t \sin t \sin r}{p \cos^2 t \sin^2 r - q_2} \text{ and } j(r,t) = \frac{q_1 - p \cos^2 t \cos^2 r}{p \cos^2 t \sin^2 r - q_2}.$$  We observe that the denominator in these expressions is not identically zero:  if it were, then $p=q_2 = 0$, and admissibility implies $(p,q_1,q_2) = (0,1,0)$, yielding a contradiction.

Substituting this formula for $\beta$ into equation \eqref{eqn:4.8}, we find $$ h(r,t) \im(y_3) + j(r,t) = 1- \im(y_3)^2-\re(y_3)^2.$$  This equation is clearly equivalent to \begin{equation}\label{eqn:shownegative} \re(y_3)^2 = -\im(y_3)^2 + (-1-h(r,t))\im(y_3) + 1-j(r,t).\end{equation}

We will show that off of a nowhere dense set, the right hand side of \eqref{eqn:shownegative} is negative.  Since the left side is clearly non-negative, this will show that these kinds of type (iv) zero-curvature planes occur only on a nowhere dense set.

We view the right hand side as a quadratic polynomial in $\im(y_3)$.  The  leading coefficient is negative, so the right hand side is negative if and only if the discriminant $d(r,t)$ is negative 
off of a nowhere dense set.  Of course, $d(r,t)$ is negative on such a set if and only if $k(r,t):=d(r,t)(p\cos^2 t\sin^2 r-q_2)^2$ is as well.

A simple calculation reveals that \begin{equation}\label{eqn:check2}k(r,t) = 4p((p - q_1-
q_2)\sin^2 r - q_2)\cos^2 t + 4q_2(q_1 + q_2). \end{equation}

If $k(r,t)$ is a constant, then $q_2 =0$.  Admissibility then implies that $q_1 \geq 0$, which contradicts the fact that $q_1 + q_2 < 0$.  Thus, $k(r,t)$ is non-constant.  So, if we show that $k\leq 0$ for all $(r,t)$, it will follow that $k < 0$ off of a nowhere dense set, completing the proof.

We will prove that the maximum value of $k$, which must either occur on the boundary of $[0,\pi/2]\times [0,\pi/2]$ or at a interior critical point, is non-positive.  We begin by checking $k$ on the boundary.

We note that $k(0,t) = 4q_2( -p\cos^2 t +q_1 + q_2)$.  As $q_2 \geq 0$ and $q_1 + q_2 \leq p$, it follows that $k(0,t)\leq 0$ for all $t\in [0,\pi/2]$.

Similarly, $k(\pi/2,t)$ is equal to $$4p(p-q_1-2q_2)\cos^2 t  + 4q_2(q_1+q_2) = 4(q_2-p\cos^2 t)(q_1+q_2) + 4p(p-q_2)\cos^2 t.$$   The maximum of this expression occurs either when $t\in \{0,\pi/2\}$ or when its derivative vanishes.  But $k(\pi/2,0) = (p-q_2)(p-(q_1+q_2))\leq 0$ and $k(\pi/2,\pi/2) = q_2(q_1+q_2)\leq 0$.  The derivative of $k(\pi/2,t)$ is $-2p \cos t\sin t(p-q_1-2q_2)$.  This can only vanish for $t\in (0,\pi/2)$ if $p = 0$ or $p - q_1-2q_2 = 0$.  Both possibilities lead to $k(\pi/2,t) = q_2(q_1 + q_2)\leq 0$.

We also observe that $k(r,0)$ is equal to 
$$4p( (p-q_1-q_2)\sin^2 r - q_2) + 4 q_2(q_1 + q_2) = 4(-p\sin^2 r+q_2)(q_1+q_2) + 4p(p\sin^2 r - q_2),$$ 
which is a sum of non-positive terms and therefore is non-positive.

Finally, $k(r,\pi/2) = 4q_2(q_1+q_2)\leq 0$.  Thus, $k\leq 0$ on the boundary of $[0,\pi/2]\times [0,\pi/2]$.

We now investigate critical points in the interior of the rectangle.  We compute that the partial derivatives are given by $$k_r = 8p(p-q_1-q_2)\sin r\cos r\cos^2 t$$ and $$ k_t = -8p((p-q_1-q_2)\sin^2 r - q_2)\sin t\cos t. $$

Since both $r,t\in (0,\pi/2)$, there are no interior critical points unless $p=0$ or both $p=q_1 + q_2$ and $q_2 = 0$.  Both in both of these cases, we find $k(r,t) = 4q_2(q_1+q_2)\leq 0$.

\end{proof}

\section{The degree 4 polynomial}\label{sec:poly}

In the next two sections, we analyze the fourth and final case of type (iv), where $V = sW$ for some non-zero $s \in \mathbb{R}$ and $y_j=0$ for all $j \geq 4$. We will rely on the polynomial $f$, defined below, for our results in this case. Proposition \ref{prop:fpurpose} explains the importance of this polynomial. 

For each admissible $(p,q_1,q_2)$, we define the polynomial $f_{p,q_1,q_2}(x,y) := \sum_{i,j=0}^4 c_{i,j} x^i y^j$ where 
\begin{itemize}\item $c_{4,4} = -p^2 (q_1-q_2)^2$
\item $c_{4,j} = 0$ for $j=0,1,2,3$
\item $c_{3,4} = -2p^2(p-q_1 - 2q_2)(q_2-q_1)$
\item $c_{3,3} = -2p(q_1-q_2)(2p^2-5pq_1 - 3pq_2 + q_1^2+3q_1 q_2)$
\item  $c_{3,j} = 0$ for $j=0,1,2$
\item $c_{2,4} = -p^2(p-q_1-2q_2)^2ce$
\item $c_{2,3} = 2p(p-q_1-2q_2)(3pq_1 - p_1q_2 - 6q_1^2-2q_1q_2)$
\item $c_{2,2} =-p^2q_1^2 + 6p^2q_1q_2 - p^2q_2^2 - 4pq_1^3 - 4pq_1q_2^2 - q_1^4 - 6q_1^3q_2 - 9q_1^2q_2^2$
\item $c_{2,j} = 0$ for $j=0,1$
\item $c_{1,j} = 0$ for $j=0,4$
\item $c_{1,3} = -2 p q_1(p-q_1-2q_2)^2$
\item $c_{1,2} = 2q_1(2p + q_1)(q_1-q_2)(p-q_1-2q_2)$
\item $c_{1,1} = 2q_1(q_1+q_2)(pq_1 - pq_2 + q_1^2+q_1q_2)$
\item $c_{0,j}= 0$ for $j=3,4$.
\item $c_{0,2} = -q_1^2(p-q_1-2q_2)^2$
\item $c_{0,1} =-2q_1^2(q_1+q_2)(p-q_1-2q_2)$
\item $c_{0,0} = -q_1^2(q_1+q_2)$

\end{itemize}

With $f$ defined, we now explain its importance with the following result. 

\begin{proposition}\label{prop:fpurpose} Suppose $(p,q_1,q_2)$ is admissible and is not one of 
\begin{enumerate}\item $(0,1,0)$
\item $(0,0,1)$

\item $(1,1,0)$

\item $(0,-1,1)$.
\end{enumerate}  Then the Wilking metric on $\mathcal{E}_{\vec{p},\vec{q}}$ has almost positive curvature if and only if $f_{p,\vec{q}}(x,y) \leq 0$ for all $x,y \in [0,1]$.
\end{proposition}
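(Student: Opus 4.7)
The plan is to reduce, via Proposition \ref{prop:easysummary} and the earlier case analyses, the question of almost positive curvature to the single remaining subcase of type (iv), and then to identify its real-solvability condition at $(r,t) \in \mathcal{F}$ with the inequality $f_{p,q_1,q_2}(x,y) \geq 0$ for an appropriate substitution $x,y \in [0,1]$. Under the four exclusions listed in the proposition, the hypotheses of Propositions \ref{prop:easysummary}, \ref{prop:first3types}, \ref{prop:caseivsubcase1.1}, \ref{prop:caseivsubcase1.2}, and \ref{prop:caseivsubcase2} all apply, so every type of zero-curvature plane except type (iv) with $V = sW$ (for some $s \in \mathbb{R}\setminus\{0\}$) and $y_j = 0$ for all $j \geq 4$ contributes only a nowhere dense subset of $\mathcal{F}$; the exclusion of $(0,-1,1)$ is needed precisely to bypass the type (iii) open-set case flagged in Proposition \ref{prop:first3types}, where the result is already available via Wilking's direct argument in \cite{Wi}. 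Thus, by Corollary \ref{cor:checkF}, the Wilking metric is almost positively curved if and only if this remaining subcase also occurs only on a nowhere dense subset of $\mathcal{F}$.

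To analyze the remaining subcase I would substitute the type (iv) relations $x_j = -ix_2 y_j$ and $\beta = 1 - |y_3|^2$ from \eqref{eqn:4.8} and \eqref{eqn:4.9}, together with $y_j = 0$ for $j \geq 4$, into the three conditions of Proposition \ref{prop:giveseqns} (taking $\epsilon = 1$). The lower-index entries of the linear-dependence condition $V = sW$ then hold automatically, and splitting real and imaginary parts of the top two entries, together with \eqref{eqn:4.1} and \eqref{eqn:4.2}, gives a real inhomogeneous linear system in the unknowns $\lambda_1\alpha$, $\re(x_2)$, $\im(x_2)$, $\re(y_3)$, $\im(y_3)$, and $s$, with coefficients that are rational in $\cos r, \sin r, \cos t, \sin t$ and polynomial in $(p,q_1,q_2)$. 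Away from a nowhere dense locus the denominators that appear (of shapes such as $p\cos^2 t\sin^2 r - q_2$, whose non-vanishing is secured by the exclusion of $(0,0,1)$ and $(0,1,0)$, exactly as in the proof of Proposition \ref{prop:newexamples}) are nonzero, so the system can be solved for $\lambda_1\alpha$, $x_2$, $s$, and $\re(y_3)$ in terms of $\im(y_3)$ alone, and the constraint \eqref{eqn:4.7} that $x_2 \neq 0$ is preserved off another nowhere dense set.

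Substituting these back into the sphere constraint $\re(y_3)^2 + \im(y_3)^2 = 1 - \beta$ produces a single quadratic in $\im(y_3)$ whose real-solvability is governed by the sign of its discriminant. Clearing a strictly positive denominator yields a polynomial $D$ in two variables $x, y$ of bidegree at most $(4,4)$; choosing the substitution $x = \sin^2 r$, $y = \sin^2 t$ (or the closely related pair forced by the algebra) and matching coefficients term-by-term against the list defining $f_{p,q_1,q_2}$ will identify $D$ with $f_{p,q_1,q_2}$ up to a fixed sign. Hence real solvability at $(r,t)$ corresponds to $f_{p,q_1,q_2}(x,y) \geq 0$, and the set of points of $\mathcal{F}$ admitting a zero-curvature plane of this remaining type is nowhere dense if and only if $f_{p,q_1,q_2} \leq 0$ on $[0,1]^2$ (since a polynomial non-positive on a dense subset of a product of intervals is non-positive on the closure). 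Combined with the reduction in the first paragraph and Corollary \ref{cor:checkF}, this yields the stated equivalence.

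The main obstacle will be the linear elimination and subsequent discriminant calculation: while the steps are structurally routine, the algebra is heavy and the matching of the resulting polynomial with the tabulated coefficients of $f_{p,q_1,q_2}$ is best carried out by computer algebra. Care must also be taken to verify that each denominator cleared along the way is not identically zero under the four exclusions, so that the reductions are legitimate off a nowhere dense set; the admissibility hypothesis \eqref{eqn:gcd2} together with the four exclusions is exactly what makes each such denominator a non-identity polynomial.
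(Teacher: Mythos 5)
The broad plan---reduce via Proposition \ref{prop:easysummary} to the one remaining subcase (type (iv), $V=sW$, $y_j=0$ for $j\geq 4$), then translate real-solvability at $(r,t)$ into a polynomial inequality and match it with $f_{p,q_1,q_2}$---is the paper's plan. But several concrete steps in the middle are not quite right.

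First, the role you assign to the exclusion of $(0,-1,1)$ is wrong. Proposition \ref{prop:easysummary} and Proposition \ref{prop:first3types} only need the exclusions $(0,0,1)$, $(0,1,0)$, $(1,1,0)$; in the paper, $(0,-1,1)$ is not excluded by those results (the type (iii) possibility for $(0,-1,1)$ is disposed of by citing Wilking, not by appearing on the exclusion list). The actual reason $(0,-1,1)$ must be excluded in this proposition is that it is the one admissible triple for which $y_3=0$ leads to a consistent open-set solution: if $y_3=0$ then \eqref{eqn:4.8} forces $\beta=1$ and \eqref{eqn:4.1} becomes $p\cos^2 t - q_1 - q_2 = 0$, which is an identity exactly for $(0,-1,1)$. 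This non-vanishing of $y_3$ is the pivot on which the whole algebraic elimination turns (one has to divide by $y_3$), so the omission of this point is a genuine gap in your argument.

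Second, the elimination is not a ``real inhomogeneous linear system'' in the unknowns you list. Because $\beta = 1-|y_3|^2$ and $x_j = -ix_2y_j$ enter quadratically, and because you leave $s$ as an unknown (which multiplies $y_3$ etc.), the equations are nonlinear; the paper first normalizes $s=1$ by rescaling $X$, and then makes a specific combination (\eqref{eqn:step0}) of \eqref{eqn:eqn3} and \eqref{eqn:eqn4} to isolate $x_2$, after which a nontrivial factorization (rationalizing by $\frac{y_3}{y_3}$ and using $|y_3|^2 = 1-\beta$) gives the closed forms \eqref{eqn:step2}--\eqref{eqn:step5}. This structure matters; a naive ``solve the linear system'' step as you describe would stall on the nonlinear couplings.

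Third, your claim that the elimination determines $\re(y_3)$ as a function of $\im(y_3)$, producing a quadratic in $\im(y_3)$ whose discriminant controls solvability, does not match the algebra. The real part of \eqref{eqn:eqn3} is the tautology $-\re(y_3)\sin t\cos r = -\re(y_3)\sin t\cos r$, so $\re(y_3)$ is not constrained by any linear relation. What actually happens is that the imaginary part of \eqref{eqn:eqn3} determines $\im(y_3)$ uniquely as a rational function of $(r,t)$, and then the only surviving constraint is the nonlinear one \eqref{eqn:4.8}, which reads $\re(y_3)^2 = 1-\beta - \im(y_3)^2$; real-solvability is then the condition that the right side is non-negative, not a discriminant of a quadratic. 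The numerator of the right side, after the substitution $x=\cos^2 r$, $y=\cos^2 t$ (not $\sin^2$), is $f_{p,q_1,q_2}(x,y)$. A discriminant-in-$\im(y_3)$ argument is the mechanism used in Proposition \ref{prop:newexamples} (which deliberately ignores \eqref{eqn:eqn3}), and it gives only a sufficient condition; it would not recover the if-and-only-if statement of this proposition.
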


\begin{proof} From Proposition \ref{prop:easysummary}, we know that under the hypothesis on $(p,q_1,q_2)$, this proposition is true if and only if the following kinds of zero-curvature planes only occur on a nowhere dense set:  type (iv) with $V=sW$ and $y_j = 0$ for all $j\geq 4$. Thus, for the duration of the proof, we restrict our attention to planes of this special form.

As in the proof of Proposition \ref{prop:caseivsubcase2}, we can rescale $X$ so that $s = 1$ and $V=W$. The last entry of the equation $V=W$ gives $x_j = 0$ for $j\geq 4$.  The first two entries give the following two equations.

\begin{multline}\label{eqn:eqn3} (x_2\cos^2 r + \overline{x}_2\sin^2 r -i \lambda_1\alpha\cos r\sin r)\cos t + \overline{x}_3\sin r \sin t \\= \cos r(i(\beta-1) \sin r \cos t - \overline{y}_3 \sin t)\end{multline}

\begin{multline} \label{eqn:eqn4} \cos r(x_3\cos^2 t + \overline{x}_3\sin^2 t + \sin t\cos t(-i\lambda_1 \alpha \cos r -2i \im (x_2)\sin r )) \\= y_3\cos^2 t \sin r +  \overline{y}_3\sin^2 t\sin r - i\sin t(\beta\sin^2 r+\cos^2 r)\cos t\end{multline}

We also recall that equations \eqref{eqn:4.8} and \eqref{eqn:4.9} must be satisfied.

To begin the analysis of these equations, we multiply both sides of \eqref{eqn:eqn3} by $\cos r\sin t$ and both sides of \eqref{eqn:eqn4} by $\sin r$ and subtract.  This gives \begin{multline}\label{eqn:step0} \cos t\cos r (-x_3\cos t\sin r + x_2\sin t)\\ = i\beta\cos t\sin t\sin r - y_3 \cos^2 t\sin^2 r- y_3\sin^2 t.\end{multline}

Using \eqref{eqn:4.9} to replace $x_3$ and then solving for $x_2$, we find \begin{equation}\label{eqn:step1} x_2 = - \frac{i y_3\cos^2 t\sin^2 r + i\overline{y}_3\sin^2 t + \beta\cos t\sin t \sin r }{\cos r\cos t(-y_3\cos t\sin r + i\sin t)}.\end{equation}

If $y_3 = 0$, then \eqref{eqn:4.8} implies $\beta = 1$, and $\eqref{eqn:4.1}$ reduces to $p\cos^2 t - q_2 -q_1 = 0$, which is not an identity unless $p=0$ and $q_1 = -q_2$.  Admissibility implies that $(q_1,q_2) = (-1,1)$, giving the triple $(p,q_1,q_2) = (0,-1,1)$.  Since, by hypothesis, $(p,q_1,q_2)\neq (0,-1,1)$, we conclude that $y_3\neq 0$.  Multiplying the right hand side of \eqref{eqn:step1} by $1  = \frac{y_3}{y_3}$ and using \eqref{eqn:4.8} to replace $|y_3|^2$ with $1-\beta$, we find 
$$x_2 = - \frac{iy_3^2\cos^2 t\sin^2 r + i\sin^2 t (1-\beta) +\beta y_3 \cos t\sin t \sin r}{y_3\cos r\cos t(-y_3\cos t\sin r + i\sin t)}.$$  
The numerator factors as $(-y_3\cos t\sin r + i\sin t)(-iy_3\cos t\sin r -(\beta-1)\sin t)$, which implies $$ x_2 = \frac{iy_3\cos t\sin r + (\beta-1)\sin t}{y_3\cos r\cos t}.$$

Noting that \eqref{eqn:4.8} implies $\frac{\beta - 1}{y_3} = -\overline{y}_3$, we obtain \begin{equation}\label{eqn:step2} x_2 = \frac{i\sin r\cos t - \overline{y}_3 \sin t}{\cos t\cos r}.\end{equation}

Substituting \eqref{eqn:step2} into \eqref{eqn:step0} and solving for $x_3$, we find \begin{equation}\label{eqn:step3} x_3 =  \frac{y_3\sin r\cos t + i(1-\beta)\sin t }{\cos t\cos r}.\end{equation}

Since $(p,q_1,q_2)\neq (0,0,1)$, we may substitute \eqref{eqn:step2} and \eqref{eqn:step3} into \eqref{eqn:4.2} and solve for $\lambda_1 \alpha$, finding 
\begin{equation}\label{eqn:step4} \lambda_1\alpha = \frac{2p(\beta\sin^2 t -1 + \cos^2 t\cos^2 r - 2\im( y_3) \sin r\sin t\cos t) }{p\cos^2 t\cos^2 r - q_1}.\end{equation}

Similarly, since $(p,q_1,q_2)\neq (0,1,0),$ we may solve \eqref{eqn:4.1} for $\beta$, finding \begin{equation}\label{eqn:step5} \beta = -\frac{-2p \im (y_3)\sin t\sin r\cos t +  p\cos^2 t\cos^2 r - q_1}{p\cos^2 t\sin^2 r  - q_2}.\end{equation}

We may now use \eqref{eqn:step5} in \eqref{eqn:step2},\eqref{eqn:step3}, and \eqref{eqn:step4} to express $x_2,x_3$, and $\lambda_1\alpha$ solely in terms of $y_3$.  As the expressions get quite long, we choose not to display them.  Defining $x_2,x_3,\lambda_1\alpha$, and $\beta$ by \eqref{eqn:step2},\eqref{eqn:step3},\eqref{eqn:step4}, and \eqref{eqn:step5}, it is easy to verify that for any $y_3\in \mathbb{C}$, \eqref{eqn:4.1}, \eqref{eqn:4.2}, and \eqref{eqn:4.9} are satisfied.  Moreover, by considering the difference of \eqref{eqn:eqn3} multiplied by $\cos r\sin t$ and \eqref{eqn:eqn4} multiplied by $\sin r$, it is also easy to verify that for any $y_3\in \mathbb{C}$, $\eqref{eqn:eqn3}$ is satisfied if and only if $\eqref{eqn:eqn4}$ is satisfied.

At this stage, we have established that there is a horizontal zero-curvature plane at $(\diag(A(r,s),I_{n-2}),I_{n+1})\in \mathcal{F}$ if and only if there is $y_3\in\mathbb{C}$ satisfying both \eqref{eqn:eqn3} and \eqref{eqn:4.8}.

Turning to \eqref{eqn:eqn3}, a straightforward calculation reveals that the real part of both sides of \eqref{eqn:eqn3} is $-\re (y_3)\sin t\cos r $.  In particular, this is satisfied for any value of $\re (y_3)$.  On the other hand, as long as $2p(p-2q_1 - q_2)\cos^2 r\cos^2 t + 2q_1(p - q_2)$ is not identically zero,  the imaginary part of \eqref{eqn:eqn3} can be solved for $\im (y_3)$, yielding a formula for $\im (y_3)$ whose numerator is 
\begin{multline} ((q_1 - q_2)\cos^2 r + p - 3q_1)p\cos^2 r \cos^4 t\\ + ((-p q_2 + q_1(q_1 + 3q_2))\cos^2 r + p q_1 \sin^2 r)\cos^2 t - q_1^2\sin^2 t - q_1q_2\end{multline} and whose denominator is \begin{equation}\label{eqn:denom}-2\sin r\sin t\cos t(p(p-2q_1 - q_2)\cos^2 r\cos^2 t + q_1(p - q_2)).\end{equation}

We observe that the list of excluded $(p,q_1,q_2)$ in the proposition statement guarantee that the denominator is not identically zero.  To see this, note that the denominator is identically zero if and only if $p(p-2q_1-q_2) = 0$ and $q_1(p-q_2) = 0$.  If $p=0$, the second equation forces $q_1=0$ or $q_2 = 0$, and admissibility then implies $\{q_1,q_2\} = \{0,1\}$, yielding $(p,q_1,q_2)\in \{(0,0,1), (0,1,0)\}$.  On the other hand, when $p = 2q_1 + q_2$ we see that both possibilities---$q_1 = 0$ or $p = q_2$---imply that $q_1 = 0$.  When $q_1 = 0$, admissibility implies $q_2 = 1$, so $(p,q_1,q_2) = (1,0,1)$.  But this is not admissible, as $\gcd(q_1,p-q_2) \neq 1$.

Having determined $\im (y_3)$ solely in terms of $(r,s)$, we observe that substituting this into \eqref{eqn:step5} yields 
$$\beta = \frac{p(p - 3q_1)\cos^2 t\cos^2 r + q_1(p + q_1)}{p(p - 2q_1 - q_2)\cos^2 r\cos^2 t + q_1(p - q_2)},$$ so the denominator of $\beta$ is a factor of the denominator of $\im (y_3)$.

At this stage, all that remains is to determine $\re (y_3)$ using the only remaining equation that   is not automatically solved: $\eqref{eqn:4.8}$.  We find that $\re (y_3) = \sqrt{1-\beta - \im (y_3)^2}$.  Of course, for this to be a valid definition of $\re (y_3)$, we must have $1-\beta - \im (y_3)^2 \geq 0$.   Expressing $1-\beta - \im (y_3)^2$ as a single fraction, the denominator is the square of \eqref{eqn:denom}, so it is automatically non-negative.  On the other hand, after expressing the numerator entirely in terms of $\cos^2 r$ and $\cos^2 t$, we set $x = \cos^2 r$ and $y = \cos^2 t$.  The numerator then is $f_{p,q_1,q_2}(x,y)$.

We have now shown that there is a zero-curvature plane of type (iv) at $[A(\sqrt{\arccos(y)},\sqrt{\arccos(x)}),I)]$ if and only if $f=f_{p,q_1,q_2}(x,y) \geq 0$.   If $f(x,y) > 0$ for some  $(x,y)\in (0,1)^2$, then by continuity, $f> 0$ in a neighborhood of $(x,y)$.  By Corollary \ref{cor:checkF}, we can then find an open set of points containing at least one zero-curvature plane.  On the other hand, if $f\leq 0$ on $(0,1)\times (0,1)$, then since $f$ is not identically zero, $f = 0$ only on a nowhere dense set.  Thus, $\mathcal{E}_{p,q_1,q_2}$ has almost positive curvature if and only if $f\leq 0$ on $[0,1]^2$.

\end{proof}

\section{\texorpdfstring{Analysis of $f_{p,q_1,q_2}$}{Analysis of f}}\label{sec:polyanalysis}

 In this section, we prove Theorem \ref{thm:main}.   For the convenience of the reader, we now indicate where each possibility for $(p,q_1,q_2)$ is handled. 

\begin{center}
\begin{tabular}{c|c|c}
$ \mathbf{(p,q_1,q_2)}$ & \textbf{Proposition} & \textbf{a.p.c.?}\\
\hline
\hline

$(0,1,0)$ & Proposition \ref{prop:first3types} & No \\
\hline
$(0,0,1)$ & Proposition \ref{prop:first3types} & No \\
\hline
$q_1+q_2 = 0 $ & Proposition \ref{prop:q1+q2=0} & iff $p\in \{0,1\}$\\
\hline
$q_1 + q_2 < 0$,  & Proposition \ref{prop:newexamples} & Yes\\
$p\in [q_1 +q_2,q_2]$ & & \\
\hline
$q_1 + q_2 < 0$, $p > q_2$ & Proposition \ref{prop:twocases}& No \\
\hline
$q_1 + q_2<0$, $p < q_1 + q_2$ & Proposition \ref{prop:DJ} & No\\ 
\hline
$q_1 + q_2 > 0$, $p < q_2$ & Proposition \ref{prop:twocases}& No\\
\hline
$q_1 + q_2 > 0$, $p = q_2$ & Proposition \ref{prop:hessian} &No\\
\hline
$q_1 + q_2 > 0$, $p>q_2$,  & Proposition \ref{prop:DJ} &No\\
$p < q_1 + q_2$ & & \\
\hline
$q_1 + q_2 > 0$, $p >q_2$,  & Proposition \ref{prop:DJ} & No\\
$p\geq q_1 + q_2$, $q_1 < 0$ &  & \\
\hline
$q_1+q_2 > 0$, $p > q_2$,  & Proposition \ref{prop:Ralg} & Yes, \\ 
$p\geq q_1 + q_2$, $q_1 \geq 0$ & and \ref{prop:caseivsubcase1.2}  & except $(1,1,0)$ \\
\end{tabular}
\end{center}
\begin{proposition}\label{prop:q1+q2=0} Suppose $q_1+q_2 = 0$ so that $(q_1,q_2) = (-1,1)$.
Then the Wilking metric on $\mathcal{E}_{p,q_1,q_2}$ is almost positively curved if and only if $p \in \{0,1\}$.
\end{proposition}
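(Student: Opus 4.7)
The plan is to split on the value of $p$. Admissibility with $q_2 \geq 0$ and $\gcd(q_1,q_2) = 1$ already forces $(q_1,q_2) = (-1,1)$, so only triples of the form $(p,-1,1)$ with $p \in \mathbb{Z}$ need to be examined. For $p = 0$, the triple $(0,-1,1)$ is excluded from Proposition \ref{prop:fpurpose}, but this is precisely the setting $\vec{p} = \vec{0}$ with $q_1 q_2 < 0$ covered by Wilking's original result \cite{Wi}, as recalled in Section \ref{sec:back1}, so I would simply cite that. For $p = 1$, the triple $(1,-1,1)$ is admissible and not on the exclusion list, so Proposition \ref{prop:fpurpose} applies; direct substitution into the coefficient formulas of Section \ref{sec:poly} collapses all but three coefficients and yields the factorization
\[ f_{1,-1,1}(x,y) = -4x^4y^4 + 8x^3y^3 - 4x^2y^2 = -4x^2y^2(xy-1)^2, \]
which is manifestly $\leq 0$ on $[0,1]^2$, so Proposition \ref{prop:fpurpose} yields almost positive curvature.

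For $p \in \mathbb{Z} \setminus \{0,1\}$, the plan is to exhibit a point of $[0,1]^2$ at which $f_{p,-1,1}$ is strictly positive and then invoke Proposition \ref{prop:fpurpose} in the contrapositive. The key simplification is that the identity $q_1 + q_2 = 0$ kills both $c_{0,1}$ and $c_{1,1}$, while every other $c_{i,1}$ together with every $c_{i,0}$ vanishes by design. Consequently $f_{p,-1,1}$ vanishes to second order along the $x$-axis, and Taylor expanding at $y = 0$ yields
\[ f_{p,-1,1}(x,y) = Q(x)\,y^2 + O(y^3), \]
where $Q(x) := c_{2,2} x^2 + c_{1,2} x + c_{0,2} = -4(2p^2 - 2p + 1)x^2 + 4(2p-1)(p-1)x - (p-1)^2$.

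What remains is to show that this downward-opening quadratic attains a positive value inside $(0,1)$. Its value at $x = 0$ is $-(p-1)^2 < 0$, but its discriminant is $\Delta = 32p(p-1)^3$, which is strictly positive for every integer $p \notin \{0,1\}$ (the factors $p$ and $(p-1)^3$ share a sign on both $p \leq -1$ and $p \geq 2$). A short inequality using that $2p^2 - p + 1 > 0$ for all real $p$ places the vertex $x^* = (2p-1)(p-1)/(2(2p^2 - 2p + 1))$ strictly inside $(0,1)$; since $\Delta > 0$ we have $Q(x^*) > 0$, and by the Taylor expansion above $f_{p,-1,1}(x^*, y) > 0$ for all sufficiently small $y > 0$. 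By continuity this positivity persists on an open set, so Proposition \ref{prop:fpurpose} rules out almost positive curvature. The main obstacle I anticipate is recognizing the second-order vanishing of $f$ along $y = 0$, which reduces a genuine two-variable positivity problem on the square to a one-variable quadratic analysis; once that observation is made, the discriminant and vertex estimates are routine.
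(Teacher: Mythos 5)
Your proof is correct and mirrors the overall shape of the paper's argument — both hinge on Proposition \ref{prop:fpurpose}, cite Wilking's result for $p=0$, and use the factorization $f_{1,-1,1}=-4x^2y^2(xy-1)^2$ for $p=1$ — but your treatment of $p\notin\{0,1\}$ is genuinely different and, as it happens, tighter. The paper handles the remaining cases by ad hoc point evaluations: it asserts $f_{p,-1,1}(\tfrac12,0)=4p(2-p)$ and tests numerically at $(0.3,0.1)$ for $p=2$. However, once $q_1+q_2=0$ one has $c_{i,0}=0$ for all $i$ and $c_{0,0}=-q_1^2(q_1+q_2)=0$, so $f_{p,-1,1}(x,0)\equiv 0$; the stated value $4p(2-p)$ cannot be literally $f(\tfrac12,0)$ (indeed it is, up to sign and scale, the $y^2$-coefficient $Q(\tfrac12)=p(p-2)$), and the ranges ``$p<0$ or $p>3$'' leave $p=3$ unaddressed. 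Your version — observing the second-order vanishing of $f$ along $y=0$, extracting the $y^2$-coefficient $Q(x)=c_{2,2}x^2+c_{1,2}x+c_{0,2}$, computing $\Delta=32p(p-1)^3>0$ for all integers $p\notin\{0,1\}$, and placing the vertex in $(0,1)$ via $2p^2-p+1>0$ — handles every $p$ in a single stroke and sidesteps the computational slip. One small addition worth making explicit: to conclude $x^*\in(0,1)$ you use $2p^2-p+1>0$ for the upper bound, but you should also note that $(2p-1)(p-1)>0$ for integers $p\le -1$ or $p\ge 2$ to secure $x^*>0$. With that remark spelled out, your argument is complete and a clean improvement on the paper's case analysis.
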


\begin{proof}Since $q_1 + q_2 = 0$, admissibility implies $\gcd(q_1,q_2) = 1$, so $q_2 = 1$ and $q_1 = -1$.  If $p=0$, then Wilking \cite{Wi} has already showed his metric is almost positively curved.

Thus, we assume that $p\neq 0$.  We can therefore apply Proposition \ref{prop:fpurpose}, so it is sufficient to determine when $f_{p,-1,1}\leq 0$ on $[0,1]^2$. For these values of $q_1$ and $q_2$, we first compute that $f_{p,-1,1}(\frac{1}{2}, 0) =  4p(2-p)$.  If $p < 0$ or $p > 3$, then $f_{p,-1,1}(\frac{1}{2},0) > 0$. Then Proposition \ref{prop:fpurpose} implies that $\mathcal{E}_{p,-1,1}$ does not have almost positive curvature.

When $p=2$, we have $f_{2,-1,1}(0.3,0.1)  = 0.0069.... > 0$, so Proposition \ref{prop:fpurpose} implies that $\mathcal{E}_{2,-1,1}$ is not almost positively curved.

Finally, when $p=1$, we find $f_{1,-1,1}(x,y) = -4 x^2 y^2 (xy-1)^2 \leq 0$.  Then Proposition \ref{prop:fpurpose} implies that $\mathcal{E}_{1,-1,1}$ is almost positively curved in this case.

\end{proof}

\begin{proposition}\label{prop:twocases}  Suppose that either $q_1 + q_2 < 0$ and $p>q_2$ or that $q_1 + q_2 > 0$ and $p<q_2$.  Then Wilking's metric on $\mathcal{E}_{p,q_1,q_2}$ is not almost positively curved.

\end{proposition}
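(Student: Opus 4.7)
The plan is to apply Proposition \ref{prop:fpurpose}: up to handling one excluded triple by other means, it suffices to exhibit a point $(x_0, y_0) \in [0,1]^2$ at which $f_{p,q_1,q_2}(x_0, y_0) > 0$, because then $f > 0$ on a neighborhood of $(x_0, y_0)$ in $(0,1)^2$ and zero-curvature planes occur on an open set. A first check handles the excluded triples: in case 1, $q_1+q_2 < 0$ immediately rules out $(0,1,0)$, $(0,0,1)$, $(1,1,0)$, and $(0,-1,1)$; in case 2, only $(0,0,1)$ is compatible with the hypothesis, and $\mathcal{E}_{0,0,1}$ is already known not to be almost positively curved by Proposition \ref{prop:first3types}.

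For the remaining triples, I would construct the witness by studying $f$ along the edge $x = 0$ and perturbing into the interior. Using the formulas for $c_{0,0}, c_{0,1}, c_{0,2}$ one computes
\begin{equation*}
 f_{p,q_1,q_2}(0, y) \;=\; -q_1^2\bigl[(q_1+q_2) + (p-q_1-2q_2)\, y\bigr]^2,
\end{equation*}
so $f(0,\cdot) \leq 0$ with a double zero at $y_* := (q_1+q_2)/(q_1+2q_2-p)$. A direct sign check confirms $y_* \in (0,1)$: in case 1 both $q_1+q_2$ and $q_1+2q_2-p = (q_1+q_2)+(q_2-p)$ are negative, while in case 2 both are positive; the inequality $y_* < 1$ reduces to $p > q_2$ or $p < q_2$ respectively, which is exactly the case hypothesis.

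The main step is to compute the transverse derivative $\partial_x f_{p,q_1,q_2}(0, y_*)$. Since $c_{1,0} = c_{1,4} = 0$, we have $\partial_x f(0,y) = y(c_{1,1} + c_{1,2}y + c_{1,3}y^2)$; substituting $y_* = -(q_1+q_2)/(p-q_1-2q_2)$ and simplifying—the one genuine algebraic step of the proof—yields
\begin{equation*}
 \partial_x f_{p,q_1,q_2}(0, y_*) \;=\; 4\, q_1^2\, y_* \,(q_1+q_2)(q_2-p).
\end{equation*}
In case 1 the factor $(q_1+q_2)(q_2-p)$ is a product of two negatives, and in case 2 of two positives, so this derivative is positive provided $q_1 \neq 0$. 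Admissibility combined with each case hypothesis forces $q_1 \neq 0$: otherwise $(q_1, q_2) = (0, 1)$, which is incompatible with $q_1+q_2 < 0$ in case 1 and reduces case 2 to the excluded triple $(0,0,1)$. By Taylor expansion, $f(x_0, y_*) > 0$ for all sufficiently small $x_0 > 0$, and Proposition \ref{prop:fpurpose} completes the proof.

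The main obstacle is the collapse of $c_{1,1} + c_{1,2} y_* + c_{1,3} y_*^2$ to the clean product $4 q_1^2 (q_1+q_2)(q_2-p)$. The three terms are individually degree-three polynomials in $(p, q_1, q_2)$, and only after substituting the specific value of $y_*$, clearing the denominator $(p-q_1-2q_2)^2$, and carrying through the resulting cross-term cancellations do they reduce to this simple form; all other steps are routine sign checks or invocations of results already established in the paper.
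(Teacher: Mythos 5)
Your proof is correct and takes essentially the same route as the paper: both evaluate $f_{p,q_1,q_2}$ at the double root $y_0 = -(q_1+q_2)/(p-q_1-2q_2)$ on the edge $x=0$, verify $y_0 \in (0,1)$, and show the transverse derivative $\partial_x f(0,y_0)$ is strictly positive; your closed form $4q_1^2 y_0 (q_1+q_2)(q_2-p)$ agrees with the paper's $4\,\frac{p-q_2}{p-q_1-2q_2}\,q_1^2(q_1+q_2)^2$ after substituting $y_0$. Your preliminary step is in fact a small improvement: the triple $(0,0,1)$ satisfies the case $q_1+q_2>0$, $p<q_2$, is excluded from Proposition \ref{prop:fpurpose}, and makes $q_1=0$ so the derivative vanishes; the paper's argument silently skips over this, whereas you correctly dispatch it via Proposition \ref{prop:first3types} and then verify $q_1\neq 0$ in all remaining cases.
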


\begin{proof}We first note that the hypotheses implies that $p-q_1-2q_2 \neq 0$.  Indeed, $p-q_1-2q_2 = (p-q_2)-(q_1 + q_2)$ has the same sign  sign as $-(q_1+q_2)$, being a sum of two such terms.

Now, set $y_0 = -\frac{q_1+q_2}{p-q_1-2q_2}$, which is well-defined since the denominator does not vanish. The argument in the previous paragraph establishes that $y_0 > 0$.

We claim that, in addition, $y_0 < 1$.  Indeed, simple algebra shows that, when $q_1 + q_2 < 0$, that this holds if and only if $p > q_2$, while if $q_1 + q_2 > 0$, that it holds if and only if $p < q_2$.

Now, a simple calculation reveals that $f_{p,q_1,q_2}(0,y_0) = 0$ while for the $x$-partial derivative, that $(f_{p,q_1,q_2})_x(0,y_0) = 4\frac{p_1-q_2}{p-q_1-2q_2} q_1^2 ( q_1+q_2)^2 > 0$.  In particular, it follows that $f_{p,q_1,q_2}(x,y_0)>0$ for $x>0$ sufficiently close to $0$.  Thus, from Proposition \ref{prop:fpurpose}, $\mathcal{E}_{p,q_1,q_2}$ is not almost positively curved.

\end{proof}

\begin{proposition}\label{prop:hessian} Suppose $q_1 + q_2 > 0$ and $p=q_2$.  Then $\mathcal{E}_{p,q_1,q_2}$ does not have almost positive curvature with respect to Wilking's metric.
\end{proposition}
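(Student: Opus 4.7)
The plan is to apply Proposition \ref{prop:fpurpose} and exhibit a point of $[0,1]^2$ at which $f_{p,q_1,q_2}$ is strictly positive. Among admissible triples with $p=q_2$ and $q_1+q_2>0$, only $(0,1,0)$ lies in the exclusion list of Proposition \ref{prop:fpurpose}, and that triple is already handled by Proposition \ref{prop:first3types}. So I would reduce to admissible $(p,q_1,q_2)\neq(0,1,0)$ with $p=q_2$ and $s:=q_1+q_2\geq 1$.

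The natural test point is the corner $(0,1)$. Substituting $p=q_2$ (so $p-q_1-2q_2=-s$) into the only nonzero coefficients $c_{0,0},c_{0,1},c_{0,2}$ and summing produces, after a short collapse,
\[
f_{p,q_1,q_2}(0,1) \;=\; q_1^2\,s\,(s-1).
\]
Admissibility with $p=q_2$ forces $q_1\neq 0$, for otherwise $\gcd(q_1,p-q_2)=\gcd(0,0)\neq 1$. When $s\geq 2$, this value is strictly positive, so by continuity $f_{p,q_1,q_2}>0$ on an open subset of $[0,1]^2$, which by Proposition \ref{prop:fpurpose} rules out almost positive curvature in these cases.

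The residual case is $s=1$. Using $\gcd(q_1,p-q_2)=|q_1|=1$ together with $p=q_2$ and $q_1+q_2=1$ narrows the admissible possibilities to $\{(0,1,0),(2,-1,2)\}$, so only $(2,-1,2)$ is left. For this single triple, $f(0,1)=0$ and (as one checks) both first partials of $f_{2,-1,2}$ vanish at $(0,1)$; I would then compute the $2\times 2$ Hessian at $(0,1)$ and observe that its determinant is negative, so the quadratic form is indefinite. Along a ray of the form $(x,1-cx)$ with a suitable $c\in(1,9)$, the second-order term is strictly positive and the ray enters the open interior of $[0,1]^2$, producing a point at which $f_{2,-1,2}>0$ and again ruling out almost positive curvature.

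The main obstacle is this last exceptional triple $(2,-1,2)$: for every other triple in the hypothesis, positivity of $f$ is visible from the zeroth-order value at the corner $(0,1)$, but here that corner degenerates to a boundary zero with vanishing gradient, so one must push to second order and confirm that the direction of positive growth actually points inward into $[0,1]^2$ rather than merely outward.
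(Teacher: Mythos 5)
Your approach has a genuine gap that stems from a typo in the paper's listed coefficients. You compute $f_{p,q_1,q_2}(0,1)=q_1^2\,s\,(s-1)$ with $s=q_1+q_2$, which is what you get if you take the paper's displayed value $c_{0,0}=-q_1^2(q_1+q_2)$ at face value. But that coefficient is misprinted: internal consistency elsewhere in the paper forces $c_{0,0}=-q_1^2(q_1+q_2)^2$. Indeed, since $c_{i,0}=0$ for $i\geq 1$, the boundary restriction $f(x,0)=c_{0,0}$, and the proof of Proposition \ref{prop:Ralg} states $f(x,0)=-q_1^2(q_1+q_2)^2$; the same proof also factors $f(0,y)=-q_1^2\bigl((1-y)q_1+(-2q_2+p)y+q_2\bigr)^2=-q_1^2\bigl((q_1+q_2)+(p-q_1-2q_2)y\bigr)^2$, which matches $c_{0,1},c_{0,2}$ but forces the corrected $c_{0,0}$. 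Evaluating the correct formula at $y=1$ with $p=q_2$ gives $f(0,1)=-q_1^2(p-q_2)^2=0$, not $q_1^2 s(s-1)$. This is also consistent with the paper's own proof, which explicitly treats $(0,1)$ as a simultaneous zero and critical point of $f$ (for all $q_2$, not only for $(2,-1,2)$).

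The consequence is that your easy zeroth-order dismissal of the cases $s\geq 2$ collapses: $f(0,1)=0$ for \emph{every} admissible triple with $p=q_2$, not just when $s=1$, so the corner $(0,1)$ is degenerate throughout. You must therefore carry out the Hessian analysis you sketch for $(2,-1,2)$ uniformly across all triples $(q_2,1,q_2)$ with $q_2\geq 1$ and $(q_2,-1,q_2)$ with $q_2\geq 2$, which is exactly what the paper does: it computes the Hessian eigenvalues $-2p^2-2\pm 2\sqrt{9p^4\pm 24p^3+26p^2\pm 8p+1}$ (sign depending on $q_1=\pm 1$), verifies there is a positive eigenvalue, and checks that the corresponding eigenvector points into the fourth quadrant so that the ray from $(0,1)$ stays inside $[0,1]^2$. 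That argument occupies the bulk of the paper's proof and cannot be bypassed. Your identification of the two-step structure (boundary degeneracy, then second-order push inward) is right, but you apply it to a single triple when it is in fact needed for the entire infinite family.
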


\begin{proof}As admissibility implies $\gcd(q_1,p-q_2) = 1$, we find that $q_1\in \{\pm 1\}$.  If $p = q_2 = 0$, then admissibility implies $(p,q_1,q_2) = (0,1,0)$.  We already showed in Proposition \ref{prop:first3types} that $\mathcal{E}_{0,1,0}$ is not almost positively curved, so we assume for the duration of the proof that $p\neq 0$.  Admissibility then implies $p = q_2> 0$.  We now break into two cases depending on the value of $q_1\in \{\pm 1\}$.

Assume initially that $q_1 = 1$.  Then one easily verifies that $(x,y) = (0,1)$ is simultaneously a zero and a critical point of $f_{p,1,p}$.  That is, $$f_{p,1,p}(0,1) = (f_{p,1,p})_x(0,1) = (f_{p,1,p})_y(0,1) = 0.$$  A routine calculation shows that the eigenvalues of the Hessian matrix at $(0,1)$ are $-2p^2 -2 \pm 2\sqrt{9p^4+24p^3+26p^2+8p+1}$.  We note that \begin{align*}9p^4+24p^3+26p^2+8p+1 & \geq 4 p^4 + 24 + 8p^2\\
&\geq 4 p^4 + 8p^2 + 4\\
&\geq 4 (p^2+1)^2 \\
& \geq 0,
\end{align*} so $-2p^2 -2 + 2\sqrt{9p^4 + 24p^3+26p^2 + 8p + 1}$ is positive.   This eigenvalue has $\vec{v}:=\begin{bmatrix}2(3p^2 + 4p + 1) \\(-2\sqrt{9p^4 + 24p^3 + 26p^2 + 8p + 1} + 4p) \end{bmatrix}$ as an eigenvector.  The top entry of $\vec{v}$ is obviously positive for all $p > 0$.  On the other hand, if $p > 2$, then $\sqrt{9p^4} = 3p^2 > 4p$, so the bottom entry is negative for $p > 2$.  By direct substitution, it is also negative when $p=1$ and when $p=2$.

Thus, $\vec{v}$ points into the fourth quadrant for all $p > 0$.  Since $f_{p,1,p}(0,1) = 0$, it follows that $f_{p,1,p}( t \vec{v}^T) > 0$ for all $t>0$ close enough to zero.  In particular, there is an interior point of $[0,1]^2$ for which $f_{p,1,p}>0$, so $\mathcal{E}_{p,1,p}$ is not almost positively curved by Proposition \ref{prop:fpurpose}.

Having concluded the case where $q_1 = 1$, we now assume that $q_1 = -1$.  Since $q_1 + q_2 > 0$, this implies $p = q_2 \geq 2$.

As in the case where $q_1 = 1$, we observe that $(x,y) = (0,1)$ is simultaneously a zero and a critical point of $f_{p,-1,p}$.  In this case, the eigenvalues of the Hessian matrix at $(0,1)$ are $-2p^2 - 2 \pm 2\sqrt{9p^4 - 24p^3 + 26p^2 - 8p + 1}$.

We now show that $-2 p^2 -2 + 2\sqrt{9p^4 - 24p^3 + 26p^2 - 8p + 1} > 0$ for all $p\geq 2$.  One easily verifies that this inequality holds if and only if $8p^4 - 24p^3 + 24 p^2 -8p>0$. 
When $p = 2$, this reduces to $16 > 0$, which is obviously true.  On the other hand, for $p \geq 3$, $8p^4 \geq 24 p^3$ and $24p^2 \geq 72 p > 8 p$, so the inequality is also true for $p \geq 3$.

This positive eigenvalue $-2p^2-2+2\sqrt{9p^4 - 24p^3 + 26p^2 - 8p + 1}$ has $v:=\begin{bmatrix}2(3p^2 - 4p + 1)\\ -(2\sqrt{9p^4 - 24p^3 + 26p^2 - 8p + 1} + 4p) \end{bmatrix}$ as an eigenvector.  For $p\geq 2$, the top entry is positive while the bottom is negative, which implies that $v$ points into the fourth quadrant.  Then, just as in the case where $q_1=1$, it follows that $f_{p,-1,p} > 0$ at an interior point of $[0,1]^2$, so
Proposition \ref{prop:fpurpose} implies that $\mathcal{E}_{p,-1,p}$ is not almost positively curved in this case.
\end{proof}

For the next set of parameters, we will make use of the following prior result of the first author and Johnson \cite{DJ} concerning the classical seven-dimensional Eschenburg spaces.  

\begin{theorem}[D-,Johnson]\label{thm:DJ}  Suppose $q_1'$, $q_2'$, and $q_3'$ are pairwise relatively prime integers satisfying all of the following conditions:

\begin{itemize} \item $q_1'+q_2'+q_3' > 0$
\item $q_1' > q_2'$
\item $q_1' > 0$
\item $q_2' q_3'< 0$
\item $q_2' + q_3' \geq 0$
\end{itemize}

Set $\vec{p} = (0,0,q_1'+q_2'+q_3')$ and $\vec{q} = (q_1',q_2',q_3')$.  Then, the particular Wilking metric from \cite{DJ} on $\mathcal{E}_{\vec{p},\vec{q}} = SU(3)\bq S^1_{\vec{p},\vec{q}}$ is not almost positively curved.

\end{theorem}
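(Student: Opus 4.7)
The plan is to reduce the assertion to the analysis machinery developed earlier in the paper and then cite the explicit zero-curvature calculations of \cite{DJ}. Concretely, the Eschenburg spaces $SU(3)\bq S^1_{\vec{p},\vec{q}}$ with $\vec{p} = (0,0,q_1'+q_2'+q_3')$ and $\vec{q}=(q_1',q_2',q_3')$ are the $n=2$ case of the generalized Eschenburg spaces, so Proposition \ref{prop:Wcurvature} characterizes zero-curvature planes with respect to Wilking's metric via the existence of linearly independent $X,Y\in \mathfrak{u}(3)$ satisfying the four bracket/orthogonality conditions at the base point $B$. As in the cohomogeneity two reduction (Section \ref{sec:cohom}) and the derivation of $f_{p,q_1,q_2}$ in Section \ref{sec:poly}, I would look for a low-dimensional slice of $SU(3)$ that meets each orbit of the induced symmetry and parameterize candidate $(X,Y)$ accordingly, yielding an analog of Proposition \ref{prop:fpurpose}: a polynomial obstruction whose positivity on an open subset of the slice produces an open set of zero-curvature planes.

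The core step is to produce this explicit family. Mirroring the case analysis in Sections \ref{sec:easycurv}--\ref{sec:poly}, I would focus on type (iv) zero-curvature planes, since the numerics in the main paper show these are the ones most likely to survive on an open set. I would solve the associated linear system for the auxiliary entries $x_2,x_3,\lambda_1\alpha,\beta$, reducing everything to a single algebraic equation in one remaining complex parameter (the analog of $y_3$), and then isolate a polynomial $g_{q_1',q_2',q_3'}$ in two slice variables whose sign controls whether a valid real solution exists.

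The main step, and the main obstacle, is to verify that under the sign conditions $q_1'>q_2'>0$, $q_1'+q_2'+q_3'>0$, $q_2'q_3'<0$, and $q_2'+q_3'\geq 0$, this polynomial $g_{q_1',q_2',q_3'}$ is strictly positive at some interior point of the slice. I would try to exhibit a canonical test point (for example, an analog of $(0,y_0)$ used in Proposition \ref{prop:twocases}) where $g$ vanishes together with one partial derivative, and then compute the sign of a one-sided derivative or Hessian eigenvector (as in Proposition \ref{prop:hessian}) to propagate $g>0$ into a half-neighborhood. The hypotheses on the $q_i'$ are tailored so that the relevant derivative has a definite sign, forcing zero-curvature planes on an open set and hence preventing almost positive curvature.

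Since this theorem is an essentially self-contained statement about a well-studied family, and the above plan is precisely a specialization of what \cite{DJ} carried out in the seven-dimensional setting, the cleanest written proof in the current paper is simply to invoke the classification in \cite{DJ}, verifying that the stated hypotheses fall into their ``not almost positively curved'' regime, rather than reproducing the multi-case polynomial analysis here.
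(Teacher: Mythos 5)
The statement you were given is a quoted result from \cite{DJ}: the paper labels it ``Theorem (D--, Johnson)'' and does not prove it at all. It simply cites the reference and then, in the surrounding text, explains how to translate conventions (the $SU(3)\bq S^1$ picture of \cite{DJ} versus the $U(3)\bq S^1\cdot U(1)$ picture used in the present paper, together with the observation that the diffeomorphism between them carries zero-curvature planes to zero-curvature planes after adjoining central $iI$ components). Your final paragraph lands on exactly this: the correct ``proof'' in this paper is simply to invoke \cite{DJ}.

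The preceding three paragraphs of your proposal, sketching a hypothetical from-scratch derivation via a cohomogeneity-two slice and a type~(iv) polynomial obstruction, are a plausible guess at how \cite{DJ} itself argues, but they are not what this paper does and are not needed. Two small cautions if you did want to carry that sketch out: (1) the spaces in Theorem~\ref{thm:DJ} are literally $SU(3)$ biquotients, not $U(3)$ biquotients, so they are not ``the $n=2$ case'' of the setup in Sections~\ref{sec:cohom}--\ref{sec:poly} without first passing through the diffeomorphism $\Theta$ the paper constructs; and (2) the metric in \cite{DJ} is a Cheeger deformation in the direction of $U(2)\subset SU(3)$ rather than $U(1)\cdot U(n)\subset U(n+1)$, so matching horizontal distributions (and hence zero-curvature planes) requires the $iI$-shifting argument the paper supplies, not merely a dimension count. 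None of this affects your conclusion, which is the same as the paper's: cite \cite{DJ}.
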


In this theorem, the notation $SU(3)\bq S^1_{\vec{p},\vec{q}}$ refers to the quotient of $SU(3)$ by the $S^1$-action given by $z\ast B = \diag(1,1,z^{p'}) B \diag(z^{q_1'}, z^{q_2'}, z^{q_3'})^{-1}$.

As we have followed slightly different conventions in this article than those used in \cite{DJ}, we now outline the translation of Theorem \ref{thm:DJ} for our situation.  First, we view the seven-dimensional Eschenburg spaces as biquotients $U(3)\bq S^1_{\vec{p},\vec{q}} \cdot U(1)$, while in \cite{DJ}, they are viewed as biquotient of the form $SU(3)\bq S^1_{\vec{p'},\vec{q'}}$.  Morever, in \cite{DJ}, the metric is obtained via Wilking's doubling trick, where each $SU(3)$ factor is equipped with a Cheeger-deformed metric in the direction of $U(2)$, embedded as $A\mapsto \diag(A,\overline{\det(A)})$.  Supposing $\rho = \begin{bmatrix} 0 & 0 & 1\\ 0 & 1 & 0\\ 1 & 0 & 0\end{bmatrix}$, the map $SU(3)\rightarrow U(3)$ given by $A\mapsto \rho A \rho^{-1}$ induces a diffeomorphism $\Theta$ between $SU(3)\bq S^1_{\vec{p'},\vec{q'}}$ and $U(3)\bq S^1_{p,\vec{q}} U(1)$ when $p = q_1'+q_2'+q_3'$, $q_1 = q_3'$, and $q_2 = q_2'$.  We note that the convention that $q_1' > q_2'$ used in Theorem \ref{thm:DJ} is obtained by applying an isometry of the Eschenburg space.  As such, for our purposes, we actually have $$q_1' = \max\{q_2, p-q_1-q_2\}, q_2' = \min\{q_2, p-q_1-q_2\}, \text{ and } q_3' = q_1.$$

The diffeomorphism $\Theta$ maps points with zero-curvature planes with respect to the metric considered in \cite{DJ} to points with zero-curvature planes with respect to the metric considered by Wilking.   Indeed, if $X',Y'\in \mathfrak{su}(3)$ are such that $\widehat{X'}$, $\widehat{Y'}$ span a horizontal zero-curvature plane in $SU(3)\times SU(3)$, then there are unique real numbers $\lambda_X$ and $\lambda_Y$ for which $\widehat{X}:=\widehat{X' + \lambda_X i I}$ and $\widehat{Y}:=\widehat{Y'+\lambda_Y iI}$ are horizontal in $U(3)\times U(3)$.  Then $\operatorname{span}\{\widehat{X},\widehat{Y}\}$ is a horizontal zero-curvature plane in $U(3)\times U(3)$, which follows easily from the fact that $iI$ lies in the center of $\mathfrak{u}(n+1)$, as well in the center of all of the subalgebras which contribute to Cheeger deformations used in the construction of the metric.

\begin{proposition}\label{prop:DJ} 
Suppose that one of the following occurs:

\begin{itemize}\item $p<q_1 + q_2 < 0$

\item  $0<q_2 < p < q_1 + q_2$

\item  $0<q_1 + q_2 < q_2 < p.$
\end{itemize}  Then Wilking's metric on $\mathcal{E}_{\vec{p},\vec{q}}$ is not almost positively curved except when $p=q_1 + 2q_2$.
\end{proposition}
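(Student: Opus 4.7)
The plan is to reduce to the classical seven-dimensional result Theorem \ref{thm:DJ} via the dimension-independent polynomial $f_{p,q_1,q_2}$ from Proposition \ref{prop:fpurpose}. A brief inspection shows that none of the three parameter ranges intersects the four exceptional triples excluded by Proposition \ref{prop:fpurpose}: Cases 2 and 3 force $p > q_2 > 0$, while Case 1 forces $p < 0$. Thus Proposition \ref{prop:fpurpose} applies, and because the polynomial $f_{p,q_1,q_2}$ depends only on $(p,q_1,q_2)$ and not on $n$, it suffices to exhibit, for each admissible triple in the three ranges with $p \neq q_1 + 2q_2$, a single point $(x_0, y_0) \in [0,1]^2$ with $f_{p,q_1,q_2}(x_0,y_0) > 0$.

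To produce such a point I would invoke Theorem \ref{thm:DJ}, which handles $n=2$, together with the diffeomorphism $\Theta$ described in the paragraph following its statement. Under the correspondence $q_3' = q_1$ and $\{q_1', q_2'\} = \{q_2,\, p-q_1-q_2\}$ with $q_1' > q_2'$, one has $q_1' + q_2' + q_3' = p$, and pairwise coprimality of the $q_i'$ follows from the admissibility conditions of Proposition \ref{prop:simpad}. In Case 2, since $p - q_1 - q_2 < 0 < q_2$, the assignment $(q_1',q_2',q_3') = (q_2,\, p-q_1-q_2,\, q_1)$ satisfies $q_1' > 0$, $q_2' q_3' = (p-q_1-q_2) q_1 < 0$, and $q_2' + q_3' = p - q_2 > 0$; the strict inequality $q_1' > q_2'$ holds precisely when $p \neq q_1 + 2q_2$, which is where the exception in the statement arises. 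A parallel verification, broken into the two sub-cases depending on whether $p > q_1 + 2q_2$ or $p < q_1 + 2q_2$, handles Case 3 (using that $q_1 < 0$ and $q_1 + q_2 > 0$ there).

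For Case 1, where $p < 0$, I would first apply the diffeomorphism $\mathcal{E}_{p,q_1,q_2} \cong \mathcal{E}_{-p,-q_1,-q_2}$ arising from the substitution $z \mapsto \bar z$ in the $S^1$-action, or equivalently observe (coefficient by coefficient from the definition of $f$) that $f_{p,q_1,q_2}$ is invariant under simultaneous sign reversal of $(p,q_1,q_2)$. After this flip the new parameters satisfy $-p > -(q_1+q_2) > 0$ with $-q_1 > 0$, so the assignment $(q_1', q_2', q_3') = (-p+q_1+q_2,\, -q_2,\, -q_1)$ again verifies all hypotheses of Theorem \ref{thm:DJ} when $p \neq q_1 + 2q_2$. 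Combining these ingredients, Theorem \ref{thm:DJ} together with the remark that $\Theta$ carries zero-curvature planes for the metric of \cite{DJ} to zero-curvature planes for Wilking's metric provides a non-almost-positively-curved seven-dimensional Eschenburg space, and Proposition \ref{prop:fpurpose} then yields a point where $f_{p,q_1,q_2} > 0$. A final application of Proposition \ref{prop:fpurpose} for arbitrary $n \geq 2$ completes the proof. The main bookkeeping obstacle is tracking sign conventions in Case 1 and confirming that after the sign-flip the condition $p \neq q_1 + 2q_2$ still corresponds exactly to the strict inequality $q_1' > q_2'$ required by Theorem \ref{thm:DJ}.
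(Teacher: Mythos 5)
Your argument is correct and uses the same core ingredient as the paper, namely the seven-dimensional result Theorem~\ref{thm:DJ} via the translation rules for the diffeomorphism $\Theta$. The case-by-case verification of the hypotheses of Theorem~\ref{thm:DJ} matches the paper's in substance (including the sign flip $(p,q_1,q_2)\mapsto(-p,-q_1,-q_2)$ for Case~1). Where you differ is in the dimension-jumping step: the paper simply observes that if $X,Y\in\mathfrak{su}(3)$ span a horizontal zero-curvature plane in $SU(3)\times SU(3)$, then padding $X,Y$ with zeros produces $(n+1)\times(n+1)$ matrices still satisfying Proposition~\ref{prop:Wcurvature}, and invokes Corollary~\ref{cor:checkF}. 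You instead route through Proposition~\ref{prop:fpurpose}, using the fact that $f_{p,q_1,q_2}$ is $n$-independent: the $n=2$ failure of almost positive curvature forces $f>0$ somewhere on $[0,1]^2$, which then blocks almost positive curvature for every $n\geq 2$. Both mechanisms are valid; the paper's padding is more elementary and avoids depending on the full polynomial machinery of Section~\ref{sec:poly}, while yours is arguably cleaner once Proposition~\ref{prop:fpurpose} is in hand and makes the $n$-independence explicit.

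Two small things to flag. First, your assertion that $f_{p,q_1,q_2}$ is invariant under simultaneous sign reversal, ``coefficient by coefficient,'' actually fails against the coefficients as printed: the stated $c_{0,0}=-q_1^2(q_1+q_2)$ is degree~$3$ and would flip sign, unlike every other coefficient, which is degree~$4$. This is a typo in the paper; the boundary computation $f(x,0)=-q_1^2(q_1+q_2)^2$ in the proof of Proposition~\ref{prop:Ralg} shows the intended value is $c_{0,0}=-q_1^2(q_1+q_2)^2$, after which the sign-invariance does hold coefficient by coefficient, exactly as you claim. Second, the exception $p=q_1+2q_2$ cannot actually occur within any of the three ranges once admissibility is imposed (it is automatic in Cases~1 and~2 since $q_1'>0>q_2'$ there, and in Case~3 admissibility forces $q_2=1$, contradicting $q_1<0$ and $q_1+q_2>0$); so your bookkeeping concern about matching the exception to $q_1'>q_2'$ is moot, though harmless.
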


\begin{proof} We first prove this under the special assumption that the dimension of $\mathcal{E}_{\vec{p},\vec{q}}$ is $7$.  

So, assume initially that $p < q_1 + q_2 < 0$ and recall that $q_2\geq 0$.  If $q_2 = 0$, then admissibility implies $q_1  = 1$, which contradicts the assumption that $q_1 + q_2 < 0$.  Thus $q_2 > 0$ and $q_1 < 0$.  To use Theorem \ref{thm:DJ}, we must have $p' > 0$, so we use the equivalent action obtained by negating all of $p$, $q_1$, $q_2$.

We thus have $q_2 < 0 < q_1 + q_2 < p$ and $q_1 > 0$.  Then $p' = p > 0$, $q_3' = q_1 > 0$.  Moreover, we have $p-q_1-q_2 > 0 > q_2$, so that $q_1' = p-q_1-q_2$ and $q_2' = q_2$.

Then, we have $p' > 0$, $q_1' > 0 > q_2'$, $q_2'q_3' = q_2 q_1 < 0$, and $q_2' + q_3' = q_2 + q_1 > 0$, so case 1 of Theorem \ref{thm:DJ} implies $\mathcal{E}^7_{\vec{p},\vec{q}}$ is not almost positively curved in this case.

Next, assume that $0<q_2 < p < q_1 + q_2$.  Then $p-q_1-q_2 < 0 < q_2$, so $q_1' = q_2$ and $q_2' = p-q_1-q_2$.  Since $p' = p >0$ and $q_3' = q_1 >0$, we find that case 1 of Theorem \ref{thm:DJ} implies $\mathcal{E}^7_{\vec{p},\vec{q}}$ is not almost positively curved.

Finally, assume that $0<q_1 + q_2 < q_2 < p$.  Then $q_3' = q_1 < 0$.  We have $\{q_1',q_2'\} = \{q_2, p-q_1-q_2\}$.  We briefly note that it is not possible for $q_2 = p-q_1-q_2$, for then admissibility implies $q_2 = 1$, contradicting the fact that both $q_1 < 0$ and $q_1 + q_2 > 0$.

As both $q_2, p-q_1-q_2 > 0$, we see that $q_2'q_3' < 0$ in either case.  We now break into cases depending on the value of $q_2'\in \{q_2, p-q_1-q_2\}$.

Suppose initially that $q_1'=q_2  > p-q_1-q_2 = q_2'$.  Then $q_2' + q_3' = p-q_1-q_2 + q_1 = p-q_2 > 0$, so $\mathcal{E}^7_{\vec{p},\vec{q}}$ is not almost positively curved.

Finally, suppose that $q_1'=p-q_1-q_2 > q_2 = q_2'$.  Then $q_2'+q_3' = q_2 + q_1 >0$, so $\mathcal{E}^7_{\vec{p},\vec{q}}$ is not almost positively curved. 

\bigskip

Having completing the proof when $\dim \mathcal{E}_{\vec{p},\vec{q}} = 7$, we now turn to the general case.  In fact, we will show that for fixed $\vec{p},\vec{q}$, the existence of an open set of of points with zero-curvature planes on $\mathcal{E}^7_{\vec{p},\vec{q}}$ implies the same on $\mathcal{E}^{4n-1}_{\vec{p},\vec{q}}$ for all $n\geq 2$.

Since we have already established that $\mathcal{E}^7_{\vec{p},\vec{q}}$ is not almost positively curved, Corollary \ref{cor:checkF} yields a non-empty open set of points $U\subseteq \mathcal{F}$ with the following property:  for each $(g_1,g_2)\in U$, there is a horizontal zero-curvature plane $\operatorname{span}\{\widehat{X},\widehat{Y}\}$.  In particular, each condition of Proposition \ref{prop:Wcurvature} is verified by $X$ and $Y$.

As $\dim \mathcal{E}^7_{\vec{p},\vec{q}} = 7$, $X$ and $Y$ are $3\times 3$ matrices.  Now, for each $n\geq 3$, let $X'$ be the unique $(n+1)\times (n+1)$ matrix whose top left $3\times 3$ block is $X$, and all other entries are zero.  Suppose $Y'$ is obtained similarly from $Y$.  Then it is straightforward to verify that $X'$ and $Y'$ satisfy the conditions of Proposition \ref{prop:Wcurvature}, so that $\widehat{X'}$ and $\widehat{Y'}$ span a horizontal zero-curvature on an open set of $\mathcal{F}$.  Thus, by Corollary \ref{cor:checkF}, $\mathcal{E}^{4n-1}_{\vec{p},\vec{q}}$ is not almost positively curved in this case.

\bigskip

\end{proof}

\begin{proposition}\label{prop:Ralg}
Suppose that $p\geq q_1 + q_2 > 0$, $p> q_2$, and $q_1\geq 0$.  Assume in addition that $(p,q_1,q_2) \neq (1,1,0)$.  Then the Wilking metric on $\mathcal{E}_{\vec{p},\vec{q}}$ is almost positively curved.
\end{proposition}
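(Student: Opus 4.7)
The hypotheses $p \geq q_1+q_2 > 0$, $p > q_2$, and $q_1 \geq 0$ exclude all four exceptional triples of Proposition \ref{prop:fpurpose}: the triples $(0,1,0)$, $(0,0,1)$, and $(0,-1,1)$ all have $p = 0$, contradicting $p \geq q_1+q_2 > 0$, and $(1,1,0)$ is excluded by hypothesis. Hence by Proposition \ref{prop:fpurpose} it suffices to show that $f_{p,q_1,q_2}(x,y) \leq 0$ for all $(x,y) \in [0,1]^2$, which becomes the goal of the proof.

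Since $f_{p,q_1,q_2}$ is continuous on the compact square, the plan is to combine a boundary analysis with an interior critical-point analysis. First, I will evaluate $f_{p,q_1,q_2}$ on each of the four edges of $\partial([0,1]^2)$, obtaining four one-variable polynomials of degree at most $4$, and show each is non-positive on $[0,1]$. I expect these restrictions to admit concrete factorizations: the coefficients $c_{0,0}, c_{0,1}, c_{0,2}$ share the factor $-q_1^2$, and the restriction $f(0,y)$ should reduce to a negative scalar times a perfect square in $y$, hence manifestly non-positive. The other three edges ought to factor analogously into linear factors in the single remaining variable whose signs are pinned down by the hypotheses $p \geq q_1+q_2$, $p > q_2$, and $q_1 \geq 0$.

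Second, I will address the interior, either by locating the critical points of $f_{p,q_1,q_2}$ in $(0,1)^2$ via $\partial_x f = \partial_y f = 0$ (eliminating one variable through a resultant and verifying non-positivity at each root), or more elegantly by exhibiting a Positivstellensatz-style decomposition
\[
-f_{p,q_1,q_2}(x,y) \;=\; \sum_i \sigma_i(x,y)\, x^{a_i}(1-x)^{b_i}y^{c_i}(1-y)^{d_i},
\]
in which each $\sigma_i$ is a sum of squares of real polynomials and each scalar coefficient produced by full expansion is non-negative under the hypotheses. Such a representation would establish $f \leq 0$ on $[0,1]^2$ in one stroke, handling boundary and interior simultaneously.

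The main obstacle will be the algebraic bulk of $f_{p,q_1,q_2}$: its coefficients are cubic in $(p,q_1,q_2)$, and both its boundary restrictions and its critical set depend on the signs of the linear forms $q_1+q_2$, $p-q_2$, $p-q_1-q_2$, $p-q_1-2q_2$, and $q_1-q_2$. Among these, only $p-q_1-2q_2 = (p-q_1-q_2)-q_2$ is not signed by the hypotheses, so case splits in that sign, together with the degenerate sub-cases $p = q_1+q_2$, $q_1 = 0$, and $q_1 = q_2$ (which admissibility forces to $q_1 = q_2 = 1$ with $p \geq 2$), are likely to be needed. Once $f \leq 0$ on $[0,1]^2$ is verified, Proposition \ref{prop:fpurpose} yields almost positive curvature, after the trivial observation that $f_{p,q_1,q_2}$ is not identically zero (its leading coefficient $c_{4,4} = -p^2(q_1-q_2)^2$ is nonzero unless $q_1 = q_2$, and the resulting sub-case is handled by direct inspection).
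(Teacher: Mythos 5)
Your reduction via Proposition \ref{prop:fpurpose} and your boundary analysis both match the paper exactly: the hypotheses do exclude all four exceptional triples, and on each edge of $[0,1]^2$ the restriction of $f_{p,q_1,q_2}$ does factor as $-(\text{polynomial})^2$, which the paper records explicitly. Your list of special cases also largely matches the paper's ($q_1 = 0$, $p = q_1 + 2q_2$, $q_2 = 0$, and $p = q_1 + q_2$, the last two forcing $(p,q_1,q_2)\in\{(2,1,0),(2,1,1)\}$). So far so good.

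The gap is the interior argument, which is the heart of the proof and which you do not actually carry out. You offer two alternatives --- eliminate a variable by a resultant of $f_x$ and $f_y$, or exhibit a Positivstellensatz certificate for $-f$ --- but neither is executed, and neither is clearly feasible at this level of generality: the resultant of two bivariate quartics whose coefficients are cubics in $(p,q_1,q_2)$ is unmanageable by hand, and no explicit SOS-type decomposition is produced. The paper's actual device is to observe that any interior critical point of $f$ is a zero of the Euler-type combination $x f_x - y f_y$, and that this combination factors as $-2(p-q_1-2q_2)\,y\,g_{p,q_1,q_2}(x,y)$ with $g$ a \emph{cubic} in $xy$-degree. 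Since $p\ne q_1+2q_2$ in the generic case, a putative interior critical point of $f$ must be a zero of $g$, and the same trick applied to $g$ gives $x g_x - y g_y = y(pxy+q_1)^2(p-q_1-2q_2)$, which is manifestly nonvanishing on $(0,1)^2$ under the hypotheses; combined with $g<0$ on $\partial[0,1]^2$, this forces $g<0$ on the whole square, a contradiction. That two-step descent via $x\partial_x - y\partial_y$ is the essential idea missing from your proposal; without it (or a concrete replacement) the interior claim is unproved. A smaller point: your remark that $f$ might be identically zero when $q_1=q_2$ is moot, since $c_{0,0}=-q_1^2(q_1+q_2)<0$ under the hypothesis $q_1+q_2>0$ (and if $q_1=0$ then $c_{2,2}=-p^2q_2^2\ne 0$), so $f\not\equiv 0$ always holds; this is in any case already accounted for inside Proposition \ref{prop:fpurpose}.
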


\begin{proof}
Because $(p,q_1,q_2) \neq (1,1,0)$, Proposition \ref{prop:fpurpose} implies that we need only demonstrate that $f=f_{p,q_1,q_2} \leq 0$ on the rectangle $[0,1]^2$.

We will first deal with a few special cases before moving onto the general case.

We begin with the case where $q_1 = 0$.  Then admissibility implies $q_2 = 1$ and $\gcd(0,p-1) = 1$, so $p\in \{0,2\}$.  Since $p > q_2$, we must have $p=2$.  Then $f_{2,0,1} = -4x^2y^2 (xy-1)^2 \leq 0$.

We now turn to the case where $p = q_1 + 2q_2$.  In this case, $f$ simplifies to $$-(xy - 1)^2((xy - 1)q_1^2 + q_2(xy - 1)q_1 - 2xq_2^2y)^2,$$ which is obviously non-positive.

We henceforth assume that $p\neq q_1 + 2q_2$ and that $q_1 > 0$.

To show that $f\leq 0$, we will show that $f\leq 0$ on the boundary of the rectangle and that $f$ has no critical points in the interior of the rectangle.

We begin with the boundary.  We compute \begin{align*} f(0,y) &= -q_1^2\left((1 - y)q_1 + (-2q_2 + p)y + q_2\right)^2 \\
f(1,y) &= -\left(p(-2q_1 - q_2 + p)y^2 + (-pq_2 + 2q_1^2 + 3q_1q_2)y - q_1(q_1 + q_2)\right)^2 \\
f(x,0) &= -q_1^2(q_1 + q_2)^2  \\
f(x,1) &= -\left(p(q_1 - q_2)x^2 + (q_1^2 + (-4p + 3q_2)q_1 + p(-q_2 + p))x + q_1(-q_2 + p)\right)^2.\end{align*}  In particular, along each boundary segment of the rectangle, $f$ is obviously non-positive.

We now show there are no critical points in the interior.  We, again, will handle a few special cases first, beginning with the case where $q_2 = 0$.  Admissibility and the fact that $p\geq q_1 + q_2$ imply $(p,q_1,q_2) = (2,1,0)$.  Here we can argue directly that $f$ has no critical points in the interior of the rectangle.  Indeed, the partial derivative $f_x$ factors as\begin{equation}
    -2 y \left(2  x^{2} y^{2} - 2 x y^{2} - x y - y + 3\right) \left(4 x y - 2 y - 1\right).
\end{equation}

Since we are interested in interior critical points, we find that either $2  x^{2} y^{2} - 2 x y^{2} - x y - y + 3 = 0$ or $4 x y - 2 y - 1 = 0$.  For the first possibility, we find 
\begin{align*}2  x^{2} y^{2} - 2 x y^{2} - x y - y + 3 &=  2y^2x(x-1) -y(x+1) + 3\\ &\geq -\frac{1}{2} - 2 + 3 \\ & > 0,\end{align*} 
giving a contradiction.  On the other hand, if $4xy-2y-1 = 0$, then $x\neq \frac{1}{2}$ and $y = \frac{1}{4x-2}$.  Substituting this into $f_y = 0$, we find $$\frac{5x^3 - 5x^2 - x + 1}{(2x - 1)^3} = 0.$$  As $x\neq \frac{1}{2}$, we find $x = 1$ or $x= \pm \frac{1}{\sqrt{5}}$.  In order to have an interior critical point, we must have $x = \frac{1}{\sqrt{5}}$, which then implies that $y = \frac{1}{4(1/\sqrt{5})-2} \approx -4.73...$, which is not interior.

Next, we handle the special case where $p=q_1+q_2$. Admissibility and the hypothesis that $q_1 > 0$ imply $q_2= q_1 = 1$.  We will directly show that $f_{2,1,1}(x,y)$ has no critical points in the interior of the rectangle $[0,1]^2$.

To see this, we note that $f_x = -4y(2xy^3 - 8xy^2 + 8xy + y^2 - 2)$.  Since we are interested in interior points, we find that $x =-\frac{y^2-2}{2y(y-2)^2}$.  Substituting this into $f_y$ and solving for $y$, we find that $y \in \{1, \frac{5\pm \sqrt{5}}{2}\} \approx \{ 1, 1.38...,3.61...\}$.  Thus, $y$ is not in the interior of the rectangle, so $f$ has no interior critical points in this case as well.

For the remainder of the proof, we assume $q_2 > 0$ and $p\neq q_1 + q_2$.  Since $p\geq q_1+q_2$ by assumption, we will, in fact, assume that $p > q_1 + q_2$.

We suppose for a contradiction that $f_x(x_0,y_0) = f_y (x_0,y_0) = 0$ for some $(x_0,y_0) \in (0,1)^2$.  Then $(x_0,y_0)$ is a zero of the function $xf_x(x,y) - y f_y(x,y)$.  This function has the expression $$-2(p-q_1-2q_2) y g_{p,q_1,q_2}(x,y)$$ where $g(x,y) =g_{p,q_1,q_2}(x,y) = $  \begin{multline}p^2(q_1 - q_2 )x^3y^3 + (- p^3  + p^2 q_1  + 2 p^2 q_2)x^2 y^3  + p(3 p q_1  - p q_2 - 6 q_1^2 - 2  q_1 q_2) x^2y^2\\ 2p(- p q_1  +  q_1^2  + 2 q_1q_2) xy^2 + q_1(2 pq_1  - 2pq_2 + q_1^2 - q_1 q_2) xy \\+ q_1^2(- p  + q_1 + 2 q_2)y - q_1^3 - q_1^2q_2.\end{multline}

In particular, since $(x_0,y_0)$ is a zero of $xf_x - y f_y$ and $p\neq q_1+2q_2$, it follows that $g(x_0,y_0) = 0$.  We now show that this is impossible by showing that  $g< 0$ on the boundary of $[0,1]^2$ and that $g$ has no critical points in the interior of the rectangle $[0,1]^2$.

We begin by showing $g$ has no critical points in the interior.  We compute $$xg_x(x,y) - y g_y(x,y) = y(p xy + q_1)^2(p - q_1 - 2q_2).$$  Since $p\neq q_1 + 2q_2$ by assumption, this can only vanish in the interior of the rectangle if $pxy = -q_1$.  But since $q_1> 0$ and $p> 0$, there is no solution.  In particular, $xg_x - y g_y$ is non-vanishing in the interior of the rectangle.  As any interior critical point of $g$ is a zero of $x g_x - y g_y$, we conclude that $g$ has no critical points in the interior.

It remains to verify that $g < 0$ on the boundary of the rectangle $[0,1]^2$.  We begin by verifying that $g<0$ on the four corners of the rectangle.  We compute

\begin{itemize}\item $g(0,0) = -q_1^2(q_1+q_2)$
\item $g(0,1) = -q_1^2(p-q_2)$
\item  $g(1,0) = -q_1^2(q_1+q_2)$
\item $g(1,1) = -(p-q_1)^3$.

\end{itemize}

In all four cases, the fact that $p > q_1 + q_2$ and that both $q_1,q_2 > 0$ implies $g<0$ at all four corner points.

We now turn to verifying $g<0$ on the interior of each of the four line segments making up the boundary of the rectangle.  First, we compute $g(x,0) = -q_2^2(q_1+q_2) < 0$.

Next, we focus on $g(0,y)$ with $y \in (0,1)$.  We find $$g(0,y) = -q_1^2((1 - y)q_1 + (-2q_2 + p)y + q_2).$$  Since the factor in parenthesis is linear as a function of $y$, to verify that $g(0,y) < 0$ for all $y\in [0,1]$, we only need to verify it when $y \in \{0,1\}$.  But these correspond to corner points, so the proof is complete in this case.

We next consider $g(1,y)$, which is given by $g(1,y) =$ \begin{multline} -p^2 (-2q_1 - q_2 + p_1)y^3 + \left(-4q_1^2 + (2q_2 + p)q_1 - q_2p\right)p y^2 \\ + q_1 \left(2q_1^2 + (q_2 + p)q_1 - 2q_2p\right)y - q_1^2(q_1 + q_2).  \end{multline}

We claim that for fixed $y\in (0,1)$, $g_{p,q_1,q_2}(1,y) < g_{p,q_1,0}(1,y)$.  To prove this, it is sufficient to show that $g_{p,q_1,q_2+1}(1,y) < g_{p,q_1,q_2}(1,y)$.  We compute that $$g_{p,q_1,q_2+1}(1,y)-g_{p,q_1,q_2}(1,y) = (y-1)(yp+q_1)^2.$$  Since $y-1<0$ and $yp+q_1 \neq 0$, we see that this expression is always negative.

So, to prove that $g_{p,q_1,q_2}(1,y)<0$, we now only need to show that $g_{p,q_1,0}(1,y)\leq 0$.  But $g_{p,q_1,0}(1,y) = -(p y - q_1)^2((-2q_1 + p)y + q_1)$.  Observe that the linear factor $(-2q_1 + p)y+q_1$ ranges between $q_1 > 0$ and $p-q_1 \geq 0$, so it is non-negative for $y\in (0,1)$.    Since $(py-q_1)^2\geq 0$, it follows that $g(1,y)\leq 0$ for all $y\in [0,1]$ when $q_2 = 0$, completing the proof in this case.

We finally consider $g(x,1)$, where \begin{multline} g(x,1) = p^2(q_1 - q_2)x^3 - (p^2 + (-4 q_1 - q_2)p + 6q_1^2 + 2q_1 q_2)px^2\\ - q_1(2p^2 + 2(-2q_1 - q_2)p - q_1(q_1 - q_2))x - q_1^2(-q_2 + p).\end{multline}

Observe that the condition $p> q_1 + q_2$ implies $q_2 < p-q_1$.   We claim  $g_{p,q_1,q_2}(x,1)< g_{p,q_1, p-q_1}(x,1)$ for fixed $x\in (0,1)$.  To establish this, it is sufficient to show that $g_{p,q_1,q_2}(x,1) < g_{p,q_1,q_2+1}(x,1)$, but $$g_{p,q_1,q_2}(x,1)-g_{p,q_1,q_2+1}(x,1) = (x-1)(px+q_1)^2.$$  Since $x\in (0,1)$ and $p,q_1 >0$, it follows that $(x-1)(px+q_1)^2 < 0$, establishing the claim.

So, to prove that $g_{p,q_1,q_2}(x,1) < 0$, it is enough to show that $g_{p,q_1,p-q_1}(x,1) \leq 0$.  But a straightforward calculation shows that $$g_{p,q_1,p-q_1}(x,1) = -(px - q_1)^2((-2q_1 + p)x + q_1).$$  The linear factor varies between $q_1 >0$  and $p-q_1 > 0$, so $g_{p,q_1,p-q_1}(x,1) \leq 0$.  This completes the proof that $g_{p,q_1,q_2}(x,1)<0$ for any $q_2 < p-q_1$, which, in turn, completes the proof of the proposition.

\end{proof}

\section{Free circle quotients of generalized Eschenburg spaces}\label{sec:t2}

A general $U(n-1)\times T^2$ biquotient action on $U(n+1)$ which normalizes the $U(n)\times T^3$-action of Proposition \ref{prop:cohom2} has the form $(C,z,w)\ast B = \diag(z^p w^t,1,...,1) B \diag(z^{q_1} w^{s_1}, z^{q_2} w^{s_2}, C)^{-1}$ where all the exponents are arbitrary integers.   It is well known that the unique such example with $z^p w^t  = 1$ for all $z$ and $w$ is the homogeneous space $U(n+1)/T^2\times U(n-1)$ and is diffeomorphic to the projectivized unit tangent bundle of $\mathbb{C}P^n$.  As Wilking showed this space to be almost positively curved \cite{Wi}, we will henceforth assume $z^p w^t\neq 1.$  By reparamaterizing the $T^2$, we may assume  $t=0$, so $p\neq 0$.  We assume without loss of generality that the action is effective, which obviously implies $\gcd(p,q_1,q_2) = \gcd(s_1,s_2) = 1$.








\begin{lemma}\label{lem:freeT2action}  An action $$(C,z,w) \ast B = \diag(z^p,1...,1) B \diag(z^{q_1} w^{s_1}, z^{q_2}, w^{s_2}, C)^{-1}$$ is free if and only if $$|q_1 s_2 - q_2s_1| = |(q_1-p)s_2-q_2s_1| = |q_1s_2-(q_2-p)s_1| = 1.$$
\end{lemma}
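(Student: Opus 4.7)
My plan is to reduce freeness of the $T^2\times U(n-1)$-action to the question of when three $2\times 2$ integer matrices act on $T^2$ with trivial kernel. A point $B\in U(n+1)$ has nontrivial stabilizer exactly when there exists $(z,w,C)\neq (1,1,I_{n-1})$ satisfying
$$\diag(z^p,1,\ldots,1)\,B = B\,\diag(z^{q_1}w^{s_1}, z^{q_2}w^{s_2}, C).$$
Setting $\alpha(z):=\diag(z^p,1,\ldots,1)$ and $\beta(z,w,C):=\diag(z^{q_1}w^{s_1}, z^{q_2}w^{s_2}, C)$, this is the conjugacy $\alpha(z)=B\beta(z,w,C)B^{-1}$. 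Since unitary matrices are diagonalizable, two elements of $U(n+1)$ are conjugate by \emph{some} element iff they have the same multiset of eigenvalues; moreover, given any matching of eigenvalues, a suitable $B$ exists. So freeness fails iff there exists $(z,w,C)\neq(1,1,I_{n-1})$ whose eigenvalue multiset for $\beta$ matches $\{z^p,1^n\}$.

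Next I would enumerate the matchings. Each of the two explicit diagonal entries $z^{q_1}w^{s_1}$ and $z^{q_2}w^{s_2}$ of $\beta$ must equal either $z^p$ or $1$, and the eigenvalues of $C$ account for the remaining $n-1$ slots. Case (A): both entries equal $1$, and $C$ is chosen to have multiset $\{z^p, 1^{n-2}\}$, for example $C = \diag(z^p, 1, \ldots, 1)\in U(n-1)$. Case (B): the first entry is $z^p$ and the second is $1$, forcing $C$ to have all $n-1$ eigenvalues equal to $1$, hence $C = I_{n-1}$. Case (C) is the mirror of (B), again forcing $C = I_{n-1}$. Case (D) would have both entries equal $z^p$, which requires two copies of $z^p$ in the multiset $\{z^p,1^n\}$; this forces $z^p=1$ and collapses into Case (A).

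In each case the constraint on $(z,w)$ is a homogeneous linear system over $T^2$ described by a $2\times 2$ integer matrix, namely
$$M_A = \begin{pmatrix} q_1 & s_1\\ q_2 & s_2\end{pmatrix},\quad M_B = \begin{pmatrix} q_1-p & s_1\\ q_2 & s_2\end{pmatrix},\quad M_C = \begin{pmatrix} q_1 & s_1\\ q_2-p & s_2\end{pmatrix}.$$
The induced endomorphism of $T^2=(\mathbb{R}/\mathbb{Z})^2$ has trivial kernel iff the absolute value of its determinant is $1$. These three determinants are exactly $q_1 s_2-q_2 s_1$, $(q_1-p)s_2 - q_2 s_1$, and $q_1 s_2 - (q_2-p)s_1$, the quantities in the lemma. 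Thus the action is free iff all three absolute values equal $1$.

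The only delicate point is confirming, in the failure direction, that any nontrivial element of $\ker M_A$, $\ker M_B$, or $\ker M_C$ really produces a nontrivial stabilizer. Cases (B) and (C) are immediate since they pair $(z,w)\neq(1,1)$ with $C=I_{n-1}$. In Case (A), if $z^p\neq 1$ the auxiliary $C$ exhibited above is itself nontrivial, while if $z^p=1$ then $C=I_{n-1}$ but $(z,w,I_{n-1})$ remains nontrivial because $(z,w)\neq(1,1)$. This bookkeeping is the only nonmechanical step, but it is entirely elementary and completes the reduction.
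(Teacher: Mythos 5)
Your proof is correct and follows the same core approach as the paper's: a stabilizing element forces $\alpha(z)$ and $\beta(z,w,C)$ to be unitarily conjugate and hence to share an eigenvalue multiset, and the three possible matchings produce exactly the three displayed determinant conditions. The only difference is organizational: you invoke the uniform criterion that a $2\times 2$ integer matrix acts on $T^2$ with trivial kernel iff its determinant is $\pm 1$, whereas the paper separately treats the subcases where a determinant is $0$ or has absolute value at least $2$ by exhibiting explicit fixed points, which amounts to the same thing.
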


\begin{proof}

We begin with the backwards implication.  So, assume that $$|q_1 s_2 - q_2s_1| = |(q_1-p)s_2-q_2s_1| = |q_1s_2-(q_2-p)s_1| = 1$$ and that $(C,z,w)\in U(n-1)\times T^2$ fixes $B\in U(n+1)$. Then, we find that $\diag(z^p,1...,1) = B \diag(z^{q_1} w^{s_1}, z^{q_2} w^{s_2}, C) B^{-1}$, so $\diag(z^p,1..,1)$ and $\diag(z^{q_1} w^{s_1}, z^{q_2} w^{s_2}, C)$ are conjugate.  Thus, these two matrices have the same multiset of eigenvalues.  So, either $z^{q_1} w^{s_1} = z^{q_2} w^{s_2} = 1$ or $$\{z^{q_1} w^{s_1}, z^{q_2} w^{s_2}\} = \{1, z^p\}.$$  In the first case, we find that $(z^{q_1} w^{s_1})^{s_2} = (z^{q_2} w^{s_2})^{s_1}$.  Dividing, we then conclude $z^{q_1 s_2 - q_2s_1} = 1$.  As $|q_1 s_2 - q_2s_1| = 1$, we find $z=1$.  Then $w^{s_1} = w^{s_2} = 1$ so $w^{\gcd(s_1,s_2)} = 1$.  But, since $|q_1s_2 - q_2s_1| = 1$, we have $\gcd(s_1,s_2) = 1$, so $w= 1$ as well.  The case where $\{z^{q_1} w^{s_1}, z^{q_2}w^{s_2}\} = \{z^p, 1\}$ works similarly using $|(q_1-p)s_2-q_2s_1|  = |q_1 s_2 - (q_2-p)s_1| = 1$.

We now prove the forwards implication.  Assume that the action is free, (so it is also effective) and, for a contradiction, assume at least one of $|q_1 s_2 - q_2s_1|,  |(q_1-p)s_2-q_2s_1| , |q_1s_2-(q_2-p)s_1| $ is not $1$.  We will assume, in particular, that $|q_1 s_2 - (q_2 - p)s_1| \neq 1$, the other two possibilities being very similar.  If $|q_1 s_2 - (q_2 - p)s_1|= 0$, then setting $(z,w) = (u^{s_2}, u^{p-q_2})$ for any $u\in S^1$, we find that $(I,z,w)\ast B_0 = B_0$ where $B_0$ is the matrix obtained by swapping the first two rows of the identity matrix $I$.  Thus, freeness implies $(u^{s_2}, u^{p-q_2}) = (1,1)$ for all $u\in S^1$, so $s_2 = p-q_2 = 0$.  As $\gcd(s_1,s_2) = 1$ and $s_2= 0$, $|s_1| = 1\neq 0$.  We now note that for $(z,w) = (u^{s_1}, u^{-q_1})$, $(A,z,w)\ast B_0 = B_0$.  Since $s_1\neq 0$, we have contradicted the fact that the action is free.

We next assume that $d:=|q_1 s_2 - (q_2 - p)s_1| \geq 2$.  Then setting $(z,w) = (e^{\frac{2\pi i}{d} s_2}, e^{\frac{2\pi i}{d}(p-q_2)}$, we find that $B_0$ is fixed.  Freeness implies that $d|s_2$ and $d|(q_2-p)$.  Then setting $(z,w) = (e^{\frac{2\pi i}{d} s_1}, e^{-\frac{2\pi i}{d} q_1})$, we find that $B_0$ is fixed.  Because the action is free, $d|s_1$ and $d|q_1$.  As $d|s_1$ and $d|s_2$, we have contradicted the fact that $\gcd(s_1,s_2)=1$.
\end{proof}

\begin{proposition}\label{prop:t2}
    Assume $ p \not= 0$.  Then up to equivalence of action, there are at most two free $T^2$-actions on $U(n+1)/U(n-1)$, given by $$(p,q_1,q_2,s_1,s_2)\in \{ (2,0,1,1,0), (2,0,1,-1,1)\}.$$
\end{proposition}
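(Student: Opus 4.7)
The plan is to apply Lemma~\ref{lem:freeT2action} and enumerate the free actions up to natural equivalences. Set $A := q_1 s_2 - q_2 s_1$; the three freeness conditions become $|A| = |A - p s_2| = |A + p s_1| = 1$. Since $p \neq 0$, the equality $|A| = |A - p s_2|$ forces either $s_2 = 0$ or $p s_2 = 2A$, and similarly $s_1 = 0$ or $p s_1 = -2A$. Since $|A| = 1$, one cannot have $s_1 = s_2 = 0$.

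This leaves three cases. If $s_1 = 0$ and $s_2 \neq 0$, then $p = 2 q_1$ and $|q_1 s_2| = 1$, giving $|q_1| = |s_2| = 1$ with $q_2$ arbitrary. Symmetrically, if $s_2 = 0$ and $s_1 \neq 0$, then $p = 2 q_2$, $|q_2| = |s_1| = 1$, and $q_1$ is free. If both $s_1, s_2 \neq 0$, then $p s_2 = -p s_1$ forces $s_1 = -s_2$, which combined with $\gcd(s_1, s_2) = 1$ forces $|s_i| = 1$; expanding $A = (q_1 + q_2) s_2$ then yields $p = 2(q_1 + q_2)$ and $|q_1 + q_2| = 1$. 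In every case $|p| = 2$.

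To reduce to the two listed quintuples, I will use the following equivalences of the action: (a) the automorphism $z \mapsto z^{-1}$ of $T^2$, which sends $(p, q_1, q_2) \mapsto (-p, -q_1, -q_2)$; (b) the automorphism $w \mapsto w^{-1}$, which sends $(s_1, s_2) \mapsto (-s_1, -s_2)$; (c) the shear reparametrization replacing $w$ by $z^c w$ for $c \in \mathbb{Z}$, which shifts $(q_1, q_2) \mapsto (q_1 + c s_1, q_2 + c s_2)$ while keeping $(p, s_1, s_2)$ fixed; and (d) conjugation of the biquotient embedding by $(I, \sigma) \in U(n+1) \times U(n+1)$, where $\sigma$ transposes positions $1$ and $2$. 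Since $\sigma$ fixes $U(n-1)$ pointwise, the map $[B] \mapsto [B \sigma]$ descends to a $T^2$-equivariant self-diffeomorphism of $U(n+1)/U(n-1)$ that intertwines the original action with the action obtained by swapping $(q_1, s_1) \leftrightarrow (q_2, s_2)$ while leaving the left factor $\diag(z^p, 1, \ldots, 1)$ unchanged.

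Applying (a) normalizes $p = 2$, and equivalence (d) identifies the first two cases. In the merged case one then has $s_2 = 0$, $|s_1| = |q_2| = 1$, and $q_1$ arbitrary; after using (b) to set $s_1 = 1$ and ensuring $q_2 = 1$, the shear (c) sets $q_1 = 0$, yielding the representative $(2, 0, 1, 1, 0)$. In the third case, after using (b) to set $(s_1, s_2) = (-1, 1)$ and observing $q_1 + q_2 = 1$, the shear (c) allows us to set $q_1 = 0$ (whence $q_2 = 1$), yielding $(2, 0, 1, -1, 1)$. The main technical point is the justification of (d): one must verify that the transposition $\sigma$ actually centralizes $U(n-1)$ in $U(n+1)$ (so that $[B] \mapsto [B \sigma]$ descends to the quotient), and then compute $\sigma \diag(z^{q_1} w^{s_1}, z^{q_2} w^{s_2}, C) \sigma^{-1} = \diag(z^{q_2} w^{s_2}, z^{q_1} w^{s_1}, C)$ to confirm the claimed swap on parameters.
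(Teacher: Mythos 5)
Your proof is correct and follows essentially the same route as the paper's: apply Lemma~\ref{lem:freeT2action}, deduce from differences of the three conditions that $|p|=2$ and $|s_i|\le 1$, then split into cases according to which $s_i$ vanish and normalize by reparametrizations of the torus. The only noteworthy difference is that you spell out the equivalence (d) (conjugation by the transposition $\sigma$ swapping the first two coordinates in the right factor), whereas the paper merely asserts the two quintuples $(2,1,0,0,1)$ and $(2,0,1,1,0)$ are equivalent without giving the isomorphism; your justification is a helpful addition but the underlying argument is the same.
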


\begin{proof}

    Assume that $p\neq 0$ and that the action is free.  From Lemma \ref{lem:freeT2action},

    \begin{equation}\label{eqn:abs1}
        |q_1 s_2 - q_2s_1| = 1
    \end{equation}
    \begin{equation}\label{eqn:abs2}
        |(q_1-p)s_2-q_2s_1| = 1
    \end{equation}
    \begin{equation}\label{eqn:abs3}
        |q_1s_2-(q_2-p)s_1| = 1
    \end{equation}
    Subtracting \ref{eqn:abs1} and \ref{eqn:abs2}, we learn 
    \begin{equation}\label{eqn:abs4}
        p s_2 \in \{0, \pm 2\},
    \end{equation}
    and doing the same with \ref{eqn:abs1} and \ref{eqn:abs3} yields
    \begin{equation}\label{eqn:abs5}
        p s_1 \in \{0, \pm 2\}.
    \end{equation}
    Since $p \not= 0$, the only way for either expression to be 0 is if $s_1 = 0$ or $s_2 = 0$. From the assumption that $\gcd(s_1, s_2) = 1$, only one of $s_1$ and $s_2$ can be zero, and the other must be $\pm 1$. 

    If $s_1 = 0$, by reparameterization of the action we may assume $s_2 = 1$.  Moreover, replacing the coordinates $(z,w)$ with $(z,w')$, where $w' = z^{q_2}w$, we may assume $q_2 = 0$.  With $q_2 = 0$, we may further assume $q_1 \geq 0$.
    
    We now find from equation \eqref{eqn:abs2} that $q_1-p = \pm 1$ and from \eqref{eqn:abs1} and \eqref{eqn:abs3} that $q_1 =  1$. Since $p \not= 0$, we find $p = 2q_1 = 2$, yielding $(p,q_1,q_2,s_1,s_2) = (2,1,0,0,1)$. Analogously, if $s_1 =  1$ and $s_2 = 0$, we find $(p,q_1,q_2,s_1,s_2) = (2,0,1,1,0)$.   We note, however, that these two actions are equivalent.

    We may now assume that neither $s_1$ nor $s_2$ is zero.  Since $\gcd(s_1,s_2) = 1$, equations \eqref{eqn:abs4} and \eqref{eqn:abs5} imply that $p = \pm 2$ and $|s_1|=|s_2| = 1$.  We may again assume $s_2 = 1$.  We now subtract \ref{eqn:abs2} and \ref{eqn:abs3} to see that $p(s_1 + s_2) \in \{0, \pm2\}$ and conclude that $s_1 = -s_2 = -1$.   Equations \eqref{eqn:abs1} and \eqref{eqn:abs2} now imply that $|q_1 + q_2| = 1$ and that $p$ and $q_1 + q_2$ have the same sign.  Using the new coordinates $(z,w')$ where $w' =wz^{-q_1}$, we find $(p,q_1,q_2,s_1,s_2) = ( 2,0,1,-1,1)$ or $(-2,0,-1,-1,1)$.  The two actions are equivalent.

    Conversely, it is easy to see that each of these choices for $(p, q_1, q_2, s_1, s_2)$ satisfies \ref{eqn:abs1}, \ref{eqn:abs2}, and \ref{eqn:abs3}, so the action is free by Lemma \ref{lem:freeT2action}.
\end{proof}

\begin{theorem}Suppose $T^2$ acts on $U(n+1)/U(n-1)$ by either of the two actions in Proposition \ref{prop:t2}.  Then the quotient space admits a metric of almost positive sectional curvature.

\end{theorem}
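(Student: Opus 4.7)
The plan is to factor each of the two $T^2$-actions from Proposition \ref{prop:t2} as two commuting free $S^1$-actions: first quotient by the $z$-subaction to land in a generalized Eschenburg space already covered by Theorem \ref{thm:main2}, then quotient by the residual $S^1$, which will turn out to be isometric with respect to Wilking's metric.

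Both $T^2$-actions in Proposition \ref{prop:t2} share the same $z$-subaction $(C,z)\ast B = \diag(z^2, 1, \ldots, 1) B \diag(1, z, C)^{-1}$, which realizes the generalized Eschenburg space $\mathcal{E}_{2,0,1}$. Since $(p,q_1,q_2) = (2,0,1)$ satisfies $p = 2 \geq q_1+q_2 = 1 > 0$ and $q_1 = 0 \geq 0$, and is not one of the exceptional triples $(0,0,1), (0,1,0), (1,1,0)$, Theorem \ref{thm:main2} endows $\mathcal{E}_{2,0,1}$ with an almost positively curved Wilking metric. The residual $S^1$ then acts on $\mathcal{E}_{2,0,1}$ by $w \ast [B] = [B \diag(w^{-1}, 1, I_{n-1})]$ in the first case and by $w \ast [B] = [B \diag(w, w^{-1}, I_{n-1})]$ in the second. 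Both residual actions are restrictions of the $U(n) \times T^3$-action from Proposition \ref{prop:cohom2}, hence act by isometries of Wilking's metric; freeness of the residual action follows directly from freeness of the full $T^2$-action on $U(n+1)/U(n-1)$, since any stabilizer of the residual $S^1$-action on $\mathcal{E}_{2,0,1}$ lifts to a stabilizer of the full $T^2$-action.

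It remains to observe that a free isometric $S^1$-quotient of an almost positively curved manifold is almost positively curved. The quotient map $\pi : \mathcal{E}_{2,0,1} \to \mathcal{E}_{2,0,1}/S^1$ is a Riemannian submersion, and if $U \subseteq \mathcal{E}_{2,0,1}$ denotes the open dense set of points at which every plane is positively curved, then $\pi(U)$ is open in $\mathcal{E}_{2,0,1}/S^1$ (since Riemannian submersions are open maps) and dense (any nonempty open set in the quotient pulls back to a nonempty open set, which necessarily meets $U$). At any $[x] \in \pi(U)$, an arbitrary plane $\sigma \subseteq T_{[x]}(\mathcal{E}_{2,0,1}/S^1)$ lifts to a horizontal plane $\widetilde{\sigma}$ at some preimage $x \in U$, and the Gray-O'Neill formula gives $K(\sigma) \geq K(\widetilde{\sigma}) > 0$. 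The main content of the proof is thus the factoring and the identification of each residual action as a sub-action of the isometric $U(n) \times T^3$-action; the submersion step is entirely routine.
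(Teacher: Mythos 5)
Your proposal is correct and follows essentially the same route as the paper: factor each $T^2$-action as the $z$-sub-action (whose quotient is $\mathcal{E}_{2,0,1}$, which is almost positively curved by Proposition~\ref{prop:Ralg} via Theorem~\ref{thm:main2}) followed by a free isometric residual $S^1$-action, and then invoke O'Neill. The paper's proof is terser, simply asserting that the quotient is a free isometric $S^1$-quotient of $\mathcal{E}_{2,0,1}$ and citing O'Neill; you spell out why the residual $S^1$ is isometric (restriction of the $U(n)\times T^3$-action from Proposition~\ref{prop:cohom2}), why it is free (lift a stabilizer to the full $T^2$-action), and why O'Neill's formula transfers almost positive curvature across the submersion, all of which are correct and constitute useful elaboration rather than a different argument.
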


\begin{proof}Consider the generalized Eschenburg space $\mathcal{E}_{2,0,1}$.  This space admits a Wilking metric of almost positive sectional curvature by Theorem \ref{thm:main}.

For both $T^2$-actions given by $$(p,q_1,q_2,s_1,s_2) \in\{ (2,0,1,1,0), (2,0,1,-1,1)\},$$ we consider the sub-action by the $S^1\subseteq T^2$ defined by $w=1$.  This $S^1$ acts on $U(n+1)/U(n-1)$ with quotient $\mathcal{E}_{2,0,1}$.  It follows that $M$ is a free isometric $S^1$-quotient of $\mathcal{E}_{2,0,1}$.  Hence, by O'Neill's formula \cite{On1}, the induced metric is almost positively curved.






\end{proof}

\section{Quasi-positive curvature in generalized Eschenburg spaces}\label{sec:quasi}

We now prove one of the main results of \cite{longkerinpaper} with the corrected type (iv) planes.

\begin{theorem}[Kerin]\label{thm:quasi}  Every generalized Eschenburg space admits a metric of quasi-positive curvature.

\end{theorem}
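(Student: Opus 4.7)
My plan is to follow the outline of Kerin's original argument in \cite{longkerinpaper} and insert the additional analysis needed to handle the corrected form of type (iv) planes. By Proposition \ref{prop:switchmetric}, it suffices to work with Kerin's metric. The goal is to exhibit, for each admissible $(\vec{p},\vec{q})$, a specific point $[(B_0,I)] \in \mathcal{E}_{\vec{p},\vec{q}}$ at which the conditions of Proposition \ref{prop:Kcurvature} cannot be satisfied for any of the five types of $(X,Y)$ in Proposition \ref{prop:martin5.1}.

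First I would invoke Kerin's original analysis for types (i), (ii), (iii), and (v). These portions of his proof do not rely on the disputed reduction $\alpha=0$ in type (iv), and so they apply verbatim to produce a candidate point $B_0$ ruling out these four types simultaneously. The only new work is ruling out zero-curvature planes of type (iv) in its corrected form, where the top-left entry of $X$ is $i\alpha$ rather than $0$.

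For the corrected type (iv), I would apply Proposition \ref{prop:pregiveseqns} (with $\epsilon=1$): equation 1 is linear in $\alpha$, with coefficient $\lambda_1$ times a polynomial in the entries of $\operatorname{Ad}_{B_0}\diag(\vec p)$ and the constants $q_1, q_2$. At Kerin's point $B_0$, this coefficient is generically nonzero, so $\alpha$ is uniquely determined by the remaining entries of $X$ and $Y$. Substituting this expression for $\alpha$ back into equation 2 and the linear-dependence condition 3 then produces a polynomial system in the $x_j, y_j, \beta$ that is an augmentation of the system Kerin analyzed in the $\alpha=0$ case. I expect that Kerin's own overdetermination argument (or a mild extension of it) shows this system has no nontrivial solution, completing the proof at $B_0$.

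The main obstacle is the exceptional subcase in which the coefficient of $\alpha$ in equation 1 vanishes at $B_0$. There equation 1 becomes an $\alpha$-free constraint, and $\alpha$ remains a free real parameter. In that situation I would either (a) perturb $B_0$ slightly within the same $(U(n)\times T^3)$-orbit to make the coefficient nonzero without reintroducing a zero-curvature plane for the other four types (which is possible since the non-existence conditions for types (i)--(iii) and (v) are open), or (b) exploit the extra $\alpha$-free constraint as an additional equation, which tightens rather than loosens the system; combined with the linear-dependence condition on $(\operatorname{Ad}_{B^{-1}}\phi_1(X))_{\mathfrak p}$ and $(\operatorname{Ad}_{B^{-1}} Y)_{\mathfrak p}$, one again forces $X$ and $Y$ to be linearly dependent. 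The delicate bookkeeping here is the only real difficulty; everything else is carried by Kerin's existing argument together with the corrected linear analysis in $\alpha$.
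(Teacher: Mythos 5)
The broad strategy you outline — keep Kerin's framework, redo only the type (iv) analysis with $\alpha$ not assumed zero, and work at Kerin's candidate point — is indeed the right approach, and it's the one the paper takes. But there are two genuine gaps.

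First, the key algebraic device is absent. The paper's corrected version of Kerin's Lemma 5.3 (Proposition \ref{prop:fix}) does exactly what you propose at the level of manipulations: solve the linear system for $\lambda_1\alpha$, then for $\beta$, then substitute back. But what closes the argument is that the remaining constraint becomes a polynomial $a\cos^4 r + b\cos^2 r + c = 0$ with \emph{integer} coefficients $a,b,c$ depending on $(\vec p,\vec q)$, and that $\cos r$ is chosen to be \emph{transcendental}. Transcendence forces $a=b=c=0$, which then contradicts the hypotheses $p_1 \neq p_2$, $p_1+p_2 \neq q_1+q_2$. Your ``I expect Kerin's overdetermination argument shows this system has no nontrivial solution'' is not correct without this input: for a generic point $B_0$ and generic $(\vec p,\vec q)$, the system does admit a continuous one-parameter family of type (iv) solutions at typical points (this is precisely how the almost-positive-curvature analysis in Section 6 produces zero-curvature planes), so you need a special choice of $r$ to defeat it. Similarly, your ``exceptional subcase'' (vanishing coefficient of $\alpha$) is handled in the paper simply by the observation that $p_1 \neq p_2$ makes the denominator $p_1\cos^2 r + p_2\sin^2 r - q_1$ a nonconstant trigonometric polynomial, so it is nonzero for all but finitely many $r$; neither your option (a) nor (b) is needed, and neither is worked out.

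Second, you have not addressed the exceptional Eschenburg space. Proposition \ref{prop:fix} requires that, after permuting the entries of $\vec p$, one can achieve $p_1 \neq p_2$ and $p_1 + p_2 \neq q_1 + q_2$. Kerin observes this is possible unless $q_1 = q_2 = 0$ and $\vec p$ is a permutation of $\vec p_0 = (1,-1,\dots,\pm 1)$. That remaining space $\mathcal{E}_{\vec p_0, \vec q_0}$ requires a separate direct verification that there is no type (iv) plane at a specific point $A_0 = A(3\pi/4,\pi/4)$; in the paper's computation the would-be solution forces $\re(y_3)^2 \in \{-17/2,-4\}$, which is impossible. Your proposal omits this case entirely, and since it is precisely the case Kerin singled out in his own proof, the omission is not a detail.
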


In \cite{longkerinpaper}, type (iv) zero-curvature planes are only used in two places: in \cite[Lemma 5.3]{longkerinpaper} and in the verification of quasi-positive curvature in the special case of $\mathcal{E}_{\vec{p}_0,\vec{q}_0}$ where $\vec{p}_0 = (1,-1,-1,...,\pm 1)\in \{-1,1\}^{n+1}$, the number of times $-1$ appears is either $n$ or $\lfloor \frac{n+2}{2}\rfloor$, and $\vec{q}_0 = (0,0)$.  We provide new proofs for these cases, so that, in particular, \cite[Theorem B]{longkerinpaper} is still correct.

     We let $A(r) := A(\pi/2,r)\in \mathcal{F}$.  Kerin shows that except when $q_1 = q_2 = 0$ and $\vec{p}$ is a permutation of $\vec{p}_0$, the entries of $\vec{p}$ and $\vec{q}$ can be permuted so that $\mathcal{E}_{\vec{p},\vec{q}}$ has positive curvature at any $[(A(r),I)]$ for which $\cos^2 r$ is irrational.  However, as mentioned above, the type (iv) planes he works with are not general enough.  The following proposition, whose proof was provided by Kerin, therefore establishes that all Eschenburg spaces except $\mathcal{E}_{(1,-1,...,-1), (0,0)}$ admit quasi-positively curved metrics.

\begin{proposition}[Lemma 5.3 \cite{longkerinpaper}]\label{prop:fix}  Suppose $p_1\neq p_2$ and $p_1 + p_2 \neq q_1 + q_2$.  If $r$ is chosen so that $\cos r$ is transcendental then there are no zero-curvature planes of type (iv).
\end{proposition}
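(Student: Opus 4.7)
To prove the proposition, I would suppose for contradiction that a type (iv) zero-curvature plane exists at $[(A(r), I)]$ and derive a contradiction from Proposition \ref{prop:pregiveseqns}. First I compute $D := \mathrm{Ad}_{A(r)}\diag(\vec{p}) - \diag(q_1, q_2, 0, \ldots, 0)$: since $A(\pi/2, r)$ sends $e_1 \mapsto e_3$ and acts on $\spa(e_2, e_3)$ as a rotation mixing these vectors, the top-left $3\times 3$ block of $D$ has diagonal entries $p_2\sin^2 r + p_3\cos^2 r - q_1$, $p_2\cos^2 r + p_3\sin^2 r - q_2$, $p_1$, with off-diagonal entries $D_{12}=D_{21} = (p_3 - p_2)\sin r\cos r$; the remaining block is $\diag(p_4, \ldots, p_{n+1})$. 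After a suitable permutation, I assume the proposition's $p_1, p_2$ correspond to the two mixed entries (here $p_2, p_3$).

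Next, I analyze condition (3) of Proposition \ref{prop:pregiveseqns}. A short calculation shows that the top rows of $\mathrm{Ad}_{A(r)^{-1}}\phi_1(X)$ and $\mathrm{Ad}_{A(r)^{-1}}Y$, encoding their $\mathfrak{p}$-components, are $V = (-x_3\sin r, -x_3\cos r, 0, \ldots, 0)$ and $W = (y_3\cos r, -y_3\sin r, 0, \ldots, 0)$. Multiplying the two real-linear dependence equations by $\cos r$ and $\sin r$ respectively and subtracting, one obtains $\mu y_3 = 0$ for the real coefficient $\mu$ of $W$; combined with $\sin r, \cos r \ne 0$ (ensured by transcendence of $\cos r$), this forces $x_3 = 0$ or $y_3 = 0$. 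The type (iv) relation $x_3 = -ix_2 y_3$ with $x_2 \ne 0$ then yields $x_3 = y_3 = 0$.

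The key step is condition (2): $\Tr(iYD) = 0$ reduces to $-D_{11} - \beta D_{22} = 0$, since $Y$'s only diagonal contributions are at $(1,1)$ and $(2,2)$, and $D$'s off-diagonal entries at $(1, 2), (2, 1)$ pair with zero entries of $iY$. Expanding with $\sin^2 r = 1 - \cos^2 r$ yields
\begin{equation*}
\bigl[(p_2 - q_1) + \beta(p_3 - q_2)\bigr] + (p_2 - p_3)(\beta - 1)\cos^2 r = 0.
\end{equation*}
Viewed as a polynomial identity in $\cos^2 r$ with coefficients in $\mathbb{Z}[\beta]$, and using that $\beta$ is intrinsic to the plane (independent of $r$) while $\cos r$ is transcendental over $\mathbb{Q}$, both coefficients must vanish: $(p_2 - p_3)(\beta - 1) = 0$ and $(p_2 - q_1) + \beta(p_3 - q_2) = 0$. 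The hypothesis $p_1 \ne p_2$ (corresponding to $p_2 \ne p_3$) forces $\beta = 1$, and the second equation then gives $p_2 + p_3 = q_1 + q_2$, contradicting $p_1 + p_2 \ne q_1 + q_2$.

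The main obstacle is the transcendence step: rigorously arguing that the scalar equation must hold as a polynomial identity with vanishing coefficients, rather than being satisfied by some fine-tuned $\beta$ matched to the specific transcendental value of $\cos^2 r$. This is supported by the analogous analysis of condition (1), which after squaring the $\sin r\cos r$ term produces a polynomial in $\cos^2 r$ whose coefficients (nonnegative combinations of $\alpha^2$ and $\im(x_2)^2$ with integer weights) force $\alpha = 0$ and $\im(x_2) = 0$ by the same transcendence principle; these additional constraints over-determine the system and eliminate the possibility of fine-tuning $\beta$ against $\cos^2 r$.
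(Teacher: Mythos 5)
Your approach diverges substantially from the paper's and contains a gap that you yourself flag but do not repair.

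The central problem is the claim that $\beta$ is ``intrinsic to the plane (independent of $r$).'' This is false: $\beta$ (and likewise $\alpha$, $x_2$, the $y_j$) parametrize the candidate zero-curvature plane at the specific point $[(A(r),I)]$, and they vary with $r$. In fact your scalar equation
\[
(q_1-p_2)+\beta(q_2-p_3) + (1-\beta)(p_2-p_3)\cos^2 r = 0
\]
can be solved for $\beta$ as a rational function of $\cos^2 r$ for almost every $r$, so the two coefficients need not vanish individually. Your proposed patch via condition (1) does not close this: squaring the $\sin r\cos r$ term produces a polynomial in $\cos^2 r$ whose coefficients involve the real (not integer) quantities $\lambda_1\alpha$ and $\im(x_2)$, so the transcendence principle --- which applies to polynomial relations with rational coefficients --- does not force them to vanish. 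The paper resolves this by doing the opposite of fixing the parameters: it first solves (MK4.4) together with $x_j=-ix_2 y_j$ to get $x_2 = i\tan r$, then solves (MK4.2) for $\lambda_1\alpha$ and (MK4.3) for $\beta$ as explicit rational functions of $\cos r$ (with coefficients in $\mathbb{Z}[p_i,q_i]$), and only then substitutes everything into (MK4.1). The result is a single rational identity $g/h=0$ where $g=a\cos^4 r + b\cos^2 r + c$ has \emph{integer} coefficients $a,b,c$; transcendence of $\cos r$ then legitimately forces $a=b=c=0$, and chasing those conditions yields the contradiction. This reduction to a polynomial with integer coefficients is the key idea you are missing.

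A secondary issue: you work at $A(\pi/2,r)$, where $\cos t = 0$ annihilates most terms of $V$ and $W$. Your own computation then forces $x_3 = y_3 = 0$, hence $V = W = 0$ --- but that is exactly the degenerate subcase ($V=0$, $W=0$) that the paper explicitly sets aside as already handled by Kerin, and for which the new argument is not needed. The equations (MK4.1)--(MK4.4) used in the paper's proof match the point $A(0,r)$ (a rotation in the $(1,2)$-plane), where $V=W\neq 0$ with some $y_j\neq 0$ is genuinely possible and requires the new analysis. So even the algebraic setup you begin from is not the one the proposition's nontrivial content lives in.
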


\begin{proof}  Suppose there is such a zero-curvature plane.   Then Proposition \ref{prop:pregiveseqns} applies.  Kerin's proof correctly handles the cases where $V=0$, $W=0$, or when $y_j=0$ for all $j\geq 3$.  After rescaling $X$ if necessary, we may assume $V = W$ and that some $y_j\neq 0$.  Without loss of generality, we assume $y_3\neq 0$.  Then Proposition \ref{prop:pregiveseqns} gives the following equations:

    \begin{equation}\label{eqn:MK4.1}
        -{\left(\beta - 1\right)} \left( p_{1} - p_{2}\right) \cos^{2}r + \beta p_{1} - \beta q_{2} + p_{2} - q_{1} = 0
    \end{equation}
    \begin{equation}\label{eqn:MK4.2}
        \lambda_1 \alpha p_{1} \cos^{2}r + \lambda_1 \alpha p_{2} \sin^{2}r + 2 {\left(p_{1} - p_{2}\right)} \cos r \im x_{2}  \sin r - \lambda_1 \alpha q_{1} = 0
    \end{equation}
    \begin{equation}\label{eqn:MK4.3}
        -i \lambda_1 \alpha \cos r \sin r + x_{2} \cos^{2} r + \overline{x_{2}} \sin^{2} r = i{\left(\beta - 1\right)} \cos r \sin r
    \end{equation}
    \begin{equation}\label{eqn:MK4.4}
        x_{3} \cos r = y_{3} \sin r
    \end{equation}

   Then equations \eqref{eqn:4.9} and \eqref{eqn:MK4.4} together give $x_2 = i\tan r.$   Substituting this value into equation \ref{eqn:MK4.2}, we solve for $\lambda_1\alpha$, finding $$\lambda_1 \alpha = \frac{-2(p_1-p_2)\sin^2 r}{p_1\cos^2 r+p_2\sin^2 r - q_1}.$$  We note that since $p_1\neq p_2$ by assumption, the denominator of $\lambda_1 \alpha$ is not identically zero.
    
    Now, we substitute the formulas for $x_2$ and $\lambda_1\alpha$ into equation \eqref{eqn:MK4.3} and solve for $\beta$; we then substitute this formula for $\beta$ into \eqref{eqn:MK4.1} to obtain a rational equation of the form $\frac{g}{h} = 0$ where
    \begin{itemize}
        \item $g = a\cos^4(r) + b\cos^2 r + c$ with 
        \begin{itemize}
            \item[•] $a = (p_{1} - p_{2}) (q_{1} - q_{2})$,
            \item[•] $b = p_{1}^{2} + 3 p_{1} p_{2} - {\left(4 p_{1} + p_{2}\right)} q_{1} + q_{1}^{2} - {\left(p_{1} + 2 p_{2} - 3 q_{1}\right)} q_{2}$, and
            \item[•] $c = (q_1-p_2)(p_1-q_2)$; and
        \end{itemize}
        \item $h = {\left(p_{1} - p_{2}\right)} \cos^{4}\left(r\right) + {\left(p_{2} - q_{1}\right)} \cos^{2}\left(r\right)$.
    \end{itemize}

    By assumption, $h$ is not identically zero.  So, the vanishing of $\frac{g}{h}$ implies the vanishing of $g$.  Since $ax^4 + bx^2 + c$ is a polynomial  with integer coefficients and the transcendental number $\cos r$ is a zero of it, we conclude that $a=b=c= 0$.  Since $p_1\neq p_2$, the fact that $a=0$ implies that $q_1 = q_2$.  The fact that $c=0$ implies either $p_2 = q_1$ or $p_1 = q_2$.  By permuting the $p_i$, we may assume that $p_2 = q_1$.  Further, by subtracting $q_1$ from all entries of both $\vec{p}$ and $\vec{q}$, we may assume that $p_2 = q_1 = 0$.  Then $0 = b = p_1(p_1-q_2)$.  Admissibility implies $1=\gcd(p_2-q_1, p_1 - q_2)   = \gcd(0,p_1-q_2)$, so $|p_1-q_2| = 1$.  Then $0 = p_1(p_1-q_2)$ implies $p_1 = 0$, which contracts the fact that $p_1 = p_2$.

\end{proof}

We conclude this section by considering the Eschenburg space $\mathcal{E}_{\vec{p}_0, \vec{q}_0}$.  At the end of \cite[Section 5]{longkerinpaper}  Kerin verifies that for the matrix $A_0:=A(3\pi/4,\pi/4)$, $[(A_0,I)]$ is a point of positive curvature for this Eschenburg space.  We verify it for the more general type (iv) plane now, completing the proof of Theorem \ref{thm:quasi}.

\begin{proposition}The Eschenburg space $\mathcal{E}_{\vec{p}_0, \vec{q}_0}$ has positive curvature at the point $A_0$.
\end{proposition}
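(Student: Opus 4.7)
The plan is to rule out type (iv) zero-curvature planes at $[(B_0, I)]$, where $B_0 := \diag(A_0, I_{n-2})$, by working directly with Proposition \ref{prop:pregiveseqns}. Types (i), (ii), (iii), (v), together with the restricted form of type (iv) (with $\alpha = 0$ and all off-block entries vanishing), have already been checked at $A_0$ by Kerin at the end of \cite[Section 5]{longkerinpaper}, so the task is to handle the general type (iv) planes that his original argument did not address.

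Since $\vec{q}_0 = (0, 0)$, conditions (1) and (2) of Proposition \ref{prop:pregiveseqns} simplify to the trace equations $\Tr(i\phi_1(X)\, Ad_{B_0} D) = 0$ and $\Tr(iY\, Ad_{B_0} D) = 0$, where $D := \diag(\vec{p}_0)$. Because $B_0$ acts non-trivially only on the first three coordinates, $Ad_{B_0} D$ is the direct sum of the $3\times 3$ matrix $A_0 \diag(1, -1, -1) A_0^{-1}$ and the diagonal lower block $\diag(p_4, \ldots, p_{n+1})$. Using the explicit entries of $A_0$---namely $\cos(3\pi/4) = -\tfrac{\sqrt{2}}{2}$, $\sin(3\pi/4) = \tfrac{\sqrt{2}}{2}$, and $\cos(\pi/4) = \sin(\pi/4) = \tfrac{\sqrt{2}}{2}$---one can compute this top block concretely, and thereby convert the two trace equations into explicit linear relations among the parameters $\alpha, x_2, x_3, \beta, y_3$, and the higher entries $x_j, y_j$.

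Condition (3) of Proposition \ref{prop:pregiveseqns}---the linear dependence of the $\mathfrak{p}$-components of $Ad_{B_0^{-1}}\phi_1(X)$ and $Ad_{B_0^{-1}} Y$---provides the remaining constraints. I would first inspect the columns indexed by $j \geq 4$, which $Ad_{B_0^{-1}}$ leaves unchanged; these columns, together with the type (iv) relation $x_j = -i x_2 y_j$, force the proportionality constant between the two $\mathfrak{p}$-components to equal $-i x_2$. The columns at positions 2 and 3 are rotated non-trivially by $A_0^{-1}$, yielding a further set of algebraic relations among $\alpha, x_2, \beta, y_3$. Combined with the normalization $\beta = 1 - \sum_{j\geq 3}|y_j|^2$ and the two trace equations, the resulting system is over-determined, and I would show it has no solution with $X, Y$ linearly independent.

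The main obstacle is handling the two sub-cases for $\vec{p}_0$ uniformly. In the case where all entries other than $p_1$ equal $-1$, subtracting $-1$ from every entry of $\vec{p}_0$ and $\vec{q}_0$ recasts the space in the cohomogeneity-two form $\mathcal{E}_{2, 1, 1}$; Proposition \ref{prop:fpurpose} then applies, and a direct evaluation of $f_{2,1,1}$ at $(\cos^2(3\pi/4), \cos^2(\pi/4)) = (1/2, 1/2)$ yields a negative value, immediately ruling out a type (iv) plane at $A_0$. In the balanced mixed-sign case the cohomogeneity-two framework does not apply, but the same strategy via Proposition \ref{prop:pregiveseqns} should succeed, since the additional $\pm 1$ entries in the lower block of $D$ only add further constraints to the trace equations rather than relaxing them.
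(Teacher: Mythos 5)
Your overall strategy—ruling out the general type (iv) zero-curvature planes at $A_0$ by working directly with Proposition \ref{prop:pregiveseqns}—is the same strategy the paper uses. But the proposal as written has several genuine gaps.

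First, the ``$\mathcal{E}_{2,1,1}$ shortcut'' does not quite finish the job. You are correct that in the all-$(-1)$ sub-case, adding $1$ to all entries of $\vec p_0$ and $\vec q_0$ gives $\mathcal{E}_{2,1,1}$, that $A_0$ is orbit-equivalent under the cohomogeneity-two action to the point $A(\pi/4,\pi/4)\in\mathcal{F}$, and that $f_{2,1,1}(1/2,1/2)<0$. But $f$ only detects the \emph{last} type-(iv) sub-case: $V=sW$ with $y_j=0$ for all $j\geq 4$. The derivation of $f$ in Proposition \ref{prop:fpurpose} relied on Propositions \ref{prop:caseivsubcase1.1}, \ref{prop:caseivsubcase1.2}, and \ref{prop:caseivsubcase2} to push the remaining type-(iv) sub-cases ($V=0$, $W=0$, or some $y_j\neq 0$) onto a \emph{nowhere dense} set—not an empty one. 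Since the present proposition asserts positive curvature at a \emph{specific} point, you must still verify that $A_0$ (equivalently $A(\pi/4,\pi/4)$) does not lie in those nowhere dense exceptional sets. The paper does this explicitly by rerunning the linear-system arguments at $(t,r)=(3\pi/4,\pi/4)$; the polynomial shortcut skips it.

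Second, your handling of the mixed-sign sub-case of $\vec p_0$ rests on an incorrect claim. You say the $\pm1$ entries in the lower block of $D=\diag(\vec p_0)$ ``only add further constraints to the trace equations.'' In fact they add \emph{no} constraints: for type (iv) matrices $X$, $Y$, the nonzero entries all lie in the first row/column and the diagonal position $(1,1)$ and the second row/column, while $Ad_{A_0}D$ is block diagonal with a nontrivial top $3\times 3$ block and a diagonal lower block; the traces $\Tr(i\phi_1(X)Ad_{A_0}D)$ and $\Tr(iYAd_{A_0}D)$ therefore depend only on $p_1,p_2,p_3=(1,-1,-1)$, which is identical in both sub-cases of $\vec p_0$. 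This is the correct reason the two sub-cases are handled uniformly, and it makes the dichotomy you set up unnecessary. It also means your $\mathcal{E}_{2,1,1}$ reduction does not actually apply to the mixed-sign case, but the direct verification does (unchanged).

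Third, the proposal stops at ``I would show it has no solution.'' The crux of the paper's proof is precisely that computation: solve the linearized system at $A_0$ for $(\lambda_1\alpha,\beta,x_2,x_3)$ in terms of $y_3$, then use \eqref{eqn:4.9} and \eqref{eqn:4.8} to force $\im(y_3)\in\{\sqrt2,3\sqrt2/2\}$ and hence $\re(y_3)^2\in\{-17/2,-4\}$, a contradiction. Without carrying out this computation (or the $f$-evaluation \emph{plus} the exceptional-set check), the claim remains unproved. Finally, a small technical point: the proportionality constant in the $j\geq4$ columns of $Ad_{A_0^{-1}}\phi_1(X)$ versus $Ad_{A_0^{-1}}Y$ is $-ix_2\cot r$, not $-ix_2$; this happens to equal $-ix_2$ at $r=\pi/4$, but the general statement as you wrote it is off.
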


\begin{proof}

For this choice of $(r,t)$ and $\vec{p}_0,\vec{q}_0$, the first two equations of Proposition \ref{prop:pregiveseqns} yield \begin{equation}\label{eqn:last1} \sqrt{2} \im (y_{3}) - \frac{1}{2}   \beta - \frac{1}{2} = 0\end{equation} and \begin{equation}\label{eqn:last2}-\frac{1}{2}   \lambda_1 \alpha - \sqrt{2} \im (x_{3}) - \im (x_{2}) = 0.\end{equation}

Following the same approach as in proofs of Propositions \ref{prop:caseivsubcase1.1}, \ref{prop:caseivsubcase1.2}, and \ref{prop:caseivsubcase2}, we conclude that $\mathcal{E}_{\vec{p}_0,\vec{q}_0}$ has no zero-curvature planes of type (iv) except possibly when $V=W$ and $y_j = 0$ for all $j\geq 4$.

Then the third condition of Proposition \ref{prop:pregiveseqns} gives the following equations.

\begin{equation} \label{eqn:last3}-\frac{1}{4} i  \sqrt{2} \beta + \frac{1}{4} i  \sqrt{2} + \frac{1}{2}  \overline{y_{3}} = \frac{1}{4} i  \sqrt{2} \lambda_1 \alpha + \frac{1}{2}  \sqrt{2} \re(x_{2}) + \frac{1}{2}  \overline{x}_{3}\end{equation}

\begin{equation} \label{eqn:last4} -\frac{1}{2}  \sqrt{2} \re(y_{3}) + \frac{1}{4} i  \beta + \frac{1}{4} i = \frac{1}{4} i  \lambda_1 \alpha + \frac{1}{2}  \sqrt{2} \re(x_{3})  - \frac{1}{2} \im( x_{2})i\end{equation}

Now, we solve equations \eqref{eqn:last1}, \eqref{eqn:last2}, \eqref{eqn:last3}, and \eqref{eqn:last4} to express $\lambda_1 \alpha, \beta, x_2,$ and $ x_3$ in terms of $y_3$.  We find:

$$\begin{bmatrix}
    \lambda_1 \alpha\\ \beta\\ \re (x_2) \\  \im (x_2) \\ \re (x_3)  \\ \im (x_3)\end{bmatrix} = \begin{bmatrix} 1 - \sqrt{2}\im (y_3) \\ -1 + 2\sqrt{2}\im (y_3) \\ \sqrt{2}\re (y_3) \\ \frac{1}{2}-\frac{3\sqrt{2}}{2}\im (y_3) \\ -\re (y_3) \\ -\frac{\sqrt{2}}{2}+2\im (y_3)\end{bmatrix}.$$

    Substituting these into equation \eqref{eqn:4.9} and taking the imaginary part, we find $$-\frac{\sqrt{2}}{2} + 2\im(y_3) = -\sqrt{2}\re(y_3)^2 + \frac{\im(y_3)}{2} - \frac{3\sqrt{2}\im(y_3)^2}{2}.$$  Using the fact that $\beta = 1-|y_3|^2$ to express $\re(y_3)^2$ solely in terms of $\im(y_3)$, this equation leads to $\im(y_3)\in \{\sqrt{2}, \frac{3\sqrt{2}}{2}\}$.

    Then equation \eqref{eqn:4.8}, when solved for $\re(y_3)^2$, gives $$\re(y_3)^2 = 1-\left(-1+2\sqrt{2} \im(y_3)\right) - \im(y_3)^2 \in \{-17/2,-4 \}.$$  This contradicts the fact that $\re(y_3)$ is a real number, complteing the proof that there are no type (iv) zero-curvature planes. 
    
    \end{proof}



\section{Strong inhomongeneity}\label{sec:top}

In this section, we prove that in each dimension of the form $4n-1\geq 23$, there are infinitely many generalized Eschenburg spaces that simultaneously are strongly inhomogeneous and admit metrics of almost positive sectional curvature.

\begin{lemma}\label{lem:s1homotopy} Suppose $M$ and $N$ are $2$-connected CW complexes  each admitting a free circle action.  Assume that the orbit spaces $M/S^1$ and $N/S^1$ are homotopy equivalent.  Then $M$ and $N$ are homotopy equivalent.
\end{lemma}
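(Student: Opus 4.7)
The plan is to realize $M$ and $N$ as total spaces of principal $S^1$-bundles over $B := M/S^1$ and $B' := N/S^1$, and to use the classification of such bundles by their Euler classes. First I would compute the low homotopy of $B$: the long exact homotopy sequence of the fibration $S^1 \to M \to B$, combined with $\pi_1(M) = \pi_2(M) = 0$, yields $\pi_1(B) = 0$ and $\pi_2(B) \cong \pi_1(S^1) \cong \mathbb{Z}$. Since $B$ is simply connected, Hurewicz and universal coefficients then give $H^2(B;\mathbb{Z}) \cong \mathbb{Z}$, and the same holds for $B'$.

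Next I would identify the Euler classes $e \in H^2(B;\mathbb{Z})$ and $e' \in H^2(B';\mathbb{Z})$ of the bundles $M \to B$ and $N \to B'$ as generators. This follows from the Gysin sequence, whose relevant segment
\[ H^0(B;\mathbb{Z}) \xrightarrow{\cup e} H^2(B;\mathbb{Z}) \to H^2(M;\mathbb{Z}) = 0 \]
forces $\cup e$ to be surjective, so $e$ generates $H^2(B;\mathbb{Z})$; likewise for $e'$.

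Now let $\varphi : B \to B'$ be a homotopy equivalence given by hypothesis. The induced map $\varphi^* : H^2(B';\mathbb{Z}) \to H^2(B;\mathbb{Z})$ is an isomorphism and sends the generator $e'$ to $\pm e$. If the sign is negative, I would reverse the $S^1$-action on $N$ (replace $z \cdot n$ by $z^{-1}\cdot n$), which keeps the underlying space and the orbit space unchanged but negates the Euler class. After this adjustment we may assume $\varphi^*(e') = e$. Since principal $S^1$-bundles over a CW complex $B$ are classified by $H^2(B;\mathbb{Z}) = [B,BS^1]$, the pullback bundle $\varphi^* N \to B$ is isomorphic to $M \to B$ as principal $S^1$-bundles; in particular their total spaces are homeomorphic, so $M \cong \varphi^* N$. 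Finally, because $\varphi$ is a homotopy equivalence, the homotopy pullback property for principal $S^1$-bundles gives $\varphi^* N \simeq N$, so $M \simeq N$.

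The only subtle point I foresee is the sign ambiguity in $\varphi^*(e') = \pm e$, which is resolved cleanly by the action-reversal trick above; the remainder is routine obstruction theory for principal circle bundles together with the long exact homotopy sequence and the Gysin sequence.
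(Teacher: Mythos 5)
Your proof is correct, but it takes a genuinely different route from the paper's. The paper pulls back the bundle $N\to N/S^1$ along the composite $f\circ\pi_M\colon M\to N/S^1$, observes that the resulting Euler class lives in $H^2(M)=0$ so the pullback over $M$ is trivial, takes a section, and then checks via Whitehead's theorem (using the long exact homotopy sequences of the two bundles) that the resulting map $M\to N$ is a weak equivalence. You instead work over the base: you compute $H^2(M/S^1)\cong\mathbb{Z}$ from the long exact sequence, use the Gysin sequence to see the Euler class is a generator, match Euler classes (with the action-reversal trick to fix the sign), and then invoke the classification of principal circle bundles plus the fact that pulling back a fibration along a homotopy equivalence of bases gives a homotopy equivalence of total spaces. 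Your approach buys explicitness---you identify the bundles up to isomorphism, not just up to the existence of a weak equivalence---at the cost of needing to track the sign ambiguity and carry out the Hurewicz/Gysin computations. The paper's approach sidesteps the sign issue entirely (the pulled-back Euler class vanishes regardless of orientation) and avoids computing $H^2(B)$, but only produces a map rather than an identification of bundles. One remark: the sign adjustment you make is careful but not strictly necessary for the conclusion, since the total spaces of the circle bundles with Euler classes $e$ and $-e$ over the same base are homeomorphic via the inversion automorphism of $S^1$; still, your fix is clean and keeps the bundle isomorphism explicit.
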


\begin{proof}  Let $\pi_M,\pi_N$ denote the two projection maps and let $f:M/S^1\rightarrow N/S^1$ be a homotopy equivalence.  Let $(\xi,\pi)$ denote the pullback of the $S^1$-bundle $N\xrightarrow{\pi_N} N/S^1$ under the map $(f\circ \pi_M):M\rightarrow N/S^1$.  Being a pullback, there is a continuous map $\tilde{f}:\xi\rightarrow N$ for which $\pi_N\circ \tilde{f} = f\circ \pi_M\circ \pi$.

The Euler class of $\xi$ is an element of $H^2(M)$ which vanishes by assumption, so $\xi$ is trivial.  Thus, there is a section $s:M\rightarrow \xi$.  We therefore obtain a map $\tilde{f}\circ s:M\rightarrow N$.  We claim that this map is a homotopy equivalence.

As $M$ and $N$ are $2$-connected, by Whitehead's Theorem it is sufficient to verify that $\tilde{f}\circ s$ induces isomorphisms on the higher homotopy groups $\pi_k$ for all $k\geq 3$.

To that end, we observe that from the long exact sequence in homotopy groups associated to the bundles $M\rightarrow M/S^1$ and $N\rightarrow N/S^1$, it follows that both $\pi_M$ and $\pi_N$ induce isomorphisms on $\pi_k$ for all $k\geq 3$.  Moreover, $f$, being a homotopy equivalence, also induces an isomorphism for all $k\geq 3$.  But then for any $k\geq 3$, $(\tilde{f}\circ s)_\ast = (\pi_N)_\ast^{-1}\circ f_\ast \circ (\pi_M)_\ast:\pi_k(M)\rightarrow \pi_k(N)$, so $(\tilde{f}\circ s)_\ast$ is a composition of isomorphisms and hence is an isomorphism.

\end{proof}

\begin{proposition}\label{prop:inequivalent} Suppose that $\mathcal{E}_{p,q}$ is homotopy equivalent to a homogeneous space $G/H$ where no proper normal subgroup of $G$ acts transitively on $G/H$.  If $\dim \mathcal{E}_{p,q}\geq 23$, then, up to finite cover, $(G,H) = (SU(n+1), SU(n-1)\times S^1)$.

\end{proposition}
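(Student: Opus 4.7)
The plan is to apply Lemma~\ref{lem:s1homotopy} to lift both sides of the homotopy equivalence $\mathcal{E}_{p,q}\simeq G/H$ to 2-connected principal circle covers, and then to use the classification of simply connected compact homogeneous spaces of the homotopy type of the complex Stiefel manifold $V_2(\mathbb{C}^{n+1}):=U(n+1)/U(n-1)$.

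First, $V_2(\mathbb{C}^{n+1})$ is 2-connected, being the total space of an $S^{2n-1}$-bundle over $S^{2n+1}$, and $\mathcal{E}_{p,q}$ is the base of a free $S^1$-action on it, so the homotopy long exact sequence gives $\pi_1(\mathcal{E}_{p,q})=0$ and $\pi_2(\mathcal{E}_{p,q})\cong \mathbb{Z}$. Passing to the universal cover of $G$ preserves both the conclusion ``up to finite cover'' and the effectiveness hypothesis, so I may assume $G$ is simply connected and acts almost effectively; then $\pi_1(H_0)\cong \pi_2(G/H)\cong \mathbb{Z}$. The structure theorem for compact Lie groups forces $H_0$ to be isogenous to $K\times S^1$ for some compact simply connected semisimple $K$, with the $S^1$-factor central in $H_0$. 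Letting $H'\le H$ be the codimension-one closed subgroup with $H_0'\cong K$ (and the same component group as $H$), the quotient $G/H'\to G/H$ is a principal $S^1$-bundle whose total space is 2-connected. Lemma~\ref{lem:s1homotopy}, applied to this bundle and to $V_2(\mathbb{C}^{n+1})\to \mathcal{E}_{p,q}$, then gives $G/H'\simeq V_2(\mathbb{C}^{n+1})$.

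The heart of the proof is the classification claim: \emph{up to finite cover, the only simply connected compact homogeneous space $G/H'$ with almost effective $G$-action, no proper normal subgroup of $G$ transitive, and $G/H'\simeq V_2(\mathbb{C}^{n+1})$ is $SU(n+1)/SU(n-1)$, provided $n\ge 6$.} To prove this I would work rationally: since $V_2(\mathbb{C}^{n+1})\simeq_{\mathbb{Q}} S^{2n-1}\times S^{2n+1}$, the rational Poincar\'e polynomial of $G/H'$ is $(1+t^{2n-1})(1+t^{2n+1})$, and $\chi(G/H')=0$ forces $\operatorname{rank} G>\operatorname{rank} H'$. The Cartan--Koszul model for $H^*(G/H';\mathbb{Q})$ together with the known rational cohomology of compact simple Lie groups pins down the rational type of $(\mathfrak{g},\mathfrak{h}')$, and a case analysis over the compact simple Lie algebras (using $n\ge 6$ to rule out sporadic low-dimensional coincidences, including the $Sp$-family when $n+1$ is even) forces $(\mathfrak{g},\mathfrak{h}')\cong(\mathfrak{su}(n+1),\mathfrak{su}(n-1))$ with the standard block embedding. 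Restoring the central $S^1$-factor yields $(G,H)=(SU(n+1),SU(n-1)\cdot S^1)$ up to finite cover.

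The main obstacle I expect is this final classification step. While the rational Poincar\'e polynomial restricts $(\mathfrak{g},\mathfrak{h}')$ considerably, ruling out all other compact simple $G$ and all alternative embeddings $H'\hookrightarrow G$ that produce the same rational homotopy type amounts to invoking Onishchik's classification of transitive actions on Stiefel manifolds restricted to $V_2(\mathbb{C}^{n+1})$ (equivalently, of simply connected homogeneous spaces rationally equivalent to a product of two odd spheres of the given degrees). The dimension hypothesis $\dim\geq 23$, i.e.\ $n\ge 6$, is precisely what one needs to discard the low-dimensional exceptions appearing in this classification.
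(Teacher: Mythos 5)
Your outline matches the paper's architecture at a high level --- pass to a $2$-connected principal $S^1$-cover on both sides, invoke Lemma~\ref{lem:s1homotopy}, and then classify the compact homogeneous spaces that can appear --- but the execution diverges in two places, one cosmetic and one substantive.

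The cosmetic divergence is in how the $S^1$-cover of $G/H$ is built. You construct $H'\le H$ by splitting a circle out of $H_0$ and take $G/H'\to G/H$. This works in spirit, but establishing that such an $H'$ exists with $H/H'\cong S^1$, that $H'\lhd H$, and that everything remains clean after passing to finite covers of $G$ and of $K$ requires care (for instance, when you replace $G$ by its universal cover you must also argue away torus factors of $G$, which your ``no proper normal subgroup acts transitively'' hypothesis does not immediately exclude). The paper instead constructs the cover as the total space $N$ of a principal $S^1$-bundle over $G/H$ whose Euler class generates $H^2(G/H)\cong\mathbb{Z}$, lifts the $G$-action to $N$ via a theorem of Palais--Stewart, and then uses a lemma of Kamerich--Zvengrowski to upgrade the $G\times S^1$-transitivity to $G$-transitivity. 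This sidesteps all of the splitting and normality questions and delivers $N\cong G/H''$ cleanly.

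The substantive gap is the classification step, which you yourself flag as ``the main obstacle.'' This is in fact where essentially all of the proof lives. The paper first uses the fact that $N$ is at least $10$-connected together with Onishchik's structure theorems to conclude $G$ is simple or a product of exactly two simple groups. In the simple case it extracts from Onishchik's Table 11 that $(G,H'')$ must be one of $(SU(k),SU(k-2))$, $(Spin(4k+2),Spin(4k-1))$, or $(\mathbf{E}_6,\mathbf{F}_4)$, and then rules out the last two by computing the degrees in which rational homotopy is concentrated (the degrees $2n-1$ and $2n+1$ are consecutive odd integers, which fails for the $Spin$ family except when $k=1$, and $\pi_9\otimes\mathbb{Q}$ is nonzero for $\mathbf{E}_6/\mathbf{F}_4$). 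In the non-simple case it invokes the Hilton--Roitberg/Scheerer-type theorem that $V_2(\mathbb{C}^{n+1})$ is not a product when $\dim>15$, and then works through all six families in Onishchik's Theorem 5 using $\pi_2$, $\pi_4$, $\pi_7$, and dimension bounds. Your proposal gestures at ``a case analysis over the compact simple Lie algebras'' and ``invoking Onishchik's classification,'' but does not actually carry out either the reduction to one-or-two simple factors, the elimination of the $Spin$ and exceptional cases, or the product case at all; your mention of ruling out an $Sp$-family is also not one of the cases that actually arises in Onishchik's table for this rational type. To turn this into a proof you would need to reproduce roughly the second half of the paper's argument verbatim.
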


\begin{proof}We first note that $\pi_1(\mathcal{E}) = 0$ and $\pi_2(\mathcal{E})\cong \mathbb{Z}$, which follows easily from the long exact sequence in homotopy groups associated to the bundle $S^1\times U(n-1)\rightarrow U(n+1)\rightarrow \mathcal{E}$.

Suppose that $\mathcal{E}=\mathcal{E}_{\vec{p},\vec{q}}$ is homotopy equivalent to such a $G/H$.  Then $H^2(G/H)\cong \mathbb{Z}$.  Let $N$ denote the total space of a principal $S^1$-bundle over $G/H$ whose Euler class generates $H^2(G/H)$.  From \cite{PS}, the $G$-action on $G/H$ admits a commuting lift to $N$.  Then the $G\times S^1-$action on $N$ is clearly transitive.  But, in fact, the $G$-action on $N$ is transitive.  To see this, let $H'$ denote the isotropy group of the $G\times S^1$-action on $N$ at some point.   Then $H'$ acts on $G\times S^1$ by right multiplication by inverses and the quotient is $(G\times S^1)/H'\cong N$.  Since $N$ is simply connected, the projection of the $H'$ action to the $S^1$ factor of $G\times S^1$ must be transitive.  Let $H''$ denote the isotropy group of this action at the identity $1\in S^1$.  Then, from \cite[Lemma 1.3]{KZ}, we conclude that $N\cong (G\times S^1)/H' \cong G/H''$, so the $G$-action on $N$ is transitive.

Viewing $\mathcal{E}$ as an $S^1$-quotient of $U(n+1)/U(n-1)$ and applying Lemma \ref{lem:s1homotopy} to $\mathcal{E}$ and $G/H$, we deduce that $U(n+1)/U(n-1)$ is homotopy equivalent to $N= G/H''.$  In particular, $G/H''$ is at least $10$-connected.

From \cite[Theorem 4 pg. 268 and Theorem 5 pg. 260]{On}, we know that $G$ is either simple or $G = G_1\times G_2$ is a product of two simple groups.

Assume first that $G$ is simple.  We observe that the rational homotopy groups of $U(n+1)/U(n-1)$ are non-trivial except in degrees $2n-1$ and $2n+1$, where they are isomorphic to $\mathbb{Q}$.  Further, since the even rational homotopy groups of $U(n+1)/U(n-1)$ vanish, the same is true of $N$, so $H''$ has corank $2$ in $G$.  Since $G$ must have non-trivial rational homotopy groups in consecutive odd degrees and $\dim G/H''\geq 24$, \cite[Table 11, pg. 270]{On} implies that $(G,H'')$ is one of $(SU(k),SU(k-2))$, $(Spin(4k+2), Spin(4k-1))$, $(\mathbf{E}_6,\mathbf{F}_4)$.  In the latter case, we have $\pi_9(\mathbf{E}_6/\mathbf{F}_4)\otimes\mathbb{Q}\neq 0$ owing to the fact that $\pi_9(\mathbf{E}_6)$ is rationally non-trivial while $\pi_9(\mathbf{F}_4)$ is finite.  The space $Spin(4k+2)/Spin(4k-3)$ is a bundle over $Spin(4k+2)/Spin(4k+1)\cong S^{4k+1}$ with fiber $Spin(4k+1)/Spin(4k-1)$, which is diffeomorphic to the unit tangent bundle of $S^{4k}$.  Since the unit tangent bundle is rationally $S^{8k-1}$, we find that $Spin(4k+2)/Spin(4k-1)$ has the rational homotopy groups of $S^{8k-1}\times S^{4k+1}$.  These degrees are consecutive odd numbers if and only if $k=1$, when the dimension is $12 < 23$.  Thus, the assumption that $G$ is simple leads to the conclusion that $(G,H'') = (SU(k), SU(k-2)$ for some $k$.  Dimension counting implies $k = n+1$ and then that $(G,H) = (SU(n+1), SU(n-1)\times S^1)$ as claimed.

It remains to show that the case where $G = G_1\times G_2$ cannot arise.  So, assume it does.  Since $\dim \mathcal{E}\geq 23> 15$ , it follows from \cite[Theorem 3.6]{HS} that $U(n+1)/U(n-1)$ is not homotopy equivalent to a product, which implies that there must be a factor $H'''$ of (a cover of) $H''$ which projects non-trivially to both factors of $G$.  From inspection of all six cases of \cite[Theorem 5, pg. 269]{On}, $H'''$ is, up to cover, isomorphic to $S^1$, to $SU(2)$, or to $SU(2)\times SU(2)$.  From \cite[Lemma]{KZ}, we may assume the projection of this factor to both factors of $G$ is not surjective.

If $H'''$ is $S^1$, then the long exact sequence in homotopy groups implies $\pi_2(N)\neq 0$, which is a contradiction, so case 1 is ruled out.  The assumption that $\dim \mathcal{E}\geq 23$ rules out cases 5 and 6, where $H''' = SU(2)^2$, because in these cases, $G$ has dimension at most $28$ so $G/H$ has dimension at most $22$.

Cases 3 and 4 are ruled out because in these cases, $\pi_7(G/ H''')$ is infinite, coming from the second factor of $G$.

We finally handle case 2.  If both $G_1/H_1$ and $G_2/H_2$ have trivial $\pi_3$, then the long exact sequence in homotopy groups implies $G/H$ has non-trivial $\pi_4$, which is a contradiction.  But if $G_1/H_1$ is not $3$-connected, then $H_1$ is a torus or trivial.  If it is a torus, then $\pi_2(N)\neq 0$, which is a contradiction.  If it is trivial, the projection of $H'''$ to $G_1$ is surjective, which again is a contradiction.
\end{proof}

It follows from Proposition \ref{prop:inequivalent} that in order to show $\mathcal{E}_{p,q}$ is strongly inhomogeneous, it is sufficient to show $\mathcal{E}_{p,q}$ is not homotopy equivalent to any space of the form $\mathcal{E}_{0,q_1,q_2}$.

\begin{proposition}\label{prop:topology}  Assume $\ell:= p \frac{q_1^n - q_2^n}{q_1-q_2} - \frac{q_1^{n+1}-q_2^{n+1}}{q_1-q_2}$ is non-zero. Then the $ 4n-1$-dimensional space $\mathcal{E}_{p,q_1,q_2}$ has $H^{2n}(\mathcal{E}_{p,q_1,q_2})\cong \mathbb{Z}/\ell\mathbb{Z}$.

\end{proposition}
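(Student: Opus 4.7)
The plan is to compute the integer cohomology ring of $\mathcal{E}_{p,q_1,q_2}$ via the standard Borel-style presentation for biquotients (due to Eschenburg) and then read off the degree $2n$ piece. Since $G=U(n+1)$ and $H=U(n-1)\times S^1$ both have torsion-free integral cohomology concentrated in even degrees, that presentation yields
\[
H^*(\mathcal{E}_{p,q_1,q_2};\mathbb{Z}) \;\cong\; H^*(BH;\mathbb{Z})\big/\bigl(\rho_1^*(c_i) - \rho_2^*(c_i) : 1\leq i\leq n+1\bigr),
\]
where $c_i\in H^{2i}(BU(n+1);\mathbb{Z})$ are the universal Chern classes and $\rho_1,\rho_2 : BH\to BU(n+1)$ are induced by the two projections $H\to U(n+1)$ given by $(C,z)\mapsto \diag(z^p,1,\ldots,1)$ and $(C,z)\mapsto \diag(z^{q_1},z^{q_2},C)$.

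Write $H^*(BH;\mathbb{Z})=\mathbb{Z}[d_1,\ldots,d_{n-1},t]$ with $|d_i|=2i$ and $|t|=2$. Pulling back the total Chern class gives $\rho_1^*(1{+}c_1{+}\cdots{+}c_{n+1})=1+pt$ and
\[
\rho_2^*(1{+}c_1{+}\cdots{+}c_{n+1}) \;=\; (1+q_1 t)(1+q_2 t)(1+d_1+\cdots+d_{n-1}).
\]
Comparing coefficients, with the convention $d_0:=1$ and $d_j:=0$ for $j<0$ or $j>n-1$, the relation coming from $c_i$ reads
\[
d_i + (q_1+q_2)\,t\,d_{i-1} + q_1 q_2\,t^2\,d_{i-2} \;=\; \rho_1^*(c_i),
\]
which equals $pt$ for $i=1$ and vanishes for $2\leq i\leq n+1$.

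Set $\alpha_i := p\,\tfrac{q_1^i-q_2^i}{q_1-q_2} - \tfrac{q_1^{i+1}-q_2^{i+1}}{q_1-q_2}$, so that $\ell=\alpha_n$. The standard identity $q_1^j-q_2^j=(q_1+q_2)(q_1^{j-1}-q_2^{j-1})-q_1q_2(q_1^{j-2}-q_2^{j-2})$ produces the recurrence $\alpha_i=(q_1+q_2)\alpha_{i-1}-q_1q_2\,\alpha_{i-2}$. Combining this with the initial datum $d_1=\alpha_1 t$ and the recursion above, induction yields the closed form $d_i=(-1)^{i-1}\alpha_i\,t^i$ for $1\leq i\leq n-1$. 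Feeding this expression into the two still-unused relations---the ones indexed by $c_n$ and $c_{n+1}$, for which $d_n$ and $d_{n+1}$ are absent---converts them into
\[
(-1)^{n-1}\ell\,t^n=0 \qquad\text{and}\qquad (-1)^{n-1} q_1 q_2\,\alpha_{n-1}\,t^{n+1}=0.
\]

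After eliminating $d_1,\ldots,d_{n-1}$, the presentation simplifies to
\[
H^*(\mathcal{E}_{p,q_1,q_2};\mathbb{Z}) \;\cong\; \mathbb{Z}[t]\big/\bigl(\ell\,t^n,\; q_1 q_2\,\alpha_{n-1}\,t^{n+1}\bigr),
\]
so in particular the degree $2n$ piece is $\mathbb{Z}\cdot t^n/(\ell\,t^n)\cong \mathbb{Z}/\ell\mathbb{Z}$, as claimed; the hypothesis $\ell\neq 0$ is used precisely to make this cyclic group finite. No step presents a genuine obstacle: the substantive inputs are the Eschenburg presentation and the inductive identification $d_i=(-1)^{i-1}\alpha_i\,t^i$, which is straightforward bookkeeping once the recurrence for $\alpha_i$ is recorded.
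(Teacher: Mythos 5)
Your proof is correct in its conclusion and follows essentially the same computation as the paper's. Both are really the Serre spectral sequence of the fibration $U(n+1)\to\mathcal{E}\to BH$: the transgressions $dw_{2j-1}= Bh_1^*(\sigma_j)-Bh_2^*(\sigma_j)$ in the paper are exactly the relations $\rho_1^*(c_j)-\rho_2^*(c_j)$ you write down, and your inductive identification $d_i=(-1)^{i-1}\alpha_i\,t^i$ matches the paper's inductive computation of the classes $\sigma_j(z_i)$. The paper unwinds the spectral sequence by hand (tracking pages, vanishing of rows, and identifying the differential $dw_{2n-1}=\ell z^n$), whereas you invoke the Eschenburg/Borel presentation as a black box and then extract the degree-$2n$ piece. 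Two small caveats about that shortcut, neither of which affects the conclusion but worth noting. First, the hypothesis you cite should refer to $H^*(BG)$ and $H^*(BH)$, not to $H^*(G)$ and $H^*(H)$ --- the groups themselves have plenty of odd-degree cohomology. Second, $H^*(\mathcal{E})$ is \emph{not} literally $\mathbb{Z}[t]/(\ell t^n, q_1q_2\alpha_{n-1}t^{n+1})$: the biquotient is a closed, orientable $(4n-1)$-manifold and so has nonzero odd cohomology (e.g.\ in degree $4n-1$) coming from a surviving exterior generator of $H^*(U(n+1))$; the right-hand side is only the even-degree image of $H^*(BH)$. Since those surviving classes live in odd degrees they do not interfere with $H^{2n}$, so the identification $H^{2n}\cong\mathbb{Z}/\ell\mathbb{Z}$ stands, but the displayed ring isomorphism should be stated as a computation of the subring generated by $t$, or else justified more carefully as the paper does by controlling the relevant rows of the spectral sequence.
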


\begin{note}  If $q_1 = q_2=1$, the notation $\frac{q_1^n-q_2^n}{q_1-q_2}$ is properly interpreted as $\sum_{i=0}^{n-1} q_1^i q_2^{n-1-i} = n$.
\end{note}

\begin{proof} 

We will follow the standard procedure for computing the cohomology groups of homogeneous spaces and biquotients that was developed by Eschenburg \cite{Esc} and is described, e.g., in \cite[Section 2.3]{Dev3}.  To set up notation, for any Lie group $K$, we let $BK$ denote the classifying space of $K$.  Let $T_G$ and $T_H$ denote the standard maximal tori in $G = U(n+1)$ and $H = S^1\times U(n-1)$ respectively, so $T_G$ and $T_H$ consist of diagonal matrices.   We let $h=(h_1,h_2):H\rightarrow G\times G$ define the action yielding the generalized Eschenburg space, so $h_1(z,C) = \diag(z^p,1,...,1)$ and $h_2(z,C) = \diag(z^{q_1}, z^{q_2}, C)$.

We write $H^\ast(BT_G)\cong \mathbb{Z}[x_1,...,x_{n+1}]$ and $H^\ast(BT_H)\cong \mathbb{Z}[z, y_3,...,y_{n+1}]$, where $z$ corresponds to the circle factor of $H$ while the $y_i$ correspond to the diagonal matrices in $U(n-1)$.

Clearly, then, $h_1^\ast(x_1) = pz$ while $h_1^\ast(x_i) = 0$ for $i > 1$.  Similarly, $h_2^\ast(x_i) = q_i z$ for $i=1,2$ while $h_2^\ast(x_i) = y_i$ for $i\geq 3$.

We identify $H^\ast(BG\times BG)$ with the subalgebra $\mathbb{Z}[\sigma_j(x_i)\otimes 1, 1\otimes \sigma_j(x_i)]$ of $H^\ast(BT_G\times BT_G)\cong \mathbb{Z}[x_i\otimes 1, 1\otimes x_i]$, where $\sigma_i$ denotes the $i$th elementary symmetric polynomial.  We can similarly identify $H^\ast(BH)$ with $\mathbb{Z}[z,\sigma_j(y_i)]\subseteq H^\ast(BT_H)$.

Then we find that $Bh_1^\ast(\sigma_1(x_i)) = pz$ while $Bh_1^\ast(\sigma_j(x_i)) = 0$ for $j>1$.  We also compute that $Bh_2^\ast(\sigma_j(x_i)) = q_1q_2 z^2 \sigma_{j-2}(y_i) + (q_1+q_2)z \sigma_{j-1}(y_i) + \sigma_j(y_i)$, where we use the convention that $\sigma_j$ is identically zero for $j <0$.

Having computed $Bh^\ast$, we can compute the differentials in the Serre spectral sequence for the bundle $G\rightarrow G\bq H\rightarrow BH$ as follows.  One chooses elements $w_{2j-1}$ in the cohomology groups of $U(n+1)$ in such a way that $dw_{2j-1} = \sigma_{j}(x_i)$ in the spectral sequence for the universal principal $U(n+1)$-bundle $U(n+1)\rightarrow EU(n+1)\rightarrow BU(n+1)$.  Then, for the bundle $G\rightarrow G\bq H\rightarrow BH$, the $w_{2j-1}$ are  transgressive and one has $dw_{2j-1}= Bh_1^\ast(\sigma_j(x_i)) - Bh_2^\ast(\sigma_j(x_i))$ on the $2j$-th page.

For $2j-1< 2n-1$, the differential involves the non-zero term $\sigma_j(y_i)$, while for $k < j$, $dw_{2k-1}$ does not involve this term.  It follows that on the $E_{2n-1}$-th page (which is equal to the $E_{2n}$-th page), all entries between rows $1$  and $2n-2$ (inclusive) vanish.  In addition, the $2n$-th row is also trivial.  Moreover, $E_{2n-2}^{2j,0}\cong \mathbb{Z}$ and is generated by $z^j$.

The differential $dw_{2n-1}$ has the form $dw_{2n-1} = \ell z^j \in E_{2n}^{2n,0}$ for some $a\in \mathbb{Z}$.  We will shortly prove that $\ell = p \frac{q_1^n - q_2^n}{q_1-q_2} - \frac{q_1^{n+1}-q_2^{n+1}}{q_1-q_2}$.  Believing this momentarily, observe that if $\ell\neq 0$, it follows that $H^2(\mathcal{E}_{p,q})$ is finite cyclic of order $|\ell|$, establishing the proposition.

To determine $\ell$, we use the differentials $dw_1 = (p-(q_1+q_2)) z$ and $dw_{2j-1} = - ( q_1 q_2 \sigma_{j-2}(y_i) z^2 + (q_1+q_2)\sigma_{j-1}(y_i)z + \sigma_j(y_i)$ for $j>1$ and a straightforward induction to establishes the formula $\sigma_j(z_i) = (-1)^{j+1} \left( p \frac{q_1^j-q_2^j}{q_1- q_2} - \frac{q_1^{j+1} - q_2^{j+1}}{q_1-q_2}\right)\in H^{2j}(\mathcal{E}_{p,q})$.  Thus, $$\ell =  p \frac{q_1^n-q_2^n}{q_1- q_2} - \frac{q_1^{n+1} - q_2^{n+1}}{q_1-q_2},$$ as claimed.

\end{proof}

\begin{theorem}  Suppose $n\geq 6$.  Then there are infinitely many almost positively curved strongly inhomogeneous spaces of the form $\mathcal{E}^{4n-1}_{p,q_1,q_2}$.
\end{theorem}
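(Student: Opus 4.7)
My plan is to apply the infinite family of almost positively curved examples $\{\mathcal{E}^{4n-1}_{2,\,1-6k,\,3}\}_{k \geq 1}$ from the remark after Proposition~\ref{prop:newexamples}.  Since $4n-1 \geq 23$ when $n \geq 6$, Proposition~\ref{prop:inequivalent} together with the remark that follows it reduces strong inhomogeneity of any such $\mathcal{E}_{2, 1-6k, 3}$ to showing it is not homotopy equivalent to any space of the form $\mathcal{E}^{4n-1}_{0, u, v}$ with $(u, v)$ admissible.  By Proposition~\ref{prop:topology}, the top-degree cohomology $H^{2n}(\mathcal{E}) \cong \mathbb{Z}/|\ell|$ is cyclic, so a homotopy equivalence would force
\[
\left| \frac{(1-6k)^n(1+6k) + 3^n}{6k+2} \right| = \left| \frac{u^{n+1} - v^{n+1}}{u-v} \right|
\]
for some coprime pair $(u, v)$.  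The theorem therefore reduces to ruling out this Diophantine equality for infinitely many $k$.

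The key number-theoretic input is the elementary lemma: if a prime $p$ divides $\frac{u^{n+1} - v^{n+1}}{u - v}$ with $\gcd(u, v) = 1$, then either $p \mid n+1$ or $p \equiv 1 \pmod{d}$ for some divisor $d > 1$ of $n+1$.  (If $p \mid u - v$ the quotient reduces to $(n+1)u^n \pmod{p}$, forcing $p \mid n+1$; otherwise $u v^{-1}$ has nontrivial order in $(\mathbb{Z}/p\mathbb{Z})^\times$ dividing both $n+1$ and $p - 1$.)  Thus I would fix, for each $n$, a prime $p$ with $p \nmid n+1$ and $\gcd(p-1, n+1) = 1$ --- guaranteed by Dirichlet whenever $n+1$ has at least one odd prime factor --- and then show that $p \mid \ell_k$ for $k$ in some infinite arithmetic progression.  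The latter is a matter of analyzing the polynomial $f(m) := (3-m)^n(m-1) + 3^n$ modulo $p$ (where $m = 6k+2$) and confirming it possesses a nontrivial root compatible with $m \equiv 2 \pmod{6}$.  For example, when $n = 6$ one may take $p = 5$: a direct calculation shows $5 \mid \ell_k$ for every $k \equiv 0 \pmod{5}$, which yields infinitely many strongly inhomogeneous examples in dimension $23$.

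The main obstacle I expect is the uniform treatment when $n+1$ is a power of $2$ (i.e.\ $n \in \{7, 15, 31, \ldots\}$): for these $n$ every odd prime $p$ has $\gcd(p-1, n+1) \geq 2$, so the forbidden-prime strategy collapses.  For such $n$ I would switch to a different subfamily from Proposition~\ref{prop:newexamples} --- for instance $(2, 1-6k, 5)$, admissible for $k \not\equiv 1, 4 \pmod{5}$ --- whose top-cohomology orders have a different prime structure and may admit a forbidden prime divisor after all; or, failing that, supplement the cohomology-order argument with more refined homotopy invariants such as the linking form on $H^{2n}(\mathcal{E})$ or rational Pontryagin classes, each of which must be preserved under any hypothetical homotopy equivalence with a homogeneous space and imposes further restrictions.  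In every case, the result is an infinite subfamily of pairwise non-homotopy-equivalent strongly inhomogeneous almost positively curved $\mathcal{E}^{4n-1}_{p, q_1, q_2}$, completing the proof.
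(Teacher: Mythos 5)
Your reduction is correct: Proposition~\ref{prop:inequivalent} plus Proposition~\ref{prop:topology} does reduce strong inhomogeneity to a Diophantine statement about the orders $|\ell|$ of $H^{2n}$, and your elementary lemma---that a prime $p$ dividing $\frac{u^{n+1}-v^{n+1}}{u-v}$ with $\gcd(u,v)=1$ must satisfy $p\mid n+1$ or $p\equiv 1\pmod d$ for some $d>1$ dividing $n+1$---is valid. However, your analysis of when a forbidden prime exists is wrong in a way that leaves half the cases uncovered. The two conditions $p\nmid n+1$ and $\gcd(p-1,n+1)=1$ can be simultaneously satisfied only when $n+1$ is odd, i.e.\ when $n$ is even: for any odd $n$, $n+1$ is even, so $p=2$ fails the first condition and every odd prime $p$ has $\gcd(p-1,n+1)\geq 2$, failing the second. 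Your claim that this works ``whenever $n+1$ has at least one odd prime factor'' is false---take $n=9$ with $n+1=10=2\cdot 5$; no prime works. So the obstruction is not limited to $n+1$ a power of two; it occurs for every odd $n\geq 7$, and your proposed remedies (a different subfamily, linking forms, Pontryagin classes) are sketched but not executed. This is a genuine gap covering infinitely many dimensions.

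The paper's proof also splits on the parity of $n$ but uses a genuinely different argument for $n$ odd, which is precisely where your approach breaks down. For $n$ odd one has the factorization $|\ell| = |q_1+q_2|\sum_{i=0}^{(n-1)/2} (q_1^{(n-1)/2-i}q_2^i)^2$ for the homogeneous spaces $\mathcal{E}_{0,q_1,q_2}$, and a short mod-$4$ computation shows $|\ell|$ can never be a prime congruent to $3$ modulo $4$. The paper then takes the very clean family $\mathcal{E}_{p,1,1}$, for which $|\ell| = (p-1)n-1$, and invokes Dirichlet on the progression $4kn - 1$ to produce infinitely many $p$ giving a prime $\equiv 3\pmod 4$. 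Note also that this argument does not try to forbid a prime \emph{divisor} of $|\ell|$ (which is impossible for $n$ odd, as your lemma shows); it forbids a certain \emph{value} of $|\ell|$. For $n$ even the paper similarly uses a parity argument ($|\ell|$ is always odd for homogeneous spaces) paired with a family where $|\ell|$ is even, again avoiding the prime-by-prime casework your choice of family $\mathcal{E}_{2,1-6k,3}$ would require (you verify the divisibility claim only for $n=6$, $p=5$). In short, your framework is right and your number-theoretic lemma is correct, but the forbidden-prime strategy is inherently limited to $n$ even, and you would need to import something like the paper's sum-of-squares/mod-$4$ observation to finish the odd case.
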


\begin{proof}  We will show that for each $n\geq 6$, there are infinitely many choices of $p$ for which the generalized Eschenburg space $\mathcal{E}_{p,0,1}$ is strongly inhomogeneous.  From Proposition \ref{prop:inequivalent}, in order to prove that $\mathcal{E}_{p,q_1,q_2}$ is strongly inhomogeneous, it is sufficient to show it is not homotopy equivalent to any generalized Eschenburg space $\mathcal{E}_{0,r_1,r_2}$.

We begin with the case where $n$ is even.  In this case, Proposition \ref{prop:topology} implies that we observe that for any homogeneous generalized Eschenburg space $\mathcal{E}_{0,r_1,r_2}$, the order of $H^{2n}$ is given by $\ell:=-\frac{r_1^{n+1}-r_2^{n+1}}{r_1-r_2}$.  We claim this is odd. To determine the parity of $\ell$, we consider the possible parities of $r_1$ and $r_2$.  If both $r_1$ and $r_2$ are odd, then $\ell$ is a sum of $n+1$ odd numbers.  As $n$ is even, we find $\ell$ is odd in this case.  Next, observe that because $r_1$ and $r_2$ are relatively prime, at most one is even.  Without loss of generality, we may assume that $r_1$ is even and $r_2$ is odd.  Then all terms in the sum defining $\ell$ are even, except for $r_2^n$, which is odd.  Thus $\ell$ is odd in this case as well.

Thus, to conclude the case where $n$ is even, we will find almost positively curved examples for which $H^{2n}$ has even order.  We consider $\mathcal{E}_{p,q_1,q_2}$ with $(p,q_1,q_2) = (2s+1,1,2s-1)$.  These are admissible for every $s$ because $2s$ and $2s-1$ are relatively prime.  From Theorem \ref{thm:main}, they admit almost positive curvature whenever $s\geq 0$.  Moreover, from Proposition \ref{prop:topology}, we see that $H^{2n}(\mathcal{E}_{p,q_1,q_2})$ has order $$(2s+1) \frac{(2s)^n - 1}{2s-1} - \frac{(2s)^{n+1}-1}{2s-1}.$$  This is even for any $s$, and thus each such $\mathcal{E}_{p,q_1,q_2}$  is strongly inhomogeneous.

\bigskip

We next turn to the case where $n$ is odd.  We first claim that $|\ell|$ cannot be equal to a prime that is congruent to $3$ mod $4$.

Because $n$ is odd, $|\ell|$ factors as $|q_1+q_2|\sum_{i=0}^{(n-1)/2} q_1^{n-1-2i}q_2^{2i}$.  Observe that $q_1^{n-1-2i}q_2^{2i} = (q_1^{\frac{n-1}{2} - i} q_2^i)^2$, so the second factor is a sum of squares.  Assuming $|\ell|$ is prime, any solution to $|\ell| = |q_1+q_2|\sum_{i=0}^{(n-1)/2} q_1^{n-1-2i}q_2^{2i}$ must have either $|q_1+q_2|=|\ell|$ and $\sum_{i=0}^{(n-1)/2} q_1^{n-1-2i}q_2^{2i}=1$, or  $|q_1+q_2|=1$ and $\sum_{i=0}^{(n-1)/2} q_1^{n-1-2i}q_2^{2i} = |\ell|$.

The first case can never occur: if $\sum_{i=0}^{(n-1)/2} q_1^{n-1-2i}q_2^{2i}=1$, then because it is a sum of non-negative terms we must have that one of $q_1$ and $q_2$ is zero while the other is $\pm 1$.  This, then, implies that $|\ell|=|q_1+q_2| = 1$, contradicting the fact that $|\ell|$ is prime.

For the second case, the equation $|q_1+q_2|=1$ implies $q_1$ and $q_2$ have opposite parities.  We assume without loss of generality that $q_2$ is even and $q_1$ is odd.  Then $q_2^i$ is divisible by $4$ for all $i\geq 2$.  Thus, computing $\sum_{i=0}^{(n-1)/2} q_1^{n-1-2i}q_2^{2i}\pmod{4}$, only the $i=0$ term remains.  That is, $\sum_{i=0}^{(n-1)/2} q_1^{n-1-2i}q_2^{2i}\equiv q_1^{n-1}\pmod{4}$.  Since $n-1$ is even and $q_1$ is odd, we find that $q_1^{n-1}\equiv 1\pmod{4}$.  Thus, this case cannot occur if $|\ell| \equiv 3\pmod{4}$.

It remains to find infinitely many almost positively curved $\mathcal{E}_{p,q}$ for which the order of $H^{2n}$ is a prime congruent to $3\pmod{4}$.  For these, we consider $\mathcal{E}_{p,q_1,q_2}$ with $(p,q_1,q_2) =(p,1,1)$ with $p\geq 2$.  These triples are admissible for any integer $p$, and they have almost positive curvature for any $p \geq 2$ by Theorem \ref{thm:main}.  By Proposition  \ref{prop:topology}, the order of $H^{2n}$ is $|pn - (n+1)| = (p-1)n - 1$.  By Dirichlet's theorem, since $\gcd(-1,4n) = 1$, there are infinitely many primes of the form $4kn-1$ with $k\in \mathbb{Z}$.  For each such $k$, letting $p = 4k+1$, it follows that $\mathcal{E}_{p,1,1}$ is strongly inhomogeneous.
\end{proof}

\bibliography{sources}

@article{DeV2,
title={Three new almost postively curved manifolds},
author={J. DeVito},
journal={Geom. Dedicata},
year=2021,
pages={281--298},
volume=212
}

@article{DeV3,
title={The classification of compact simply connected biquotients in dimension 6 and 7},
author={J. DeVito},
journal ={Math. Ann.},
year=2017,
volume=368,
number={3-4},
pages={1493--1541}
}

@article{DJ,
title={Curvature on {E}schenburg Spaces},
author={J. DeVito and P. Johnson},
journal={Transform. Groups},
year=2024,
\url={https://doi.org/10.1007/s00031-024-09884-x}
}

@article{Eschenburg,
    title = {New examples of manifolds with strictly positive curvature},
    author = {J.-H. Eschenburg},
    journal = {Invent. Math.},
    year = {1982},
    pages = {469--480},
    volume = {66},
    number = {},
    url = {}
}

@article{Esc,
author={J.-H. Eschenburg},
title={Cohomology of biquotients},
journal={Manuscripta Math.},
volume=75,
pages={151-–166},
year=1992
}

@article{EK,
title={Almost positive curvature on the {G}romoll-{M}eyer 7-sphere},
author={J.-H. Eschenburg and M. Kerin.},
journal={Proc. Amer. Math. Soc.},
volume={136},
pages={3263-–3270},
year=2008}

@article{Gr,
author={A. Gray},
title={Pseudo-Riemannian almost product manifolds and submersions},
journal={J. Math. Mech.},
year=1967,
volume=16,
pages={715--737}
}

@article{GZ,
author={K.Grove and W.Ziller},
title={Curvature and symmetry of {M}ilnor spheres},
journal={Ann. of Math.},
volume=152,
number=1,
pages={331-367},
year=2000
}

@article{HS,
title={On the Classification of Transitive Effective Actions on {S}tiefel Manifolds},
author={W.-Y. Hsiang and J. C. Su},
journal = {Trans. Amer. Math. Soc.},
Volume={130},
Number={2},
pages={322--336},
year=1968
}

@article{KZ,
author={V.Kapovitch and W.Ziller},
title={Biquotients with singly generated rational cohomology}, journal={Geom. Dedicata},
volume=104,
year=2004,
pages={149--160}
}

@article{longkerinpaper,
    title = {Some new examples with almost positive curvature},
    author = {Martin Kerin},
    journal = {Geometry and Topology},
    year = {2011},
    pages = {1001-1043},
    volume = {15},
    number = {1},
    url = {http://dx.doi.org/10.2140/gt.2011.15.217}
}

@article{shortkerinpaper,
    title = {On the curvature of biquotients},
    author = {Martin Kerin},
    journal = {Mathematische Annalen},
    year = {2011},
    pages = {155-178},
    volume = {352},
    number = {},
    url = {https://doi.org/10.1007/s00208-011-0634-7}
}

@book{On,
author={A. Onishchik},
title={The topology of trasnformation groups},
year = 1997,
publisher={Huthig Pub Ltd}
}

@article{On1,
author={B. O’Neill},
title={The fundamental equations of a submersion},
journal={Michigan Math. J.},
year=1966,
volume=13,
pages={458--469}
}

@article{PS,
author={R. S. Palais and T. E. Stewart},
title={The cohomology of differentiable transformation groups}, journal={Amer. J.Math.},
volume=83,
year=1961,
pages={623–-644}
}

@article{Sh,
title={Strong inhomogeneity of Eschenburg spaces},
author={K. Shankar},
journal={Michigan Math. J.},
volume=50,
number=1,
pages={125--142},
year=2002
}

@article{Ta,
    title={Quasi-positive curvature on homogeneous bundles},
    author ={K. Tapp},
    journal ={J. Differential Geom.},
    year=2003,
    volume=65,
    number=2,
    pages={273--287}
}

@article{Ta2,
title={Flats in {R}iemannian Submersions from {L}ie Groups},
author={K. Tapp},
journal={Asian J. Math.},
volume=13,
number =4,
pages={459--464},
year = 2009
}

@article{Wi,
    title={Manifolds with positive sectional curvature almost everywhere},
    author={B. Wilking},
    journal={Invent. Math.},
    year = 2002,
    pages={117--141},
    volume=148
}

@article{Wu,
title={Cohomogeneity one manifolds with quasipositive curvature},
author={D. Wulle},
journal={Math. Ann.},
year=2023,
volume=390,
pages={303--350}
}
\bibliographystyle{plain}

\end{document}